\documentclass[a4paper,11pt]{article}
\usepackage[latin1]{inputenc}
\usepackage[english]{babel}
\usepackage{amsmath}
\usepackage{amsfonts}
\usepackage{amssymb}
\usepackage{epsfig}
\usepackage{amsopn}
\usepackage{amsthm}
\usepackage{color}
\usepackage{graphicx}
\usepackage{subfigure}
\usepackage{enumerate}
\setlength{\oddsidemargin}{0.25in} \addtolength{\hoffset}{0cm}
\addtolength{\textwidth}{2.5cm} \addtolength{\voffset}{-1cm}
\addtolength{\textheight}{1cm}
\newtheorem{theorem}{Theorem}[section]

\newtheorem*{theorem*}{Theorem}
\newtheorem*{lemma*}{Lemma}
\newtheorem*{remark*}{Remark}
\newtheorem*{definition*}{Definition}
\newtheorem*{proposition*}{Proposition}
\newtheorem*{corollary*}{Corollary}
\numberwithin{equation}{section}
%

\newcommand{\real}{\mathbb{R}}



\let\ced=\c         







\def\qed{\,\unskip\kern 6pt \penalty 500
\raise -2pt\hbox{\vrule \vbox to8pt{\hrule width 6pt
\vfill\hrule}\vrule}\par}
\definecolor{darkblue}{rgb}{0.05, .05, .65}
\definecolor{darkgreen}{rgb}{0.1, .65, .1}
\definecolor{darkred}{rgb}{0.8,0,0}
\newcommand{\beqn}{\begin{equation}}
\newcommand{\eeqn}{\end{equation}}
\newcommand{\bear}{\begin{eqnarray}}
\newcommand{\eear}{\end{eqnarray}}
\newcommand{\bean}{\begin{eqnarray*}}
\newcommand{\eean}{\end{eqnarray*}}
%


\begin{document}

\title{\huge \bf Radial equivalence and applications to the qualitative theory for a class of non-homogeneous reaction-diffusion equations}

\author{
\Large Razvan Gabriel Iagar\,\footnote{Departamento de Matem\'{a}tica
Aplicada, Ciencia e Ingenieria de los Materiales y Tecnologia
Electr\'onica, Universidad Rey Juan Carlos, M\'{o}stoles,
28933, Madrid, Spain, \textit{e-mail:} razvan.iagar@urjc.es}
\\[4pt] \Large Ariel S\'{a}nchez\,\footnote{Departamento de Matem\'{a}tica
Aplicada, Ciencia e Ingenieria de los Materiales y Tecnologia
Electr\'onica, Universidad Rey Juan Carlos, M\'{o}stoles,
28933, Madrid, Spain, \textit{e-mail:} ariel.sanchez@urjc.es}\\
[4pt] }
\date{}
\maketitle

\begin{abstract}
Some transformations acting on radially symmetric solutions to the following class of non-homogeneous reaction-diffusion equations
$$
|x|^{\sigma_1}\partial_tu=\Delta u^m+|x|^{\sigma_2}u^p, \qquad (x,t)\in\real^N\times(0,\infty),
$$
which has been proposed in a number of previous mathematical works as well as in several physical models, are introduced. We consider here $m\geq1$, $p\geq1$, $N\geq1$ and $\sigma_1$, $\sigma_2$ real exponents. We apply these transformations in connection to previous results on the one hand to deduce general qualitative properties of radially symmetric solutions and on the other hand to construct self-similar solutions which are expected to be patterns for the dynamics of the equations, strongly improving the existing theory. We also introduce mappings between solutions which work in the semilinear case $m=1$.
\end{abstract}

\

\noindent {\bf AMS Subject Classification 2010:} 35B33, 35B44, 35C06, 35K57, 35Q35, 35Q79.

\smallskip

\noindent {\bf Keywords and phrases:} reaction-diffusion equations, non-homogeneous porous medium, self-similar solutions, Hardy-H\'enon equations, radially symmetric solutions, weighted reaction.

\section{Introduction}

The aim of this work is to construct a family of transformations acting on the class of radially symmetric solutions to non-homogeneous parabolic equations in the following general form
\begin{equation}\label{eq1.gen}
|x|^{\sigma_1}\partial_tu=\Delta u^m+|x|^{\sigma_2}u^p,
\end{equation}
posed for $(x,t)\in\real^N\times(0,\infty)$ and with exponents as follows
\begin{equation}\label{range.exp}
m\geq1, \qquad p\geq1, \qquad \sigma_1, \ \sigma_2\in\real.
\end{equation}
This equation involves a competition between the effects of the diffusion term (which conserves and spreads the total mass of a solution) and of the reaction term (which tends to increase the mass of the solution), leading to an important discussion with respect to the finite time blow-up. Moreover, the equation is strongly non-homogeneous, presenting two different weights depending on the space variable, a fact that makes its qualitative study more complicated but also much more interesting. In particular, a new competition, between the influence of ``inner" sets with $|x|$ small and the one of the ``outer" sets where $|x|$ is sufficiently large, adds up to the previous one to modify the geometry and the dynamics of the solutions.

\medskip

\noindent \textbf{Physical models}. Several particular cases of Eq. \eqref{eq1.gen} have been proposed in a number of physical models coming both from fluid dynamics as well as from the heat transfer or combustion. More precisely

$\bullet$ the semilinear case of Eq. \eqref{eq1.gen} with exponents $m=1$, $\sigma_2=0$ and $\sigma_1=q>0$ has been proposed as an approximate model for the flow in a channel of a fluid whose viscosity depends on the temperature $u$ of the fluid. The deduction of a more complex and realistic model stems from Ockendon's works \cite{Ock77, Ock79}, but as it is shown in \cite{La84} and then in \cite{Floater91}, Eq. \eqref{eq1.gen} with the above mentioned exponents is a reasonable approximation of the initial model that can be also handled both by analytical and numerical methods. In fact, Stuart and Floater developed in \cite{SF90} a numerical scheme to compute the formation of singularities (blow-up of the derivative and blow-up of the solutions) in this equation, giving numerical evidence that if $p>2$ blow-up might occur at interior points of the channel. Later, Floater \cite{Floater91} and Chan and Kong \cite{CK97} performed a deeper analytical study of both blow-up and quenching of solutions in this specific model.

$\bullet$ the general case of Eq. \eqref{eq1.gen} with $\sigma_1=\sigma_2\in[-2,0)$ is the equation satisfied by the temperature in a model of combustion in a medium with a heat source and whose thermal conductivity depends on the temperature, as proposed by Kurdyumov, Kurkina and their collaborators, see \cite{KKMS80, KK04, Ku05a, Ku05b} and references therein. In this model, self-similar solutions to Eq. \eqref{eq1.gen} represent the thermal structures that may exist in the nonlinear medium, whose understanding and classification is the aim of the study. Some of the self-similar solutions to this specific model are deduced in the above quoted works, but we will give a simpler and more general approach to study this particular case when $\sigma_1=\sigma_2$ by mapping it onto a well-studied, classical equation.

$\bullet$ the more general non-homogeneous porous medium equation with reaction in dimension $N\geq1$, that is,
\begin{equation}\label{non.PME}
\varrho(x)\partial_tu=\Delta u^m+A(x)u^p, 
\end{equation}
with general weights which are supposed to behave like pure powers as $|x|\to\infty$ is a well-known model for the propagation of thermal waves in a non-homogeneous medium, deduced by Kamin and Rosenau in \cite{KR81, KR82}. This model became of great mathematical interest more recently, see for example \cite{RV06, KRV10, IS14, IS16} and references therein for Eq. \eqref{non.PME} without a source. In particular, it has been noticed that, if $\varrho(x)=|x|^{\sigma_1}$, then $\sigma_1=-2$ is a critical exponent. A mathematical study of the case with source, but only with $m=1$ and $A(x)$ constant function, has been performed in \cite{dPRS13}.

\medskip

\noindent \textbf{Mathematical precedents}. The mathematical analysis of Eq. \eqref{eq1.gen} started, as expected, with the simplest case, that is, when $\sigma_1=\sigma_2=0$, where we are left with the classical reaction-diffusion equation
\begin{equation}\label{eq1.hom}
\partial_tu=\Delta u^m+u^p,
\end{equation}
which is now quite well-understood, see for example the monographs \cite{QS} for the case $m=1$ and \cite{S4} for the case $m>1$. In particular, the study focused on the phenomenon of finite time blow-up, in connection with both the range of the exponent $p>1$ and the decay at infinity of the initial condition $u_0$ triggering the evolution. Thus, a very important exponent is the \emph{Fujita-type exponent} $p_F=m+2/N$, limiting between the range $1<p<p_F$ of finite time blow up for any non-trivial initial condition $u_0\geq0$ and the range $p>p_F$ where global solutions may exist if the initial condition decays very fast as $|x|\to\infty$. These results have been generalized afterwards to the weighted reaction
\begin{equation}\label{eq1.wei}
\partial_tu=\Delta u^m+|x|^{\sigma}u^p,
\end{equation}
especially in the case $p>m$ where some techniques can be directly translated from the homogeneous case. After the works by Pinsky \cite{Pi97, Pi98} in the semilinear case $m=1$, it was Qi \cite{Qi98} and then Suzuki \cite{Su02} who established both the Fujita-type exponent and the second critical exponent (measuring the optimal decay rate of $u_0(x)$ as $|x|\to\infty$ when $p>p_F$ in order to have global solutions) for \eqref{eq1.wei} when $m>1$, while Andreucci and Tedeev \cite{AT05} established the blow-up rate with some limitations on the exponent $\sigma$. A series of works by Guo, Shimojo, Souplet et al. \cite{GLS, GS11, GLS13, GS18} addressed the question of the blow-up set of solutions to \eqref{eq1.wei} in the semilinear case $m=1$. In a different research direction, works such as \cite{FT00} and \cite{MS21} studied the way finite time blow-up occurs, again in the semilinear case. In these latter papers, some critical exponents are established (analogous to similar ones derived earlier for \eqref{eq1.hom}, see for example \cite[Chapter 4]{S4}) and it is shown that, when $p>1$ is not very large, solutions near the blow-up time converge to self-similar patterns, while for $p$ large there is no fixed blow-up rate and the phenomenon is more complex.

It has been then noticed for Eq. \eqref{eq1.wei} that its analysis might be performed with some analogies not only for $\sigma>0$ but going down to $\sigma\geq-2$. This case of singular potential is sometimes known as the \emph{Hardy equation} and its modern research had as starting point the classical work by Baras and Goldstein \cite{BG84} where the linear case of Eq. \eqref{eq1.wei} with $p=m=1$ and $\sigma=-2$ has been studied. The results therein limiting between existence and non-existence of solutions have been then extended to more general weights in \cite{CM99} and to the fast diffusion case $m<1$ in works such as \cite{GK03, GGK05, Ko04}. More recently, the Hardy equation has been strongly studied in the semilinear case $m=1$, see for example \cite{BSTW17, BS19, CIT21a, CIT21b, T20, HT21}. We stress here that the results in the papers \cite{Qi98, FT00, Su02, MS21} mentioned in the previous paragraph are also proved for $\sigma>-2$, but without including the limit case of equality.

Going back to our Eq. \eqref{eq1.gen}, Wang and Zheng \cite{WZ06} established the Fujita-type exponent under the following restrictions
\begin{equation}\label{Fujita}
p_F(\sigma_1,\sigma_2)=m+\frac{2+\sigma_2}{N+\sigma_1}, \qquad {\rm if} \ 0\leq\sigma_1\leq\sigma_2<p(\sigma_1+1)-1, \qquad p>m\geq1.
\end{equation}
Later on, the second critical exponent for \eqref{eq1.gen} has been derived in \cite{LD14, ZM14} (the latter extending the result up to the doubly nonlinear equation), namely
\begin{equation}\label{secondary}
\mu(\sigma_1,\sigma_2)=\frac{\sigma_2+2}{p-m}, \qquad {\rm provided} \ 0\leq\sigma_1\leq\sigma_2<\frac{p-m}{m-1}N, \qquad p>m>1.
\end{equation}
Let us recall here that by second critical exponent we understand the optimal exponent $\mu(\sigma_1,\sigma_2)>0$ such that, if $u_0(x)\sim C|x|^{-\mu}$ as $|x|\to\infty$ with $0<\mu<\mu(\sigma_1,\sigma_2)$, then any solution to \eqref{eq1.gen} with initial condition $u_0(x)$ blows up in finite time, while if $\mu>\mu(\sigma_1,\sigma_2)$, there exist global in time solutions. A different technical approach on Eq. \eqref{eq1.gen} and generalized also to the doubly nonlinear diffusion has been considered by Martynenko, Tedeev and their collaborators in a series of works \cite{MT07, MT08, MTS12} both for the case $\sigma_1=\sigma_2=-l$ with $l>0$ or with $\sigma_1=-l<0$ and $\sigma_2=0$. The authors of these works established smallness restrictions on some weighted integrals of the initial condition $u_0$ in order for global solutions to exist. Similar smallness conditions but obtained via comparison principles instead of weighted integral estimates were deduced by Meglioli and Punzo \cite{MP20, MP21} for a more general (and regular) unique weight $\varrho(x)$ replacing both $|x|^{\sigma_1}$ and $|x|^{\sigma_2}$ in Eq. \eqref{eq1.gen}.

\medskip

\noindent \textbf{Self-similar solutions}. A very important class of solutions to Eq. \eqref{eq1.gen} is formed by the radially symmetric self-similar solutions, which are solutions in one of the following three forms
\begin{subequations}\label{SSS}
\begin{equation}\label{forward}
u(x,t)=t^{\alpha}f(|x|t^{-\beta}), \qquad (x,t)\in\real^N\times(0,\infty),
\end{equation}
\begin{equation}\label{backward}
u(x,t)=(T-t)^{-\alpha}f(|x|(T-t)^{\beta}), \qquad (x,t)\in\real^N\times(0,T), \qquad T>0,
\end{equation}
\begin{equation}\label{exponential}
u(x,t)=e^{\alpha t}f(|x|e^{-\beta t}), \qquad (x,t)\in\real^N\times(-\infty,\infty),
\end{equation}
\end{subequations}
which are called respectively \emph{forward}, \emph{backward} and \emph{exponential} self-similar solutions. Notice that forward self-similar solutions are global in time, while backward self-similar solutions present a finite time blow-up at $t=T$. The exponential self-similar solutions are quite rare but interesting, appearing for specific, critical exponents and being solutions that can be in fact defined also backward in time, for any $t\in\real$; this is why they are also known in literature as \emph{eternal solutions}.

It has been noticed that self-similar solutions are frequently the prototype of a general solution to a nonlinear diffusion equation, in the sense that they encode many functional properties shared by general solution, and they also usually represent the pattern to which solutions tend as $t\to\infty$ or $t\to T$ in the blow-up case. Moreover, physical models also consider them as equilibrium states, thus performing an analysis of them it is a very important question when trying to understand the dynamics and the geometry of the solutions of a nonlinear diffusion equation. As an example related to Eq. \eqref{eq1.gen}, papers establishing the physical model from combustion such as \cite{KK04, Ku05a, Ku05b} also work on the self-similar solutions to the equation deduced from the model.

More recently, the authors developed a larger project of understanding the dynamics of Eq. \eqref{eq1.wei} starting from the classification of its self-similar solutions. Focusing at first on the range of exponents $1<p<m$, we have shown that both the occurence of the finite time blow-up, its sets and rates and the geometric form and evolution of the self-similar profiles depends strongly on the magnitude of $\sigma\geq-2$, see for example the results in \cite{ILS22, IMS21, IMS22, IS19, IS21a, IS23} and references therein. The very interesting case $p=m$ is critical and Eq. \eqref{eq1.wei} has been considered with $p=m$ in \cite{IS20, IS22}. As an outcome of all these developments, it has been proved at the level of self-similar solutions that their form is strongly influenced in the case of Eq. \eqref{eq1.wei} by the sign of the following constant:
\begin{equation}\label{const.L}
L:=\sigma(m-1)+2(p-1).
\end{equation}
Indeed, when $L>0$, backward self-similar solutions as in \eqref{backward} have been constructed, while if $L<0$, self-similar solutions are in forward form \eqref{forward}. Eternal, exponential self-similar solutions have been constructed in the special case $L=0$ in \cite{IS22a, IS22b, ILS22b}. We end up this presentation by introducing here a constant related to $L$ which will be very useful in the study of self-similar solutions to Eq. \eqref{eq1.gen}
\begin{equation}\label{const.L2}
L(\sigma_1,\sigma_2):=\sigma_2(m-1)+2(p-1)-\sigma_1(m-p).
\end{equation}

\medskip

\noindent \textbf{Brief description of our results}. In the present paper, we extend our analysis to the more general Eq. \eqref{eq1.gen} by means of a number of \textbf{transformations} mapping radially symmetric solutions to Eq. \eqref{eq1.gen} onto radially symmetric solutions to Eq. \eqref{eq1.wei} and in some cases, also onto the homogeneous equation \eqref{eq1.hom}. These mappings will be described in detail in \textbf{Section \ref{sec.transf}}, which is split into two parts: Subsection \ref{subsec.main} is devoted to the description of the transformation we use mostly in the sequel, while in Subsection \ref{subsec.other} three different transformations that we also consider of interest are introduced. We stress here that among the latter ones, mappings onto solutions to one-dimensional Fisher-KPP type equations are very useful in critical cases.

The rest of the paper is devoted to \textbf{a number of applications} of these transformations, having as starting point the already acquired knowledge about more particular equations such as \eqref{eq1.wei} or \eqref{eq1.hom}. More specifically, we identify in \textbf{Section \ref{sec.Fujita}} the Fujita-type exponent and then also the second critical exponent for Eq. \eqref{eq1.gen}, showing thus that, at least in the case of radially symmetric solutions, the technical restrictions on $\sigma_1$ and $\sigma_2$ given in \eqref{Fujita} and \eqref{secondary} can be removed. The forthcoming two chapters are dedicated to the classification of self-similar solutions (in any of the three forms \eqref{forward}, \eqref{backward} and \eqref{exponential}) to Eq. \eqref{eq1.gen}, depending on the relation between $p$ and $m$ but also on $\sigma_1$ and $\sigma_2$. Thus, \textbf{Section \ref{sec.equal}} classifies backward self-similar solutions for the limiting case $p=m$, presenting finite time blow-up, showing that they exist only if some inequality between $\sigma_1$ and $\sigma_2$ is satisfied. The longer \textbf{Section \ref{sec.lower}} is then devoted to the interesting range $1\leq p<m$, in which, depending on some relations between the exponents of Eq. \eqref{eq1.gen}, all three types of self-similar solutions may exist. In particular, it is shown that their dynamics depend strongly on the sign of the constant $L(\sigma_1,\sigma_2)$ given in \eqref{const.L2}, and their local behavior as $|x|\to0$ is also classified. In the same line of analysis, in the range $p>m$ a number of critical exponents are introduced in \textbf{Section \ref{sec.sigmaequal}} and their relevance for finite time blow-up of the solutions is exposed in the special case (of interest also in applications) when $\sigma_1=\sigma_2$. 

A specific \textbf{Section \ref{sec.heat}} is devoted to the semilinear case $m=1$, where there are some models of physical interest (as explained in the previous paragraphs) and where more precise results with respect to the range $m>1$ can be established. Both general well-posedness (at least in the framework of radially symmetric solutions) and large time behavior results for general solutions will be derived with the aid of our mappings from existing results on Hardy-H\'enon equations. Finally, \textbf{Section \ref{sec.crit}} deals with the critical exponent $\sigma_1=-2$, where the transformations we employ are totally different from the ones working in the range $\sigma_1>-2$. We are now in a position to state and explain our main results.

\section{The transformations}\label{sec.transf}

From now one, we deal with radially symmetric solutions $u(r,t)$, $r=|x|$, to Eq. \eqref{eq1.gen}. Such solutions satisfy the following simplified equation
\begin{equation}\label{eq1.rad}
r^{\sigma_1}\partial_tu=(u^m)_{rr}+\frac{N-1}{r}(u^m)_r+r^{\sigma_2}u^p,
\end{equation}
which is the radially symmetric version of Eq. \eqref{eq1.gen}. Let us make here the general convention that, since we only deal with radially symmetric solutions to Eq. \eqref{eq1.gen}, we can assume (for the continuity of the transformations) that the dimension $N$ is any real number, as it becomes just a parameter in Eq. \eqref{eq1.rad}.

\subsection{The main transformation}\label{subsec.main}

Proceeding as in \cite{IRS13}, we look for a change of variable of the general form
\begin{equation}\label{change}
u(r,t)=r^{\delta}w(z,\tau), \qquad z=r^{\theta}, \qquad \tau=Ct
\end{equation}
where $\delta$, $\theta$ are exponents to be determined and $C>0$. A straightforward calculation which is analogous to the one performed in \cite[Section 2.1]{IRS13} gives that $w(z,\tau)$ solves the following transformed equation
\begin{equation}\label{interm1}
\begin{split}
Cw_{\tau}&=\theta^2z^{[(m-1)\delta+2\theta-\sigma_1-2]/\theta}(w^m)_{zz}+\theta(2m\delta+\theta+N-2)z^{[(m-1)\delta+\theta-\sigma_1-2]/\theta}(w^m)_z\\
&+m\delta(m\delta+N-2)z^{[(m-1)\delta-\sigma_1-2]/\theta}w^m+z^{[\sigma_2-\sigma_1+\delta(p-1)]/\theta}w^p.
\end{split}
\end{equation}
In order to simplify \eqref{interm1}, we first impose the following condition
\begin{equation}\label{cond1}
(m-1)\delta+2\theta-\sigma_1-2=0,
\end{equation}
which reduces \eqref{interm1} to
\begin{equation}\label{eq2.gen}
\begin{split}
Cw_{\tau}&=\theta^2(w^m)_{zz}+\frac{\theta(2m\delta+\theta+N-2)}{z}(w^m)_z\\
&+\frac{m\delta(m\delta+N-2)}{z^2}w^m+z^{[\sigma_2-\sigma_1+\delta(p-1)]/\theta}w^p.
\end{split}
\end{equation}
Eq. \eqref{eq2.gen} is the starting point for the next simplifications leading to previously studied equations. The next step is to cancel out the third term in the right hand side. We let in \eqref{eq2.gen} (also fulfilling \eqref{cond1}), in a first step,
\begin{equation}\label{change1}
\delta=0, \qquad \theta=\frac{\sigma_1+2}{2}, \qquad \overline{N}=\frac{2(N+\sigma_1)}{\sigma_1+2}, \qquad \sigma=\frac{2(\sigma_2-\sigma_1)}{2+\sigma_1},
\end{equation}
and also, in a second step in order to remove constants from the coefficients,
\begin{equation}\label{change1.bis}
s=az, \qquad a=\theta^{-2/(\sigma+2)}, \qquad C=\theta^{2\sigma/(\sigma+2)},
\end{equation}
where $\sigma$ has been already defined in \eqref{change1}. Straightforward calculations show that, after applying \eqref{change1} and \eqref{change1.bis}, Eq. \eqref{eq2.gen} becomes
\begin{equation}\label{eq2.wei}
w_{\tau}=(w^m)_{ss}+\frac{\overline{N}-1}{s}(w^m)_s+s^{\sigma}w^p,
\end{equation}
which is nothing else that Eq. \eqref{eq1.wei} in radially symmetric variables, in dimension $\overline{N}$.

An important case of Eq. \eqref{eq1.gen}, also arising from physical models as explained in the Introduction, is the one with $\sigma_1=\sigma_2$ (see for example \cite{KKMS80, Ku05a, MT07, MTS12, MP20, MP21} and references therein), and we notice that our transformation \eqref{change1}-\eqref{change1.bis} maps its radially symmetric solutions onto radially symmetric solutions (in a different space dimension) to Eq. \eqref{eq1.hom}. More generally, employing this transformation for $\sigma_2\geq-2$ and $\sigma_1>-2$, which are the most usual limitations for the weights, we arrive to Eq. \eqref{eq1.wei} with
$$
\sigma=\frac{2(\sigma_2-\sigma_1)}{\sigma_1+2}=-2+\frac{2(\sigma_2+2)}{\sigma_1+2}\geq-2,
$$
falling into the range of $\sigma$ that has been studied thoroughly by the authors and their collaborators.

As another remark, the case of physical interest related to the models coming from fluid flow in channels \cite{Ock77, Ock79, SF90, CK97} has in our notation $\sigma_1>0$ (with a particular case of interest if $\sigma_1=1$), $\sigma_2=0$ and $m=1$. The resulting equation Eq. \eqref{eq1.gen} can be thus mapped to other already studied equations by employing our transformation \eqref{change1}-\eqref{change1.bis} to arrive to \eqref{eq2.wei} with
$$
\sigma=-\frac{2\sigma_1}{\sigma_1+2},
$$
which is similar to a transformation already considered in \cite{dPRS13}, which works in any dimension $N\geq1$.

Concerning the range of application of this change of variable, we observe that, on the one hand, it works very well when the initial dimension is $N\geq2$, since then also $\overline{N}\geq2$ (and in particular if $N=2$ then $\overline{N}=2$). On the other hand, it can also be used in dimension $N=1$ with the restriction $\sigma_1\geq0$, in order to ensure $\overline{N}\geq1$. In the forthcoming sections, we will mainly exploit this transformation in order to obtain completely new results or to extend and improve existing results on the radially symmetric solutions (and in particular, self-similar solutions) to Eq. \eqref{eq1.gen} by means of the already established knowledge on equations obtained through these mappings. But before examining these applications, let us consider some other available transformations that could be of use in some cases.

\subsection{Some more transformations}\label{subsec.other}

We gather in this chapter some more transformations. We stress here that the last two of them will be used in Section \ref{sec.crit} in order to obtain some interesting properties of solutions to Eq. \eqref{eq1.gen} with critical exponent $\sigma_1=-2$.

\medskip

\noindent \textbf{A second transformation}. We let in \eqref{change} (also fulfilling \eqref{cond1})
\begin{equation}\label{change2}
\delta=-\frac{N-2}{m}, \qquad \theta=\frac{m(N+\sigma_1)-N+2}{2m},
\end{equation}
and by applying once more \eqref{change1.bis}, Eq. \eqref{eq2.gen} is transformed again into \eqref{eq2.wei} but with the following new dimension and exponent $\sigma$
\begin{equation}\label{exp2}
\overline{N}=\frac{2[m(\sigma_1+2)-N+2]}{m(\sigma_1+N)-N+2}, \qquad \sigma=-\frac{2[m(\sigma_1-\sigma_2)+(N-2)(p-1)]}{m(\sigma_1+N)-N+2}.
\end{equation}
Let us notice that we can obtain once more the homogeneous case \eqref{eq1.hom}, that is, $\sigma=0$, if
$$
\sigma_2=\sigma_1+\frac{(N-2)(p-1)}{m},
$$
which generalizes the problem modelled and discussed in dimension $N=3$ in \cite{KKMS80}. More generally, if we let $\sigma_1>-2$ and $\sigma_2\geq-2$, with $N\geq2$, we get
$$
\sigma+2=\frac{2[N(m-p)+m\sigma_2+2p]}{m(\sigma_1+N)-N+2}\geq\frac{2(N-2)(m-p)}{m(\sigma_1+N)-N+2}\geq0
$$
provided $m\geq p$, hence we fall again on the range $\sigma\geq-2$. With respect to dimensions, we observe again that $N=2$ is mapped onto $\overline{N}=2$ and that $N=1$ is mapped onto $\overline{N}>2$. We also notice that, if $N=1$ and we assume $\sigma_2>-1$, which is a natural and rather standard condition (cf. \cite{ILS22}), we again find that $\sigma+2\geq0$, thus we are in the case that has been studied in previous literature. This specific transformation has been used in \cite{Floater91} to map Eq. \eqref{eq1.gen} in dimension $N=1$ and with $\sigma_2=0$, $\sigma_1>0$ into \eqref{eq2.wei} with
$$
\sigma=-\frac{2(\sigma_1+1-p)}{\sigma_1+2}.
$$
Finally, let us remark that $\overline{N}\geq1$ is equivalent to $N\leq[m(\sigma_1+4)+2]/(m+1)$ and if $\sigma_1=0$, we get as a particular case the self-map introduced in \cite{IS22}.

However, this transformation has a very serious drawback, in the sense that it involves a significant change of the properties of the initial condition of a solution from Eq. \eqref{eq1.rad} into Eq. \eqref{eq2.wei}, which might produce in the process solutions that fall out of the functional spaces or local behaviors for which the theory has been previously developed (in particular, they might for example become singular at $r=0$). This is why, its usefulness is more limited.

\medskip

\noindent \textbf{A third transformation. Euler type}. Setting in \eqref{interm1}
\begin{equation}\label{change3}
\delta=\frac{\sigma_1+2}{m-1}, \qquad C=\theta=1,
\end{equation}
we obtain the following partial differential equation whose right hand side is of Euler form (with $t=\tau$ and $z=r$ in this case)
\begin{equation}\label{interm2}
w_t=z^2(w^m)_{zz}+(2m\delta+N-1)z(w^m)_z+m\delta(m\delta+N-2)w^m+z^{\sigma_2-\sigma_1+\delta(p-1)}w^p.
\end{equation}
Let us notice at this point that this transformation applies well when $\sigma_2-\sigma_1+\delta(p-1)=0$, which is equivalent to $L(\sigma_1,\sigma_2)=0$, where $L(\sigma_1,\sigma_2)$ has been defined in \eqref{const.L2}. Introducing in \eqref{interm2} the standard change of variable for Euler equations $z=e^y$ and assuming that $L(\sigma_1,\sigma_2)=0$, we further obtain
\begin{equation}\label{eq2.eul}
\begin{split}
w_t&=(w^m)_{yy}+\frac{mN-N+2+2m(\sigma_1+1)}{m-1}(w^m)_y\\
&+\frac{m(\sigma_1+2)(mN-N+2+m\sigma_1)}{(m-1)^2}w^m+w^p,
\end{split}
\end{equation}
which is an equation with a double reaction in our range of parameters. This transformation generalizes the one introduced in \cite[Section 6]{IS22a}, and we notice that it is expected that the condition $L(\sigma_1,\sigma_2)=0$ be the critical connection between exponents allowing for \emph{eternal solutions} of the form \eqref{exponential}. In fact, if we go back to the transformation \eqref{change1} and we recall the value of $\sigma$ from \eqref{eq2.wei}, we notice that
$$
\sigma(m-1)+2(p-1)=\frac{2L(\sigma_1,\sigma_2)}{\sigma_1+\sigma_2},
$$
which suggests that the condition $L(\sigma_1,\sigma_2)=0$ for Eq. \eqref{eq1.gen} is mapped onto the condition $L=0$ for Eq. \eqref{eq1.wei}, where $L$ is the constant defined in \eqref{const.L}, and we already know from \cite{IS22a, ILS22b} that $L=0$ is the necessary and sufficient condition in Eq. \eqref{eq1.wei} to ensure the existence of eternal self-similar solutions in exponential form. Notice also that the \textbf{critical case $\sigma_1=-2$} (its criticality for the non-homogeneous porous medium equation follows for example from works such as \cite{KRV10, IS14, IS16}) works very well with this transformation by simply letting $\delta=0$ and $\theta=1$. The drawback of this transformation is that, in order to make use of it, one has to know the properties of solutions to equations of the form \eqref{eq2.eul}, which are not so much studied in literature.

\medskip

\noindent \textbf{The semilinear case $m=1$}. This is a special case where the transformations in \eqref{change1} and \eqref{change2} still work well if $\sigma_1>-2$, $\sigma_2\geq-2$. However, for the limiting case $\sigma_1=-2$ we can go deeper than in the previous paragraph by considering a similar change of variable. Indeed, since the condition $(m-1)\delta-\sigma_1-2=0$ becomes an identity and we are not forced to choose $\delta$ yet, we can fix in \eqref{interm1}
\begin{equation}\label{change4}
\delta=-\frac{\sigma_2+2}{p-1}, \qquad \theta=C=1, \qquad z=e^y,
\end{equation}
to get the following reversed Fisher-type equation with convection
\begin{equation}\label{interm3}
w_t=w_{yy}+\left[N-2-\frac{2(\sigma_2+2)}{p-1}\right]w_y-\frac{\sigma_2+2}{p-1}\left[N-2-\frac{\sigma_2+2}{p-1}\right]w+w^p.
\end{equation}
A priori \eqref{interm3} is not easy to handle, but we can still go further and remove the (linear) convection term by setting
\begin{equation}\label{change4.bis}
w(y,t)=\psi(y+Kt,t), \qquad K=N-2-\frac{2(\sigma_2+2)}{p-1}, \qquad \overline{y}=y+Kt
\end{equation}
to obtain the simplified equation
\begin{equation}\label{eq3.eul}
\psi_t=\psi_{\overline{y}\overline{y}}-\frac{\sigma_2+2}{p-1}\left[N-2-\frac{\sigma_2+2}{p-1}\right]\psi+\psi^p,
\end{equation}
which is a reversed Fisher-KPP type equation if $N>(\sigma_2+2p)/(p-1)$, or equivalently, $p>p_F(-2,\sigma_2)$, where we recall that $p_F(\sigma_1,\sigma_2)$ is defined in \eqref{Fujita}.

\section{Fujita-type and second critical exponents}\label{sec.Fujita}

In this section we consider $p>m\geq1$ and we establish both the Fujita-type exponent and the second critical exponent. The results are gathered in the next
\begin{theorem}\label{th.Fujita}
Let $\sigma_1>-2$ and $\sigma_2>-2$ if $N\geq2$, or $\sigma_1\geq0$ and $\sigma_2>-1$ if $N=1$. Then
\begin{enumerate}
  \item If $m<p\leq p_F(\sigma_1,\sigma_2)$, where $p_F(\sigma_1,\sigma_2)$ is defined in \eqref{Fujita}, then there is no non-trivial global in time radially symmetric solution to Eq. \eqref{eq1.gen}.
  \item Let now $p>p_F(\sigma_1,\sigma_2)$. If $u_0(|x|)$ is a radially symmetric function such that there exists $\delta>0$ with
\begin{equation}\label{interm4}
\liminf\limits_{|x|\to\infty}|x|^{\mu(\sigma_1,\sigma_2)}u_0(|x|)>\delta,
\end{equation}
where $\mu(\sigma_1,\sigma_2)$ is defined in \eqref{secondary}, then there is no global in time radially symmetric solution $u$ to Eq. \eqref{eq1.gen} such that $u(x,0)=u_0(x)$ for any $x\in\real^N$.
  \item Let now $p>p_F(\sigma_1,\sigma_2)$. Then for any $\mu>\mu^*(\sigma_1,\sigma_2)$ and $\delta>0$, there exists some $\epsilon>0$ (depending on $\delta$ and $\mu$) such that, for any radially symmetric function $u_0(|x|)$ such that
\begin{equation}\label{interm5}
u_0(|x|)\leq\min\{\epsilon,\delta|x|^{-\mu}\},
\end{equation}
there exists (at least) a global in time radially symmetric solution $u$ to Eq. \eqref{eq1.gen} such that $u(x,0)=u_0(x)$, for any $x\in\real^N$.
\item For any $p>p_F(\sigma_1,\sigma_2)$, there exists a global in time solution to Eq. \eqref{eq1.gen} in forward self-similar form \eqref{forward}.
\end{enumerate}
\end{theorem}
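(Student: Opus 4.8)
The plan is to use the main transformation of Subsection \ref{subsec.main} to reduce all four assertions to the corresponding, already established results for the single-weight equation \eqref{eq1.wei}, exploiting the fact (recalled in the Introduction) that those results are valid for \emph{every} $\sigma>-2$, so that the technical restrictions in \eqref{Fujita} and \eqref{secondary} dissolve automatically once we pass to \eqref{eq1.wei}. First I would fix $\sigma_1>-2$ and apply \eqref{change1}--\eqref{change1.bis}, which sets up a bijection between radially symmetric solutions $u(r,t)$ of \eqref{eq1.rad} and radially symmetric solutions $w(s,\tau)$ of \eqref{eq2.wei} in dimension $\overline{N}=2(N+\sigma_1)/(\sigma_1+2)$ and with exponent $\sigma=2(\sigma_2-\sigma_1)/(\sigma_1+2)$. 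Under the standing hypotheses one checks that $\overline{N}\geq2$ when $N\geq2$, that $\overline{N}\geq1$ when $N=1$ and $\sigma_1\geq0$, and that $\sigma+2=2(\sigma_2+2)/(\sigma_1+2)>0$; hence the transformed equation genuinely lands in the range $\sigma>-2$ where the theory for \eqref{eq1.wei} is available.

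The second step is to verify that the relevant critical exponents transform as expected. A direct computation with the above values gives
\begin{equation*}
m+\frac{2+\sigma}{\overline{N}}=m+\frac{\sigma_2+2}{N+\sigma_1}=p_F(\sigma_1,\sigma_2),
\end{equation*}
so that the Fujita exponent of \eqref{eq2.wei} coincides with $p_F(\sigma_1,\sigma_2)$. Likewise, since in \eqref{change1} we have taken $\delta=0$, the solution itself is unchanged while $z=r^{\theta}$ with $\theta=(\sigma_1+2)/2>0$; consequently a decay $u_0(r)\sim Cr^{-\mu}$ is carried into $w_0(z)\sim Cz^{-\mu/\theta}$. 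Matching $\mu/\theta$ against the second critical exponent $(\sigma+2)/(p-m)$ of \eqref{eq2.wei} yields exactly $\mu=(\sigma_2+2)/(p-m)=\mu(\sigma_1,\sigma_2)$, and in particular the liminf condition \eqref{interm4} and the upper bound \eqref{interm5} are equivalent to their counterparts for $w_0$. With these identifications, items 1, 2 and 3 follow by transporting the known non-existence and existence statements for \eqref{eq1.wei} (from the works of Qi, Suzuki and the second-critical-exponent literature cited above) back to \eqref{eq1.gen}, the transfer being legitimate because the comparison principle and the notion of radially symmetric solution are preserved by the change of variables.

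For item 4 I would construct a forward self-similar solution at the level of \eqref{eq2.wei}, where for $p$ above the Fujita exponent such profiles are known to exist, and then pull it back. Writing $w(s,\tau)=\tau^{\alpha}g(s\tau^{-\beta})$ and undoing \eqref{change1}--\eqref{change1.bis}, the relation $z=r^{\theta}$ turns the self-similar variable $s\tau^{-\beta}\sim r^{\theta}t^{-\beta}$ into $(rt^{-\beta/\theta})^{\theta}$, so that $u(r,t)=t^{\alpha}f(rt^{-\beta/\theta})$ with $f(\xi)=C^{\alpha}g(aC^{-\beta}\xi^{\theta})$. This is precisely the forward self-similar form \eqref{forward}, which establishes item 4; note that the self-similar exponents are automatically consistent with the scaling of \eqref{eq1.gen} because the change of variables is an exact map of solutions.

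The main obstacle is not the algebra of Step 2, which is routine bookkeeping, but ensuring that the imported results for \eqml{eq1.wei} genuinely cover the precise family of parameters produced by the transformation: real (non-integer) dimension $\overline{N}$, and in particular \emph{negative} exponents $\sigma$ arising when $\sigma_2<\sigma_1$. Here the choice $\delta=0$ is essential --- because the solution is not multiplied by any power of $r$, the behavior at the origin and the functional framework are preserved, so no spurious singularities are created (this is exactly the feature the paper contrasts with the drawback of the transformation \eqref{change2}). The delicate point to state carefully is therefore that the cited theorems for \eqref{eq1.wei} hold in the full range $\sigma>-2$ and for real $\overline{N}\geq1$, which is what allows the technical constraints $0\leq\sigma_1\leq\sigma_2$ of \eqref{Fujita}--\eqref{secondary} to be discarded for radially symmetric solutions.
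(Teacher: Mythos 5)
Your proposal is correct and takes essentially the same route as the paper: the main transformation \eqref{change1}--\eqref{change1.bis} with $\delta=0$, the identifications $m+(\sigma+2)/\overline{N}=p_F(\sigma_1,\sigma_2)$ and $\theta(\sigma+2)/(p-m)=\mu(\sigma_1,\sigma_2)$, the transport of the Qi--Suzuki non-existence/existence theorems (read with $\overline{N}$ as a real parameter), and the pull-back of the forward self-similar solution of \eqref{eq2.wei} for item 4. One precision worth fixing: for $N=1$, where $\overline{N}=2(\sigma_1+1)/(\sigma_1+2)\in[1,2)$, the imported results require $\sigma>-\overline{N}$ rather than merely $\sigma>-2$; this is exactly where the hypothesis $\sigma_2>-1$ enters, since $\sigma+\overline{N}=2(\sigma_2+1)/(\sigma_1+2)>0$, whereas your write-up checks only the weaker (and in this sub-case insufficient) condition $\sigma+2>0$.
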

Let us remark at this point that the previous statement is equivalent to say that $p_F(\sigma_1,\sigma_2)$ is the Fujita-type exponent for Eq. \eqref{eq1.gen}, that the critical case $p=p_F(\sigma_1,\sigma_2)$ belongs to the blow-up range, and that $\mu(\sigma_1,\sigma_2)$ is the second critical exponent for Eq. \eqref{eq1.gen}. Thus, Theorem \ref{th.Fujita}, at least in the class of radially symmetric solutions, strongly generalizes previous results in papers such as \cite{LD14, WZ06, ZM14} since we completely get rid of the technical limitations on $\sigma_1$ and $\sigma_2$ stated in \eqref{Fujita} and \eqref{secondary}.
\begin{proof}
We start from the results in Qi \cite{Qi98} and Suzuki \cite{Su02} (see also references therein) concerning the Fujita-type exponent and the second critical exponent for Eq. \eqref{eq1.wei} when $p>m$. Let us notice that they cover all the range $\sigma>-2$ if $N\geq2$ (with the standard restriction $\sigma>-1$ if $N=1$).

\medskip

Let us first assume that $N\geq2$. In this case, we recall that the transformation \eqref{change1} maps solutions to Eq. \eqref{eq1.rad} onto radially symmetric solutions to Eq. \eqref{eq2.wei} with exponent $\sigma=2(\sigma_2-\sigma_1)/(2+\sigma_1)>-2$ and dimension $\overline{N}\geq2$ defined in \eqref{change1}. We then know that, if $m<p\leq m+(\sigma+2)/\overline{N}$, there is no non-trivial global in time solution to Eq. \eqref{eq2.wei}, according to \cite[Theorem 1.1]{Qi98}. Noticing that
$$
m+\frac{\sigma+2}{\overline{N}}=m+\frac{\sigma_2+2}{\sigma_1+N}=p_F(\sigma_1,\sigma_2),
$$
and the fact that \eqref{change1} acts only on the space variable, thus preserving finite time blow-up of solutions, we readily obtain that there are no global solutions to Eq. \eqref{eq1.rad} if $m<p<p_F(\sigma_1,\sigma_2)$. To prove the second and third statement of the theorem, we start from \cite[Theorem 1]{Su02}, which states that the second critical exponent to Eq. \eqref{eq2.wei} is $\mu^*=(\sigma+2)/(p-m)$, of course for $p>m+(\sigma+2)/\overline{N}$. Notice that, if $u_0(|x|)$ is a radially symmetric function such that $u_0(x)\sim\delta|x|^{-\mu}$ as $|x|\to\infty$, for some generic $\mu>0$, then transformation \eqref{change1} maps in into a function $w$ such that
$$
w(s)\sim\delta s^{-2\mu/(\sigma_1+2)}, \qquad {\rm as} \ s\to\infty.
$$
By imposing the condition that $2\mu/(\sigma_1+2)=\mu^*$, we obtain
$$
\mu=\frac{(\sigma_1+2)(\sigma+2)}{2(p-m)}=\frac{\sigma_2+2}{p-m}=\mu(\sigma_1,\sigma_2).
$$
It is then easy to derive parts 2 and 3 of Theorem \ref{th.Fujita} from \cite[Theorem 1, (b) and (c)]{Su02} taking into account that \eqref{change1} preserve either the finite time blow-up or the global existence in time of the solutions that are mapped and that \eqref{interm4}, respectively \eqref{interm5} are equivalent to the conditions in \cite[Theorem 1, (b)]{Su02}, respectively \cite[Theorem 1, (c)]{Su02}. Finally, if $N=1$, we notice that
$$
\overline{N}=\frac{2(\sigma_1+1)}{\sigma_1+2}\geq1,
$$
since $\sigma_1\geq0$, and a simple inspection of the proofs shows that the results in \cite{Qi98, Su02} apply for $\sigma>-\overline{N}$ also when taking $\overline{N}\geq1$ as a parameter in Eq. \eqref{eq2.wei} for radially symmetric solutions. Noticing that
\begin{equation}\label{interm6}
\sigma+\overline{N}=\frac{2(\sigma_2+1)}{\sigma_1+2}>0,
\end{equation}
the rest of the proof follows the same lines as for $N\geq2$.

\medskip

We are only left with the fourth statement, which follows by undoing the transformation \eqref{change1}-\eqref{change1.bis} to the radially symmetric self-similar solution to Eq. \eqref{eq1.wei} given in \cite[Theorem 1.2]{Qi98}. Indeed, it is shown in the latter reference that Eq. \eqref{eq1.wei} admits a self-similar solution in the form
$$
W(s,\tau)=\tau^{-\alpha}f(|s|\tau^{-\beta}), \qquad \alpha=\frac{\sigma+2}{\sigma(m-1)+2(p-1)}, \qquad \beta=\frac{p-m}{\sigma(m-1)+2(p-1)},
$$
provided $\sigma>-\overline{N}$, which also stays true when considering $\overline{N}$ as a real parameter in the equation of the self-similar profiles in \cite[Section 4]{Qi98}.
By taking into account \eqref{interm6} and undoing \eqref{change1} to the solution $W$, we find a solution in self-similar form as follows:
$$
U(x,t)=t^{-\alpha(\sigma_1,\sigma_2)}F(|x|t^{-\beta(\sigma_1,\sigma_2)}),
$$
where $C$ has been defined in \eqref{change1.bis} and
\begin{equation*}
\begin{split}
&F(\xi)=C^{-\alpha(\sigma_1,\sigma_2)}f\left(aC^{-\beta(\sigma_1,\sigma_2)}\xi^{(\sigma_1+2)/2}\right), \\
&\alpha(\sigma_1,\sigma_2)=\frac{2+\sigma_2}{L(\sigma_1,\sigma_2)}, \qquad \beta(\sigma_1,\sigma_2)=\frac{p-m}{L(\sigma_1,\sigma_2)},
\end{split}
\end{equation*}
and we recall that $a$ is defined in \eqref{change1.bis}, while the constant $L(\sigma_1,\sigma_2)$ is defined in \eqref{const.L2}.
\end{proof}

\section{Separate variable solutions in the case $p=m$}\label{sec.equal}

Another important feature of Eq. \eqref{eq1.gen} is the existence of self-similar, radially symmetric solutions in one of the forms \eqref{forward}, \eqref{backward} or \eqref{exponential} as discussed in the Introduction. In this section, we use our transformations to construct self-similar solutions in the critical case $p=m$, which in this particular case will be of separate variables, and find a relationship between $\sigma_1$, $\sigma_2$ and the parameters $m=p$ and $N$ limiting their existence. We start from the recent results obtained in \cite{IS22, IL22c}.
\begin{theorem}\label{th.pequalm}
Let $N\geq2$ and $-2<\sigma_1\leq\sigma_2$ or $N=1$ and $0\leq\sigma_1\leq\sigma_2$. Define
\begin{equation}\label{sigma.crit}
\sigma_{2,c}:=\sigma_1+\frac{[2(N-1)+\sigma_1](m-1)}{3m+1}.
\end{equation}
We have the following affirmations concerning existence and non-existence of self-similar solutions in backward form:
\begin{enumerate}
  \item For any $N\geq1$ and for any $\sigma_2$ such that $\sigma_1\leq\sigma_2<\sigma_{2,c}$, there exist separate variable solutions to Eq. \eqref{eq1.gen} presenting finite time blow-up, with the following form
\begin{equation}\label{sep.var}
u(x,t)=(T-t)^{-\alpha}F(|x|), \qquad \alpha=\frac{1}{m-1}.
\end{equation}
  \item For any $N\geq1$, there are no radially symmetric separate variable solutions provided that
\begin{equation}\label{nonexist.pm}
\sigma_2\geq\sigma_1+\frac{(m-1)(N+\sigma_1)}{m+1}>\sigma_{2,c}.
\end{equation}
  \item If $N>(m\sigma_1+4m+2)/(m+1)$, the non-existence result becomes sharp: there is no radially symmetric separate variable solution to Eq. \eqref{eq1.gen} if $\sigma_2\geq\sigma_{2,c}$.
\end{enumerate}
\end{theorem}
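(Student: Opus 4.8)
The plan is to obtain part (3) exactly as parts (1) and (2), namely by transferring, through the main transformation \eqref{change1}-\eqref{change1.bis}, the corresponding existence/non-existence trichotomy for the weighted equation \eqref{eq1.wei} with $p=m$ that was established in \cite{IS22, IL22c}. The first thing I would check is that, in the critical case $p=m$, this transformation sets up a bijection between separate variable solutions of the two equations. Since \eqref{change1} uses $\delta=0$, the map $u(r,t)=w(z,\tau)$ with $z=r^{\theta}$, $s=az$ and $\tau=Ct$ acts only on the radial variable and rescales time; hence an ansatz $u(x,t)=(T-t)^{-\alpha}F(|x|)$ with $\alpha=1/(m-1)$ is carried into $w(s,\tau)=(\widetilde{T}-\tau)^{-\alpha}g(s)$ with the same exponent $\alpha=1/(m-1)$ (the blow-up exponent being dictated only by the equality $m=p$), and conversely. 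Therefore existence, or non-existence, of separate variable solutions is \emph{equivalent} for \eqref{eq1.gen} in dimension $N$ with weights $(\sigma_1,\sigma_2)$ and for \eqref{eq1.wei} in dimension $\overline{N}=2(N+\sigma_1)/(\sigma_1+2)$ with exponent $\sigma=2(\sigma_2-\sigma_1)/(\sigma_1+2)$, the latter being treated with $\overline{N}$ as a real parameter exactly as in the proof of Theorem \ref{th.Fujita}.

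Next I would invoke the sharp non-existence statement for \eqref{eq1.wei} with $p=m$ coming from \cite{IS22, IL22c}: no separate variable profile exists once $\sigma\geq\sigma_c:=2(\overline{N}-1)(m-1)/(3m+1)$, provided the dimension satisfies $\overline{N}>2(2m+1)/(m+1)$. It then only remains to translate both thresholds back through \eqref{change1}. Using $\overline{N}-1=[2(N-1)+\sigma_1]/(\sigma_1+2)$, the equality $\sigma=\sigma_c$ becomes $\sigma_2=\sigma_1+[2(N-1)+\sigma_1](m-1)/(3m+1)=\sigma_{2,c}$; and since $\sigma$ is strictly increasing in $\sigma_2$, the range $\sigma\geq\sigma_c$ corresponds precisely to $\sigma_2\geq\sigma_{2,c}$. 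Likewise $\overline{N}>2(2m+1)/(m+1)$ is equivalent to $(N+\sigma_1)(m+1)>(2m+1)(\sigma_1+2)$, that is to $N(m+1)>m\sigma_1+4m+2$, which is exactly the hypothesis $N>(m\sigma_1+4m+2)/(m+1)$ of part (3). Combined with the bijection of the previous paragraph, this yields the asserted non-existence for every $\sigma_2\geq\sigma_{2,c}$ and completes the argument.

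The routine algebra of transferring the thresholds is not the difficulty; the main obstacle lies entirely in securing the cited input in the precise form required. Two points deserve care. First, the non-existence for \eqref{eq1.wei} must include the \emph{borderline} value $\sigma=\sigma_c$ (equivalently $\sigma_2=\sigma_{2,c}$), so that part (3) genuinely complements the strict existence range $\sigma_2<\sigma_{2,c}$ of part (1) and makes the result sharp. This threshold-inclusive non-existence is the demanding step in \cite{IS22, IL22c}, expected to be proved by a phase-plane/shooting study of the profile ODE in which the dimensional restriction $\overline{N}>2(2m+1)/(m+1)$ is precisely what prevents the admissible local behaviors at $s=0$ and at $s=\infty$ from connecting. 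Second, because $\overline{N}$ is generically non-integer I must make sure this analysis is valid with $\overline{N}$ viewed as a real parameter, as was already the case in Theorem \ref{th.Fujita}; under the standing assumptions one has $\overline{N}\geq1$, while the extra hypothesis of part (3) pushes $\overline{N}$ above $2(2m+1)/(m+1)\in[3,4)$, which is the regime covered by the cited results.
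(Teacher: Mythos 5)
Your proposal follows essentially the same route as the paper: apply the main transformation \eqref{change1}--\eqref{change1.bis} (which, since $\delta=0$, carries separate variable solutions with exponent $\alpha=1/(m-1)$ to separate variable solutions and back), import the existence threshold $\sigma<\sigma_c=2(m-1)(\overline{N}-1)/(3m+1)$ and the sharp non-existence for $\sigma\geq\sigma_c$ under $\overline{N}>(4m+2)/(m+1)$ from \cite{IS22,IL22c} with $\overline{N}$ treated as a real parameter, and translate the thresholds, which correctly yields $\sigma_{2,c}$ and $N>(m\sigma_1+4m+2)/(m+1)$. The algebraic identifications match the paper's exactly, so the argument is correct and not materially different.
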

Notice that, if we let $\sigma_1=0$, we obtain exactly the critical exponent introduced in \cite{IS22} and the estimates for non-existence in \cite[Theorem 1.1]{IL22c}.
\begin{proof}
\noindent \textbf{Part 1.} Let first $\sigma_2\in[\sigma_1,\sigma_{2,c})$. By applying the transformation \eqref{change1}, we map radially symmetric solutions to Eq. \eqref{eq1.gen} onto radially symmetric solutions to Eq. \eqref{eq1.wei} with $\sigma=2(\sigma_2-\sigma_1)/(\sigma_1+2)\geq0$. We infer from \cite{IS22} that for any
\begin{equation}\label{interm7}
0\leq\sigma<\sigma_c:=\frac{2(m-1)(\overline{N}-1)}{3m+1},
\end{equation}
there exist separate variable solutions of the form
$$
w(s,\tau)=(T_0-\tau)^{-1/(m-1)}f(|x|),
$$
for suitable profiles $f$ satisfying a differential equation given in \cite{IS22}. We then observe that, in terms of the correspondences in \eqref{change1}, condition \eqref{interm7} is equivalent to
$$
0\leq\frac{2(\sigma_2-\sigma_1)}{2+\sigma_1}<\frac{2(m-1)}{3m+1}\left[\frac{2(N+\sigma_1)}{2+\sigma_1}-1\right],
$$
which is the same as
$$
\sigma_1\leq\sigma_2<\sigma_1+\frac{[2(N-1)+\sigma_1](m-1)}{3m+1}=\sigma_{2,c}.
$$
Finally, undoing the transformation \eqref{change1}, we get radially symmetric separate variable solutions with the form
$$
u(x,t)=w(ar^{\theta},Ct)=(T_0-Ct)^{-1/m-1}f(a|x|^{\theta})=(T-t)^{-1/(m-1)}F(|x|),
$$
which is the same as \eqref{sep.var}, where $a$, $C$ are defined in \eqref{change1.bis} and
$$
T_0=\frac{T}{C}, \qquad F(|x|)=C^{-1/(m-1)}f(a|x|^{\theta}), \qquad \theta=\frac{\sigma_1+2}{2}.
$$

\medskip

\noindent \textbf{Part 2.} This follows from the general non-existence result of \cite[Theorem 1.1]{IL22c}, proved with the aid of a Pohozaev identity, which in fact holds true for general separate variable solutions to Eq. \eqref{eq1.wei} with $p=m$. It states that, in any dimension $\overline{N}\geq1$, there are no separate variable solutions if
\begin{equation}\label{interm8}
\sigma\geq\frac{(m-1)\overline{N}}{m+1}.
\end{equation}
We then find, by replacing $\sigma$ and $\overline{N}$ from \eqref{change1}, that \eqref{interm8} becomes
$$
\frac{2(\sigma_2-\sigma_1)}{2+\sigma_1}\geq\frac{2(m-1)(N+\sigma_1)}{(m+1)(2+\sigma_1)},
$$
which leads to \eqref{nonexist.pm} after obvious simplifications.

\medskip

\noindent \textbf{Part 3.} A closer inspection of Step 4 of the proof of \cite[Theorem 1.1]{IL22c} reveals that, for the optimality in the non-existence range, and if assuming that $N$ is just a real parameter in a partial differential equation in radially symmetric variables, it is required to hold true that
$$
\sigma_c=\frac{2(m-1)(\overline{N}-1)}{3m+1}>\frac{2(m-1)}{m+1},
$$
which leads to $\overline{N}>(4m+2)/(m+1)$, a fact which also confirms the deductions made at a formal level in \cite{IS22}. Under this greatness condition on $\overline{N}$, non-existence of separate variable solutions (even if not radially symmetric) in Eq. \eqref{eq1.wei} holds true for any $\sigma\geq\sigma_c$. But the latter conditions on $\overline{N}$ and $\sigma$ together with the definition of $\overline{N}$ in \eqref{change1} give
$$
N>\frac{m\sigma_1+4m+2}{m+1}, \qquad \sigma_2\geq\sigma_1+\frac{[2(N-1)+\sigma_1](m-1)}{3m+1}=\sigma_{2,c},
$$
and this, together with the existence proved in Part 1, close the circle and show the sharpness of the non-existence range for $\sigma_2$ with respect to $\sigma_1$.
\end{proof}
Related to the case $p=m$, we can also obtain some results when one of the weights is the celebrated Hardy potential, that is, $K|x|^{-2}$, for some suitable constant $K>0$ which does not exceeds the optimal Hardy constant $K_*(N)=(N-2)^2/4$, in dimension $N\geq3$. This constant has been obtained in the classical work by Baras and Goldstein \cite{BG84} for $m=p=1$ as a limit between the range of existence and non-existence of solutions. We will thus deal with a slightly modified equation than Eq. \eqref{eq1.gen}, namely
\begin{equation}\label{eq1.Hardy}
|x|^{\sigma_1}u_t=\Delta u^m+\frac{K}{|x|^2}u^m,
\end{equation}
posed in dimension $N\geq 3$ and consider radially symmetric and compactly supported initial conditions $u(x,0)=u_0(x)$ such that $u_0\in C(\real^N)$, $u_0\geq0$. We then have the following
\begin{theorem}\label{th2.pequalm}
Let $N\geq3$, $\sigma_1>-2$ and let $u_0\in C(\real^N)$ be a radially symmetric, compactly supported function such that $u_0(x)\geq0$ for any $x\in\real^N$ and $u_0\not\equiv0$. Let then $K$ be such that $0<K<(N-2)^2/4$. Then there exists a unique self-similar solution to \eqref{eq1.Hardy} taking $u_0$ as initial condition as $t\to0$. Moreover, if in addition
\begin{equation}\label{lim1}
\lim\limits_{x\to0}|x|^{-(\sigma_1+2)/(m-1)}u_0(x)=+\infty,
\end{equation}
then we have a case of \emph{instantaneous blow-up} at $x=0$, in the sense that $\lim\limits_{x\to0}u(x,t)=+\infty$ for any $t>0$. On the contrary, if
\begin{equation}\label{lim2}
\limsup\limits_{x\to0}|x|^{-(\sigma_1+2)/(m-1)}u_0(x)<+\infty,
\end{equation}
then the solution $u(x,t)$ blows up only at $x=0$ in finite time $t=T\in(0,\infty)$ but not instantaneously. In both cases, the solution can be continued after the blow-up time for any $t>0$.
\end{theorem}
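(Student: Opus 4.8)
The plan is to map Eq.~\eqref{eq1.Hardy} onto a subcritical Hardy porous medium equation with reaction exponent $p=m$ in the transformed dimension $\overline N$, and then to transfer the Baras--Goldstein-type existence, uniqueness and blow-up dichotomy from that equation back to the original one. Since Eq.~\eqref{eq1.Hardy} is precisely Eq.~\eqref{eq1.gen} in the critical regime $\sigma_2=-2$, $p=m$ (with the reaction constant $K$ carried along), I apply the change of variable \eqref{change} with $\delta=0$ and $\theta=(\sigma_1+2)/2$, which satisfies \eqref{cond1} and annihilates the zeroth-order term in \eqref{eq2.gen}. Tracking $K$ through \eqref{interm1}--\eqref{eq2.gen}, the reaction exponent $[\sigma_2-\sigma_1+\delta(p-1)]/\theta$ collapses to $-2$, so the image equation carries exactly $\sigma=-2$. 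At this value the rescaling \eqref{change1.bis} degenerates (its exponents have $\sigma+2$ in the denominator); I therefore bypass it, fixing $C=\theta^2$ and keeping $s=z$, which turns \eqref{eq2.gen} into
\begin{equation*}
w_\tau=(w^m)_{ss}+\frac{\overline N-1}{s}(w^m)_s+\frac{K'}{s^2}\,w^m,\qquad \overline N=\frac{2(N+\sigma_1)}{\sigma_1+2},\quad K'=\frac{4K}{(\sigma_1+2)^2}.
\end{equation*}

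The first verification I carry out is that the transformation preserves subcriticality of the Hardy constant. From $\overline N-2=2(N-2)/(\sigma_1+2)$ one gets $(\overline N-2)^2/4=(N-2)^2/(\sigma_1+2)^2$, so that $K<(N-2)^2/4$ is exactly equivalent to $K'<(\overline N-2)^2/4$. Thus the hypothesis on $K$ places the image problem strictly below the optimal Hardy threshold in dimension $\overline N$, and I may invoke the known theory of the Hardy porous medium equation with $p=m$ (the nonlinear counterpart of the phenomena first observed by Baras and Goldstein \cite{BG84}), treating $\overline N$ as a real parameter exactly as elsewhere in the paper. That theory supplies a unique solution emanating from the transformed datum $w_0(s):=u_0(s^{1/\theta})$, the sharp instantaneous-versus-delayed blow-up alternative at $s=0$, and the continuation of the solution past the blow-up time.

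It then remains to translate the threshold conditions. As $\delta=0$, the map reads $u(r,t)=w(r^{\theta},Ct)$, whence $w_0(s)=u_0(s^{1/\theta})$ and a one-line computation using $(\sigma_1+2)/\theta=2$ gives $r^{-(\sigma_1+2)/(m-1)}u_0(r)=s^{-2/(m-1)}w_0(s)$. Consequently \eqref{lim1} and \eqref{lim2} are precisely the conditions $\lim_{s\to0}s^{-2/(m-1)}w_0(s)=+\infty$ and $\limsup_{s\to0}s^{-2/(m-1)}w_0(s)<+\infty$ that separate the two regimes for the image equation; here $s^{2/(m-1)}$ is the critical local rate at which the singular reaction $K's^{-2}w^m$ balances $w$ itself in the initial layer, above which it is tame and below which it forces immediate explosion. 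Because the change of variables acts only on the spatial variable and merely rescales time, it preserves finite-time blow-up, the blow-up location ($x=0\leftrightarrow s=0$), global existence and the possibility of continuation; undoing it therefore delivers all four assertions of the theorem, including the self-similar local structure near $x=0$ inherited from the $p=m$, $\sigma_2=-2$ scaling.

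I expect the crux to lie not in the transformation but in pinning down the input from the image equation: one must ensure that the unique-solvability statement and, above all, the sharp dichotomy between instantaneous and finite-time blow-up for the Hardy porous medium equation with $p=m$ are available at exactly the local rate $s^{2/(m-1)}$, and that they survive when $\overline N$ is a noninteger real parameter rather than an integer dimension. Matching the borderline behaviors \eqref{lim1}--\eqref{lim2} with the thresholds of that theory as genuine equivalences (not merely up to constants), and confirming that the degenerate critical rescaling at $\sigma=-2$ has been executed so that $(\overline N-2)^2/4$ corresponds to $(N-2)^2/4$, are the delicate points on which the argument turns.
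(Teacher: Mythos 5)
Your proposal is correct and follows essentially the same route as the paper: apply the main transformation with $\delta=0$, $\theta=(\sigma_1+2)/2$ to land on the Hardy porous medium equation $w_\tau=\Delta w^m+\overline{K}s^{-2}w^m$ in dimension $\overline{N}$, check that $K<(N-2)^2/4$ is equivalent to $\overline{K}<(\overline{N}-2)^2/4$, and pull back the existence/uniqueness and the instantaneous-versus-delayed blow-up dichotomy at the critical rate $s^{-2/(m-1)}\mapsto r^{-(\sigma_1+2)/(m-1)}$. The ``known theory'' you invoke is the authors' own prior work \cite{IS20} (Proposition 1.2 and Theorems 1.3--1.4 there), which is exactly the input the paper uses; your explicit handling of the degenerate rescaling at $\sigma=-2$ via $C=\theta^2$ is a slightly more careful version of the same bookkeeping.
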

\begin{proof}
We notice that, if $\sigma_1>-2$ and $\sigma_2=-2$, by applying the transformation \eqref{change1}-\eqref{change1.bis} we are left with Eq. \eqref{eq2.wei} with $\sigma=-2$. We then apply previously established results on the equation
\begin{equation}\label{eq2.Hardy}
w_{\tau}=\Delta w^m+\overline{K}s^{-2}w^m, \qquad 0<\overline{K}<K_*(\overline{N})=\frac{(\overline{N}-2)^2}{4}.
\end{equation}
Notice then that \eqref{eq2.Hardy} is obtained via the transformation \eqref{change1} from Eq. \eqref{eq1.Hardy} with a constant $K=\overline{K}\theta^2$. In particular, the existence of radially symmetric solutions to the Cauchy problem follows directly from \cite[Proposition 1.2]{IS20}, provided
$$
K=\overline{K}\theta^2\leq\frac{(\overline{N}-2)^2}{4}\frac{(\sigma_1+2)^2}{4}=\frac{(N-2)^2}{4}=K_*(N),
$$
as claimed. The local behavior near $x=0$ follows from \cite[Theorems 1.3 and 1.4]{IS20} by noticing that the limiting power $s^{-2/(m-1)}$ in the variable $s=ar^{\theta}$ of the transformed equation is mapped into the limiting power
$$
r^{-2\theta/(m-1)}=r^{-(\sigma_1+2)/(m-1)},
$$
while the properties of the initial conditions remain unchanged by the transformation \eqref{change1}.
\end{proof}

\section{Self-similar solutions for $1\leq p<m$}\label{sec.lower}

The analysis of Eq. \eqref{eq1.wei} in this range of exponents was practically lacking from theory and some significant recent progress in its understanding has been achieved by the authors in a series of papers, see for example \cite{IS19, IS21a, ILS22, ILS22b, IMS21, IS23}, where different ranges related to the dimension $N$ and the sign of the constant $L$ in \eqref{const.L} were considered. The outcome of this analysis was quite unexpected, all the exponents having a strong influence in both the form of the self-similar solutions and of their profiles. We translate and generalize these results to Eq. \eqref{eq1.gen} by employing our transformations. The analysis will be split in this case with respect to the sign of the constant $L(\sigma_1,\sigma_2)$ defined in \eqref{const.L2}.

\medskip

\noindent \textbf{Case 1:} $\mathbf{L(\sigma_1,\sigma_2)<0}.$ We show in this case that forward self-similar solutions appear, which are global in time. More precisely, we have
\begin{theorem}\label{th1.psmall}
Let $m>1$, $p$, $\sigma_1$, $\sigma_2$ be such that
\begin{equation}\label{cond.exp1}
-2<\sigma_2<\sigma_1, \qquad 1\leq p<p_c(\sigma_1,\sigma_2):=m-\frac{(m-1)(\sigma_2+2)}{\sigma_1+2},
\end{equation}
in dimension $N\geq2$, adding up the restrictions $\sigma_1\geq0$, $\sigma_2>-1$ in dimension $N=1$. Then there exists a unique self-similar solution in forward form \eqref{forward}, where
$$
\alpha=-\frac{\sigma+2}{L(\sigma_1,\sigma_2)}, \qquad \beta=-\frac{m-p}{L(\sigma_1,\sigma_2)},
$$
such that its profile $f(\xi)$, with $\xi=|x|(\theta^2t)^{-\beta}$, $\theta=(\sigma_1+2)/2$, has the following local behavior at the origin
\begin{equation}\label{beh.Q1}
f(\xi)\sim\left[D-\frac{m-p}{m(N+\sigma_2)(2+\sigma_2)}\xi^{\sigma_2+2}\right]^{1/(m-p)}, \qquad \text{as} \ \xi\to0,
\end{equation}
where $D>0$ is a constant depending on $\sigma_1$ and $\sigma_2$, and is compactly supported at some $\xi_0\in(0,\infty)$ with
$$
f(\xi)>0 \ \text{for} \ \xi\in(0,\xi_0), \qquad f(\xi_0)=0, \qquad (f^m)'(\xi_0)=0.
$$
\end{theorem}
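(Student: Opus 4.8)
The plan is to use the main transformation~\eqref{change1}-\eqref{change1.bis} to reduce the statement to a known existence result for forward self-similar solutions of the weighted equation~\eqref{eq2.wei} in the range $1\leq p<m$, and then to translate both the self-similar exponents and the local behavior back to Eq.~\eqref{eq1.gen}. First I would verify that the hypotheses~\eqref{cond.exp1} are mapped, under~\eqref{change1}, onto the hypotheses needed for the corresponding result on Eq.~\eqref{eq1.wei}. Since $-2<\sigma_2<\sigma_1$ gives $\sigma=2(\sigma_2-\sigma_1)/(\sigma_1+2)<0$ and in fact $\sigma>-2$ (using $\sigma_2>-2$), we land in the range $\sigma\in(-2,0)$ for the target equation, while $\overline{N}\geq2$ when $N\geq2$ (and $\overline{N}\geq1$ under the extra restrictions when $N=1$). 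The upper bound $p<p_c(\sigma_1,\sigma_2)$ should be exactly the condition that translates $L(\sigma_1,\sigma_2)<0$, which by the identity $\sigma(m-1)+2(p-1)=2L(\sigma_1,\sigma_2)/(\sigma_1+\sigma_2)$ recorded in the excerpt corresponds to $L<0$ for~\eqref{eq1.wei}; I would check that $p_c$ is precisely the threshold $p=m-(m-1)(\sigma+2)/2$ pulled back through~\eqref{change1}.

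Next I would invoke the existence and uniqueness of a forward self-similar solution to Eq.~\eqref{eq1.wei} with a compactly supported profile in the regime $1\leq p<m$, $\sigma\in(-2,0)$, as established by the authors in the cited works (e.g.~\cite{ILS22, IS21a, IS23}). Such a solution has the form $W(s,\tau)=\tau^{\alpha_0}g(|s|\tau^{-\beta_0})$ with
\begin{equation*}
\alpha_0=-\frac{\sigma+2}{\sigma(m-1)+2(p-1)}, \qquad \beta_0=-\frac{m-p}{\sigma(m-1)+2(p-1)},
\end{equation*}
and with a profile $g$ that behaves near the origin like a power of the form $[D_0-c\,\eta^{\sigma+2}]^{1/(m-p)}$ and is compactly supported at some finite $\eta_0$ with vanishing flux $(g^m)'(\eta_0)=0$. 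The exponents $\alpha,\beta$ claimed in the theorem should then follow by substituting $\sigma=2(\sigma_2-\sigma_1)/(\sigma_1+2)$ into $\alpha_0,\beta_0$ and simplifying via the identity relating $\sigma(m-1)+2(p-1)$ to $L(\sigma_1,\sigma_2)$, yielding $\alpha=-(\sigma+2)/L(\sigma_1,\sigma_2)$ and $\beta=-(m-p)/L(\sigma_1,\sigma_2)$.

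The central computational step is to undo~\eqref{change1}-\eqref{change1.bis} and track how the self-similar profile and its local behavior transform. Since $\delta=0$ the transformation acts only on the spatial variable through $z=r^{\theta}$, $s=az$ with $\theta=(\sigma_1+2)/2$, and on time through $\tau=Ct$. Writing $u(r,t)=w(ar^{\theta},Ct)$ and feeding in the self-similar form of $W$, the self-similar variable $|s|\tau^{-\beta_0}$ becomes a constant multiple of $|x|^{\theta}t^{-\beta_0}$, so I would reparametrize to the stated $\xi=|x|(\theta^2 t)^{-\beta}$ and absorb the constants $a,C$ into the profile, getting $f$ as an explicit rescaling of $g$. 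For the local behavior~\eqref{beh.Q1}, the key is that the power $\eta^{\sigma+2}$ in the expansion of $g$ near $\eta=0$ transforms, under $\eta\propto\xi^{\theta}$, into $\xi^{\theta(\sigma+2)}=\xi^{\sigma_2+2}$ because $\theta(\sigma+2)=\frac{\sigma_1+2}{2}\cdot\frac{2(\sigma_2+2)}{\sigma_1+2}=\sigma_2+2$; matching the leading coefficient then produces the precise constant $\frac{m-p}{m(N+\sigma_2)(2+\sigma_2)}$, which I would confirm by a direct asymptotic match in the profile ODE for~\eqref{eq1.gen}.

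The hard part will be the coefficient bookkeeping in the local expansion~\eqref{beh.Q1}: pulling the constant $c$ in the profile of~\eqref{eq1.wei} through the rescalings~\eqref{change1.bis} and checking it yields exactly $\frac{m-p}{m(N+\sigma_2)(2+\sigma_2)}$ requires care, and the cleanest route is probably to bypass the transformed constant entirely and instead substitute the ansatz $f(\xi)\sim[D-c\,\xi^{\sigma_2+2}]^{1/(m-p)}$ directly into the radial profile equation for Eq.~\eqref{eq1.gen} near $\xi=0$, reading off $c$ from the balance of the leading-order diffusion and reaction terms. I expect the dimension $N$ to reappear here exactly as in the factor $(N+\sigma_2)$, consistent with treating $N$ as a real parameter per the standing convention, and I would use this direct verification as an independent check that the transformation has been undone correctly.
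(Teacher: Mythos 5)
Your proposal is correct and follows essentially the same route as the paper: apply the transformation \eqref{change1}--\eqref{change1.bis} to the unique compactly supported forward self-similar solution of Eq.~\eqref{eq1.wei} (the relevant reference is \cite{IMS21}), check that $1\leq p<1-\sigma(m-1)/2$ pulls back to $p<p_c(\sigma_1,\sigma_2)$, and use $\theta(\sigma+2)=\sigma_2+2$ together with the rescaling constants to convert the exponents and the local expansion into \eqref{beh.Q1}. Your suggested independent check of the coefficient by substituting the ansatz into the profile ODE is a sensible addition, but the paper simply carries the constant through the change of variables as you also outline.
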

\begin{proof}
Our starting point is the statement of \cite[Theorem 1.1]{IMS21}, ensuring that there exists a unique radially symmetric self-similar solution in the form \eqref{forward} to Eq. \eqref{eq1.wei} having a compactly supported profile, with self-similar exponents
$$
\overline{\alpha}=-\frac{\sigma+2}{\sigma(m-1)+2(p-1)}, \qquad \overline{\beta}=-\frac{m-p}{\sigma(m-1)+2(p-1)}
$$
and local behavior as $\xi\to0$ given by
\begin{equation}\label{interm9}
\overline{f}(\overline{\xi})\sim\left[D(\sigma)-\frac{m-p}{m(\overline{N}+\sigma)(\sigma+2)}\overline{\xi}^{\sigma+2}\right]^{1/(m-p)},
\end{equation}
where we use the notation with bar for the variables related to Eq. \eqref{eq1.wei}, provided that $1\leq p<1-\sigma(m-1)/2$. We apply our transformation \eqref{change1}-\eqref{change1.bis} and we infer from the value of $\sigma$ in \eqref{change1} that the self-similar exponents are mapped to
$$
\alpha=-\frac{\sigma+2}{L(\sigma_1,\sigma_2)}, \qquad \beta=-\frac{m-p}{L(\sigma_1,\sigma_2)}
$$
and the limiting value of $p$ for the solution to exist changes into
$$
p_{\rm max}=1-\frac{\sigma(m-1)}{2}=\frac{m\sigma_1-(m-1)\sigma_2+2}{\sigma_1+2}=p_c(\sigma_1,\sigma_2).
$$
Thus, the forward self-similar solution with compact support exists provided $1\leq p<p_c(\sigma_1,\sigma_2)$, as stated. Notice that \eqref{cond.exp1} gives
$$
p_c(\sigma_1,\sigma_2)-1=\frac{(m-1)(\sigma_1-\sigma_2)}{\sigma_1+2}>0, \qquad m-p_c(\sigma_1,\sigma_2)=\frac{(m-1)(\sigma_2+2)}{\sigma_1+2}>0,
$$
whence $p_c(\sigma_1,\sigma_2)\in(1,m)$. With respect to the local behavior as $\xi\to0$, we apply \eqref{change1} and \eqref{change1.bis} to \eqref{interm9}. In particular, we get
\begin{equation*}
\begin{split}
\frac{m-p}{m(\overline{N}+\sigma)(\sigma+2)}&\overline{\xi}^{\sigma+2}=\frac{m-p}{m(\overline{N}+\sigma)(\sigma+2)}(s\tau^{-\overline{\beta}})^{\sigma+2}\\
&=\frac{(m-p)(\sigma_1+2)^2}{4m(N+\sigma_2)(\sigma_2+2)}\left(\theta^{-2/(\sigma+2)}r^{\theta}\theta^{-2\sigma\overline{\beta}/(\sigma+2)}t^{-\overline{\beta}}\right)^{\sigma+2}\\
&=\frac{m-p}{m(N+\sigma_2)(\sigma_2+2)}r^{\theta(\sigma+2)}t^{-\overline{\beta}(\sigma+2)}\theta^{-2\sigma\overline{\beta}}\\
&=\frac{m-p}{m(N+\sigma_2)(\sigma_2+2)}\theta^{-\beta(\sigma_2-\sigma_1)}(rt^{-\beta})^{\sigma_2+2}\\
&=\frac{m-p}{m(N+\sigma_2)(\sigma_2+2)}\xi^{\sigma_2+2},
\end{split}
\end{equation*}
where $\xi=|x|(\theta^{(\sigma_2-\sigma_1)/(\sigma_2+2)}t)^{-\beta}$. The proof is now complete.
\end{proof}
Let us remark here that this unique self-similar solution with compact support is global in time, but a close inspection of the classification of self-similar solutions in \cite{IMS21} plus an application of the transformation \eqref{change1} reveal that there are infinitely many self-similar solutions with the same exponents $\alpha$, $\beta$ and the same local behavior as $\xi\to0$ as in \eqref{interm9}, but with an increasing, unbounded behavior as $\xi\to\infty$. Furthermore, the unique compactly supported solution given by Theorem \ref{th1.psmall} suggests that compactly supported general solutions to Eq. \eqref{eq1.gen} should also exist globally in time (provided the comparison principle holds true, which is a difficult problem).

\medskip

\noindent \textbf{Case 2:} $\mathbf{L(\sigma_1,\sigma_2)>0}.$ This is the blow-up range, where self-similar solution in backward form \eqref{backward} are expected. We indeed have the following result:
\begin{theorem}\label{th1.plarge}
Let $m>1$, $1\leq p<m$ and assume that $\sigma_1>-2$, $\sigma_2>-2$ if $N\geq2$ or $\sigma_1\geq0$, $\sigma_2>-1$ if $N=1$ are such that $L(\sigma_1,\sigma_2)>0$.. Then there exist compactly supported, radially symmetric self-similar solutions to Eq. \eqref{eq1.gen} in the form \eqref{backward}, with positive self-similar exponents
$$
\alpha=\frac{\sigma+2}{L(\sigma_1,\sigma_2)}, \qquad \beta=\frac{m-p}{L(\sigma_1,\sigma_2)}.
$$
\end{theorem}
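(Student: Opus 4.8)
The plan is to repeat, almost verbatim, the strategy used for Theorem \ref{th1.psmall}, now feeding in the existence theory for backward self-similar solutions of the weighted equation \eqref{eq1.wei}. First I would invoke the classification obtained by the authors in \cite{IS19, IS21a, ILS22, IMS21} for Eq. \eqref{eq1.wei} in the range $1\leq p<m$: when $\sigma(m-1)+2(p-1)>0$, those works provide compactly supported, radially symmetric backward self-similar solutions
\[
w(s,\tau)=(T_0-\tau)^{-\overline{\alpha}}\,\overline{f}\big(|s|(T_0-\tau)^{\overline{\beta}}\big), \qquad \overline{\alpha}=\frac{\sigma+2}{\sigma(m-1)+2(p-1)}, \quad \overline{\beta}=\frac{m-p}{\sigma(m-1)+2(p-1)},
\]
with both exponents positive precisely because the common denominator is positive in this regime.

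Next I would apply the transformation \eqref{change1}-\eqref{change1.bis} to such a solution. Since this change of variable has $\delta=0$, it acts only on the spatial variable through $z=r^{\theta}$ and rescales time multiplicatively via $\tau=Ct$; writing $T_0-Ct=C(T-t)$ with $T=T_0/C$, the temporal factor becomes $(T_0-Ct)^{-\overline{\alpha}}=C^{-\overline{\alpha}}(T-t)^{-\overline{\alpha}}$, so the backward self-similar shape \eqref{backward} is preserved with unchanged temporal exponent $\alpha=\overline{\alpha}$. The self-similar variable $|s|(T_0-\tau)^{\overline{\beta}}=a\,r^{\theta}C^{\overline{\beta}}(T-t)^{\overline{\beta}}$ is proportional to $\big(r(T-t)^{\beta}\big)^{\theta}$ once we set $\beta=\overline{\beta}/\theta$, the factor $1/\theta$ being forced by the power substitution $z=r^{\theta}$. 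Undoing the composition as in the proof of Theorem \ref{th1.psmall} then yields a solution $u(x,t)=(T-t)^{-\alpha}F(|x|(T-t)^{\beta})$ to Eq. \eqref{eq1.gen}, with $F$ built from $\overline{f}$ up to the constants $a$ and $C$.

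It then remains to identify the exponents and to verify the admissibility of the parameters. Using $\theta=(\sigma_1+2)/2$, the identity $\sigma(m-1)+2(p-1)=2L(\sigma_1,\sigma_2)/(\sigma_1+2)$ recorded in Section \ref{sec.transf}, and $\sigma+2=2(\sigma_2+2)/(\sigma_1+2)$ from \eqref{change1}, I would compute
\[
\alpha=\overline{\alpha}=\frac{\sigma_2+2}{L(\sigma_1,\sigma_2)}, \qquad \beta=\frac{\overline{\beta}}{\theta}=\frac{m-p}{L(\sigma_1,\sigma_2)},
\]
both positive exactly when $L(\sigma_1,\sigma_2)>0$. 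Because $\sigma_1+2>0$, the sign equivalence $L(\sigma_1,\sigma_2)>0\Leftrightarrow\sigma(m-1)+2(p-1)>0$ shows that the hypothesis of the theorem coincides with the admissibility condition of the source results, and a check of the dimension map ($N\geq2$ gives $\overline{N}\geq2$, while $N=1$ with $\sigma_1\geq0$, $\sigma_2>-1$ gives $\overline{N}\geq1$ and $\sigma+\overline{N}>0$) confirms that the transformed parameters always fall within the range where the weighted theory applies; compact support of $\overline{f}$ transfers to $F$ through the monotone substitution $z=r^{\theta}$.

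The main obstacle I anticipate is not the bookkeeping of exponents, which is routine, but the collation of the existence theory for Eq. \eqref{eq1.wei}: backward self-similar solutions in the full regime $1\leq p<m$, $\sigma(m-1)+2(p-1)>0$ were established over several papers under differing restrictions on the dimension and on the sign of $\sigma$. The delicate point is therefore to confirm that the whole transformed range is genuinely covered by the union of those results, in particular for possibly non-integer $\overline{N}$ and for negative values $\sigma\in(-2(p-1)/(m-1),0)$, and that the local behavior and compact support claimed for $\overline{f}$ hold throughout that range so that they survive the transformation.
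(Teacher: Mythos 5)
Your proposal is correct and follows essentially the same route as the paper: the authors also obtain the result by applying the transformation \eqref{change1}--\eqref{change1.bis} to the compactly supported backward self-similar solutions of Eq. \eqref{eq1.wei} (they cite \cite[Theorem 1.2]{ILS22} for the whole range $L>0$, which settles the collation issue you flag), with the same exponent bookkeeping $\alpha=\overline{\alpha}$, $\beta=\overline{\beta}/\theta$. Your identity $\sigma(m-1)+2(p-1)=2L(\sigma_1,\sigma_2)/(\sigma_1+2)$ is the correct one (the paper's displayed denominator $\sigma_1+\sigma_2$ is a typo).
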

Together with this existence theorem, we can furthermore state a result classifying the local behavior in a neighborhood of the origin of the profiles $f(\xi)$, where $\xi=|x|(T-t)^{\beta}$. This depends on the magnitude of $\sigma_2$ with respect to $\sigma_1$ and will be made precise below. In what follows, by $C(m,N,p,\sigma_1,\sigma_2)$ we understand positive constants depending on the mentioned parameters.
\begin{theorem}\label{th2.plarge}
In the same conditions as in Theorem \ref{th1.plarge}, we have the following classification.
\begin{enumerate}
  \item There exists $K_0>0$ such that, if
\begin{equation}\label{interm10}
\sigma_2<\sigma_1+K_0\frac{\sigma_1+2}{2},
\end{equation}
then the self-similar profiles $f(\xi)$ present the following local behavior as $\xi\to0$:
\begin{equation}\label{beh1}
f(\xi)\sim\left\{\begin{array}{ll}\left[D+C(m,p,N,\sigma_1,\sigma_2)\xi^{\sigma_1+2}\right]^{1/(m-1)}, & {\rm if} \ \sigma_2\geq\sigma_1\\[1mm]
\left[D-\frac{m-p}{m(N+\sigma_2)(\sigma_2+2)}\xi^{\sigma_2+2}\right]^{1/(m-p)}, & {\rm if} \ -2<\sigma_2<\sigma_1\end{array}\right.
\end{equation}
  \item There exists at least a value $K^*\in(0,\infty)$ such that, if
\begin{equation}\label{interm11}
\sigma_2=\sigma_1+K^*\frac{\sigma_1+2}{2},
\end{equation}
then the self-similar profiles $f(\xi)$ present the following local behavior as $\xi\to0$:
\begin{equation}\label{beh2}
f(\xi)\sim C(m,N,p,\sigma_1,\sigma_2)\xi^{(\sigma_1+2)/(m-1)}.
\end{equation}
  \item There exists $K_1>0$ such that, if
\begin{equation}\label{interm12}
\sigma_2>\sigma_1+K_1\frac{\sigma_1+2}{2},
\end{equation}
then the self-similar profiles $f(\xi)$ present the following local behavior as $\xi\to0$:
\begin{equation}\label{beh3}
f(\xi)\sim C(m,N,p,\sigma_1,\sigma_2)\xi^{(\sigma_2+2)/(m-p)}.
\end{equation}
\end{enumerate}
\end{theorem}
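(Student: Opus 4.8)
The strategy is the same translation principle already used in Theorems~\ref{th.Fujita} and~\ref{th1.psmall}: I would import, through the main transformation \eqref{change1}-\eqref{change1.bis}, the classification of the local behavior as $\overline{\xi}\to0$ of the self-similar profiles of Eq.~\eqref{eq1.wei} in the range $1\leq p<m$ with $L>0$, established by the authors in \cite{IS19, IMS21, ILS22, IS23}. Recall from those works that the local behavior of the profile $\overline{f}(\overline{\xi})$ of the backward self-similar solution to Eq.~\eqref{eq1.wei} underlying Theorem~\ref{th1.plarge} is governed by the weight exponent $\sigma$, and that there exist transition values $\sigma^{(0)}\leq\sigma^{(*)}\leq\sigma^{(1)}$ with the following trichotomy. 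For $\sigma<\sigma^{(0)}$ one has $\overline{f}(0)>0$, with a diffusion-dominated expansion $\overline{f}(\overline{\xi})\sim[\overline{D}+\overline{C}\,\overline{\xi}^{2}]^{1/(m-1)}$ when $\sigma\geq0$ (the weight being absent from the leading correction) and a reaction-dominated expansion $\overline{f}(\overline{\xi})\sim[\overline{D}-\frac{m-p}{m(\overline{N}+\sigma)(\sigma+2)}\overline{\xi}^{\sigma+2}]^{1/(m-p)}$ when $-2<\sigma<0$, which is precisely \eqref{interm9}. For $\sigma=\sigma^{(*)}$ the profile vanishes at the origin with the borderline porous-medium power $\overline{f}(\overline{\xi})\sim\overline{C}\,\overline{\xi}^{2/(m-1)}$, and for $\sigma>\sigma^{(1)}$ it vanishes with the reaction power $\overline{f}(\overline{\xi})\sim\overline{C}\,\overline{\xi}^{(\sigma+2)/(m-p)}$.

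The second step is a bookkeeping translation identical in spirit to the end of the proof of Theorem~\ref{th1.psmall}. Since $\delta=0$ in \eqref{change1}, the profiles coincide up to a multiplicative constant and the change of argument $\overline{\xi}=aC^{\overline{\beta}}\xi^{(\sigma_1+2)/2}$, so that $\overline{f}(0)>0$ is equivalent to $f(0)>0$ and any leading power $\overline{\xi}^{\gamma}$ is carried into $\xi^{\gamma(\sigma_1+2)/2}$. Recalling $\sigma=2(\sigma_2-\sigma_1)/(\sigma_1+2)$, the conditions on $\sigma$ rewrite as $\sigma_2=\sigma_1+\sigma(\sigma_1+2)/2$, whence setting $K_0=\sigma^{(0)}$, $K^*=\sigma^{(*)}$ and $K_1=\sigma^{(1)}$ (all positive, since the branch $\sigma\geq0$ already sits in the first regime) turns the three ranges into \eqref{interm10}, \eqref{interm11} and \eqref{interm12}. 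For the exponents I would compute $\frac{2}{m-1}\cdot\frac{\sigma_1+2}{2}=\frac{\sigma_1+2}{m-1}$, giving \eqref{beh2}, and, using the identities $(\sigma+2)(\sigma_1+2)/2=\sigma_2+2$ and $(\overline{N}+\sigma)(\sigma+2)=4(N+\sigma_2)(\sigma_2+2)/(\sigma_1+2)^{2}$, obtain $\frac{\sigma+2}{m-p}\cdot\frac{\sigma_1+2}{2}=\frac{\sigma_2+2}{m-p}$, giving \eqref{beh3}. For case~(1), the branch $\sigma\geq0$ maps $\overline{\xi}^{2}$ into $\xi^{\sigma_1+2}$ and produces the first line of \eqref{beh1} with a constant $C(m,N,p,\sigma_1,\sigma_2)$ absorbing the scaling factors, while the branch $-2<\sigma<0$ is handled by the very computation already displayed at the end of the proof of Theorem~\ref{th1.psmall}, where $\frac{m-p}{m(\overline{N}+\sigma)(\sigma+2)}\overline{\xi}^{\sigma+2}=\frac{m-p}{m(N+\sigma_2)(\sigma_2+2)}\xi^{\sigma_2+2}$ turns \eqref{interm9} into the second line of \eqref{beh1}. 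I would remark that this last expansion is the same for forward and backward profiles, since near the origin it arises from the balance between the diffusion and the reaction terms alone, which do not involve the self-similar exponents $\alpha,\beta$.

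The genuinely hard part lies entirely in the first step, that is, in the classification for Eq.~\eqref{eq1.wei} rather than in its transfer. The critical value $\sigma^{(*)}$ at which the borderline power $\overline{\xi}^{2/(m-1)}$ occurs is not given by a closed formula but is selected by a global connection (shooting) argument on the profile ODE, marking the bifurcation from profiles positive at the origin to profiles vanishing there; this is precisely why the statement only asserts the \emph{existence} of $K^{*}$ and records $K_0$ and $K_1$ as the (a priori possibly distinct) thresholds bounding the two power-law regimes, rather than a single sharp value. Once the trichotomy is granted from the references, the remainder is the routine transfer sketched above.
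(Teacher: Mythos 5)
Your proposal follows essentially the same route as the paper: the authors likewise import the trichotomy of local behaviors from \cite[Theorem 1.3]{ILS22} (with thresholds $\sigma_0$, $\sigma^*$, $\sigma_1$ in the variable $\sigma$ translating into the conditions \eqref{interm10}--\eqref{interm12} via $\sigma_2=\sigma_1+\sigma(\sigma_1+2)/2$) and then transfer the powers $\overline{\xi}^{2}$, $\overline{\xi}^{2/(m-1)}$, $\overline{\xi}^{(\sigma+2)/(m-p)}$ through \eqref{change1}--\eqref{change1.bis} exactly as you compute. Your additional treatment of the branch $-2<\sigma<0$ by reusing the computation from Theorem \ref{th1.psmall} matches the intent of \eqref{beh1}, so the argument is correct and essentially identical to the paper's.
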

Notice that the previous exponents in the local behaviors as $\xi\to0$ are coherent with the ones coming from the non-homogeneous porous medium equation without reaction, as shown in \cite{RV06}, where the typical exponent $\xi^{2}$ in the Barenblatt solutions is replaced by $\xi^{2+\sigma_1}$.
\begin{proof}[Proof of Theorems \ref{th1.plarge} and \ref{th2.plarge}]
The outcome of Theorem \ref{th1.plarge} follows immediately by applying our transformation \eqref{change1} to the self-similar solutions to Eq. \eqref{eq1.wei} given in \cite[Theorem 1.2]{ILS22}. With respect to the local behavior as $\xi\to0$, we translate to our solutions the classification established in \cite[Theorem 1.3]{ILS22}, which gives the local behavior in a neighborhood of the origin of radially symmetric self-similar solutions to Eq. \eqref{eq1.wei} with respect to the magnitude of $\sigma$. Indeed, the conditions related to the intervals of $\sigma$ in the statement of \cite[Theorem 1.3]{ILS22} (that is, $\sigma<\sigma_0$, $\sigma=\sigma^*$, respectively $\sigma>\sigma_1$) are translated into \eqref{interm10}, \eqref{interm11}, respectively \eqref{interm12}. With respect to the local behavior as $\xi\to0$, we take the three possible behaviors given in the Introduction of \cite{ILS22} and, retaking the convention of denoting by bar the variables related to Eq. \eqref{eq1.wei} and by $K$ a generic positive constant (which may change from one line to another), we notice that
$$
\overline{\xi}^2=K\left(ar^{\theta}t^{-\overline{\beta}}\right)^2=K\left(rt^{-\overline{\beta}/\theta}\right)^{2\theta}=K\left(rt^{-\beta}\right)^{\sigma_1+2},
$$
in the first case, leading to \eqref{beh1} in the case $\sigma_2\geq\sigma_1$, while
$$
\overline{\xi}^{2/(m-1)}=K\left(ar^{\theta}t^{-\overline{\beta}}\right)^{2/(m-1)}=K\left(rt^{-\beta}\right)^{(\sigma_1+2)/(m-1)},
$$
in the second case, which leads to \eqref{beh2}, and finally
$$
\overline{\xi}^{(\sigma+2)/(m-p)}=K\left(ar^{\theta}t^{-\overline{\beta}}\right)^{(\sigma+2)/(m-p)}=K\left(rt^{-\beta}\right)^{(\sigma+2)\theta/(m-p)}=K\left(rt^{-\beta}\right)^{(\sigma_2+2)/(m-p)},
$$
taking into account the expressions of $\theta$ and $\sigma$ given in \eqref{change1}. The latter thus leads to \eqref{beh3}.
\end{proof}

\noindent \textbf{Remark. An explicit solution for $p=1$}. The transformed equation Eq. \eqref{eq2.wei} allows for an interesting explicit solution for $p=1$ and in dimension $\overline{N}=1$, identified in \cite{IS19}, namely, in self-similar variable, 
\begin{equation}\label{explicit1}
\overline{f}(\overline{\xi})=\overline{\xi}^{2/(m-1)}\left[\frac{m-1}{2m(m+1)}-B\overline{\xi}^{\sigma}\right]^{1/(m-1)}_{+}, \qquad B=\frac{(m-1)^2}{m(\sigma+2)(m\sigma+m+1)},
\end{equation}
with $\sigma=\sqrt{2(m+1)}$. We can apply our second transformation \eqref{change2}-\eqref{exp2} to the solution whose profile is given in \eqref{explicit1} to obtain an explicit solution to Eq. \eqref{eq1.gen} when the exponents and dimension satisfy the following conditions
$$
p=1, \qquad N=2+\frac{m(\sigma_1+2)}{m+1}, \qquad \sigma_2=\sigma_1+\frac{m\sqrt{2(m+1)}(\sigma_1+2)}{m+1}.
$$
This explicit solution is self-similar and has the following form
\begin{equation}\label{explicit1.bis}
\begin{split}
u(x,t)&=(T-t)^{-1/(m-1)}a^{2/(m-1)}|x|^{(\sigma_1+2)/(m-1)}\\&\times\left[\frac{m-1}{2m(m+1)}-Ba^{\sqrt{2(m+1)}}(T-t)|x|^{\sigma_2-\sigma_1}\right]_{+}^{1/(m-1)},
\end{split}
\end{equation}
where $a$ is, as usual, defined in \eqref{change1.bis}. Notice that the solution defined in \eqref{explicit1.bis} presents the typical local behavior \eqref{beh2} as $|x|\to0$ and an interface.

\medskip

\noindent \textbf{Case 3:} $\mathbf{L(\sigma_1,\sigma_2)=0}.$ We are left with this critical case, in which it is easy to show (by simply inserting the ansatz \eqref{forward} or \eqref{backward} in Eq. \eqref{eq1.gen} and equate the powers of $t$, respectively $T-t$ in the three terms of Eq. \eqref{eq1.gen} to obtain an incompatible system) that Eq. \eqref{eq1.gen} does not admit any self-similar solutions in either forward or backward form. Nevertheless, we prove that there are exponential self-similar solutions.
\begin{theorem}\label{th1.pequal}
Assume that $\sigma_1>-2$ in dimension $N\geq2$ and $\sigma_1\geq0$ in dimension $N=1$ and that
$$
\sigma_2=\frac{\sigma_1(m-p)-2(p-1)}{m-1}.
$$
Then there exists a unique pair of exponents
\begin{equation}\label{interm13}
\alpha^*, \ \beta^*\in(0,\infty), \qquad \frac{\alpha^*}{\beta^*}=\frac{\sigma_1+2}{m-1},
\end{equation}
such that there exists a one-parameter family of compactly supported self-similar solutions in exponential form \eqref{exponential} to Eq. \eqref{eq1.gen}, whose profiles satisfy $f(0)>0$ and the following local behavior as $\xi\to0$:
\begin{equation}\label{beh.Q1.exp}
f(\xi)\sim\left[K-\frac{(m-1)^2}{m(\sigma_1+2)[N(m-1)+\sigma_1(m-p)-2(p-1)]}\xi^{(\sigma_1+2)(m-p)/(m-1)}\right]^{1/(m-p)}.
\end{equation}
Moreover, for any $\alpha\in(\alpha^*,\infty)$ and corresponding $\beta\in(0,\infty)$ connected to $\alpha$ by the ratio in \eqref{interm13}, there exists a one-parameter family of self-similar solutions in exponential form \eqref{exponential} to Eq. \eqref{eq1.gen} such that they behave as in \eqref{beh.Q1.exp} as $\xi\to0$ and have the following behavior as $\xi\to\infty$
\begin{equation}\label{beh.infty}
f(\xi)\sim C(m,p,\alpha,\sigma_1)\xi^{(\sigma_1+2)/(m-1)}(\log\,\xi)^{-1/(p-1)},
\end{equation}
where $C(m,p,\alpha,\sigma_1)>0$ is a positive constant (that can be made explicit).
\end{theorem}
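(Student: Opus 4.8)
The plan is to reduce the whole statement to the already established theory of eternal (exponential) self-similar solutions for the weighted equation \eqref{eq1.wei} in the critical case $L=0$, and then transport the result back through the transformation \eqref{change1}-\eqref{change1.bis}. The first step is to observe that the hypothesis $\sigma_2=[\sigma_1(m-p)-2(p-1)]/(m-1)$ is exactly the condition $L(\sigma_1,\sigma_2)=0$ for the constant defined in \eqref{const.L2}. Using the identity $\sigma(m-1)+2(p-1)=2L(\sigma_1,\sigma_2)/(\sigma_1+\sigma_2)$ already recorded in Subsection \ref{subsec.other}, this is equivalent to the transformed exponent $\sigma=2(\sigma_2-\sigma_1)/(\sigma_1+2)$ satisfying $L=\sigma(m-1)+2(p-1)=0$, that is $\sigma=-2(p-1)/(m-1)\in(-2,0]$. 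I would then check that this $\sigma$, together with the dimension $\overline{N}$ from \eqref{change1}, lies in the admissible range of the cited results (in particular $\overline{N}\geq 2$ when $N\geq 2$, and $\overline{N}\geq1$ with $\sigma>-\overline{N}$ when $N=1$ under $\sigma_1\geq0$), so that the mapping applies.

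The second step is to invoke the existence theory for eternal self-similar solutions of \eqref{eq1.wei} with $L=0$ from \cite{IS22a, ILS22b}. This furnishes a unique threshold exponent $\overline{\alpha}^*$ (with companion $\overline{\beta}^*$ fixed by the scaling ratio $\overline{\alpha}/\overline{\beta}=2/(m-1)$) for which a one-parameter family of compactly supported profiles $g$ exists; for every $\overline{\alpha}>\overline{\alpha}^*$ a one-parameter family of profiles with a prescribed logarithmically corrected growth as $s\to\infty$; and the common local behaviour $g(s)\sim[K-\tfrac{m-p}{m(\overline{N}+\sigma)(\sigma+2)}s^{\sigma+2}]^{1/(m-p)}$ as $s\to0$.

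The third step is the translation. Writing the eternal solution of \eqref{eq1.wei} as $w(s,\tau)=e^{\overline{\alpha}\tau}g(se^{-\overline{\beta}\tau})$ and undoing \eqref{change1}-\eqref{change1.bis} via $s=ar^{\theta}$, $\tau=Ct$, $\theta=(\sigma_1+2)/2$, one obtains $u(r,t)=e^{\alpha t}f(re^{-\beta t})$ with $\alpha=\overline{\alpha}C$, $\beta=\overline{\beta}C/\theta$ and $f(\xi)=g(a\xi^{\theta})$. The ratio then becomes $\alpha/\beta=\theta\,\overline{\alpha}/\overline{\beta}=(\sigma_1+2)/(m-1)$, confirming \eqref{interm13}; and since \eqref{change1} acts only on the spatial variable together with a rescaling of time, it preserves both compact support and the eternal self-similar structure, so the strictly increasing map $\overline{\alpha}\mapsto\alpha=\overline{\alpha}C$ transports the threshold $\overline{\alpha}^*$ to a unique $\alpha^*$ and the whole dichotomy (compact support at $\alpha^*$, growth above it) to the two assertions of the theorem. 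Substituting $s=a\xi^{\theta}$ into the local behaviour of $g$ and using $a^{\sigma+2}=\theta^{-2}$ together with $\theta(\sigma+2)=(\sigma_1+2)(m-p)/(m-1)$ and the reduced value of $\overline{N}+\sigma$ yields, after simplification, \eqref{beh.Q1.exp} with precisely the stated constant; while using $2\theta=\sigma_1+2$ and $\log(a\xi^{\theta})\sim\theta\log\xi$ as $\xi\to\infty$ yields \eqref{beh.infty}.

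I expect the real work to be of two kinds. The conceptual crux — that $L(\sigma_1,\sigma_2)=0$ is exactly the critical connection mapped onto $L=0$ for \eqref{eq1.wei}, so that eternal solutions are the correct object — has already been flagged in Subsection \ref{subsec.other} and is essentially free. The remaining effort is bookkeeping: matching the explicit constant in \eqref{beh.Q1.exp} through the routine but unforgiving algebra sketched above, and confirming that the admissible parameter ranges of the cited $L=0$ results genuinely cover the transformed pair $(\overline{N},\sigma)$, with particular attention to the borderline dimension $N=1$.
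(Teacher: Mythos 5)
Your proposal is correct and follows essentially the same route as the paper: identify the hypothesis on $\sigma_2$ with $L(\sigma_1,\sigma_2)=0$, hence $\sigma=-2(p-1)/(m-1)$ after the transformation \eqref{change1}--\eqref{change1.bis}, invoke the eternal-solution classification of \cite{ILS22b} for Eq.~\eqref{eq2.wei}, and transport exponents, compact support, and the two local behaviors back via $f(\xi)=g(a\xi^{\theta})$, $a^{\sigma+2}=\theta^{-2}$. The only cosmetic difference is that the paper obtains the ratio $\alpha/\beta=(\sigma_1+2)/(m-1)$ directly from the ansatz \eqref{exponential} rather than by transporting $\overline{\alpha}/\overline{\beta}=2/(m-1)$, and the identity you quote should read $\sigma(m-1)+2(p-1)=2L(\sigma_1,\sigma_2)/(\sigma_1+2)$ (a typo already present in the paper, immaterial to the argument).
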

\begin{proof}
The proof follows from merging our transformation \eqref{change1} with the results in \cite[Theorem 1.1]{ILS22b}. First of all, the connection between exponents $\alpha$ and $\beta$ given in \eqref{interm13} is a direct consequence of the ansatz \eqref{exponential}. Indeed, if we compute every term in Eq. \eqref{eq1.gen} by replacing $|x|=\xi e^{\beta t}$ and $u(x,t)=e^{\alpha t}f(\xi)$, and we only look at the factors involving the time variable, we readily get the equalities
$$
m\alpha-2\beta=\alpha+\sigma_1\beta=p\alpha+\sigma_2\beta,
$$
which gives $\alpha=(\sigma_1+2)\beta/(m-1)$, as claimed in \eqref{interm13}. Let us now look at \cite[Theorem 1.1]{ILS22b}. On the one hand, it applies to Eq. \eqref{eq2.wei} when $\sigma=-2(p-1)/(m-1)$, which gives
$$
\sigma_2=\frac{\sigma_1(m-p)-2(p-1)}{m-1}, \qquad {\rm or \ equivalently} \qquad L(\sigma_1,\sigma_2)=0,
$$
as desired. On the other hand, \cite[Theorem 1.1]{ILS22b} states the uniqueness of a pair of self-similar exponents and a one-parameter family of compactly supported solutions in exponential form to Eq. \eqref{eq2.wei} whose profiles have the local behavior
\begin{equation}\label{interm14}
\overline{f}(\overline{\xi})\sim\left[K-\frac{(m-1)^2}{2m[\overline{N}(m-1)-2(p-1)]}\overline{\xi}^{2(m-p)/(m-1)}\right]^{1/(m-p)}, \qquad {\rm as} \ \overline{\xi}\to0.
\end{equation}
We next show that, by taking into account that $\overline{\xi}=a\xi^{\theta}$ and inserting into \eqref{interm14} the expressions for $\overline{N}$ and $\theta$ given in \eqref{change1} and also the constant $a$ in \eqref{change1.bis}, easy calculations lead to the local behavior \eqref{beh.Q1.exp}. More precisely,
\begin{equation*}
\begin{split}
\overline{\xi}^{2(m-p)/(m-1)}&=a^{2(m-p)/(m-1)}\xi^{2\theta(m-p)/(m-1)}=\theta^{-2}\xi^{(\sigma_1+2)(m-p)/(m-1)}\\
&=\frac{4}{(\sigma_1+2)^2}\xi^{(\sigma_1+2)(m-p)/(m-1)},
\end{split}
\end{equation*}
while
$$
\frac{(m-1)^2}{2m[\overline{N}(m-1)-2(p-1)]}=\frac{(m-1)^2(\sigma_1+2)}{4m[N(m-1)+\sigma_1(m-p)-2(p-1)]}.
$$
By multiplying the previous two equalities and inserting their outcome into \eqref{interm14}, we arrive to \eqref{beh.Q1.exp}. In the same way, starting from the local behavior at infinity of the unbounded self-similar profiles given in \cite[Theorem 1.1]{ILS22b}, that is,
$$
\overline{f}(\overline{\xi})\sim \overline{C}(m,p,\overline{\alpha})\overline{\xi}^{2/(m-1)}(\log\,\overline{\xi})^{-1/(p-1)}, \qquad {\rm as} \ \overline{\xi}\to\infty
$$
and applying the same transformations as in the previous considerations, we get the behavior \eqref{beh.infty}.
\end{proof}

\noindent \textbf{Remark}. Observe that the exponential self-similar solutions live for any $t\in(-\infty,\infty)$, thus they are also known in literature as \emph{eternal solutions}. The rescaling
\begin{equation}\label{resc}
u_{\lambda}(x,t)=\lambda u\left(\lambda^{-(m-1)/(\sigma_1+2)}x,t\right)
\end{equation}
maps solutions to Eq. \eqref{eq1.gen} with $L(\sigma_1,\sigma_2)=0$ into other solutions of the same equation. At the level of self-similar solutions, if moreover we let $\lambda=e^{\alpha t_0}$ for some $t_0\in\real$, \eqref{resc} writes as
$$
u_{\lambda}(x,t)=e^{\alpha(t+t_0)}f(|x|e^{-\beta(t+t_0)}),
$$
thus the rescaled versions of an exponential self-similar solution are just translations in time (either forward or backward, depending on whether $t_0>0$ or $t_0<0$). The existence of one-parameter families of exponential self-similar solutions in Theorem \ref{th1.pequal} has to be then understood as a uniqueness result modulo the rescaling \eqref{resc} which in fact gives the same solution delayed with different times.

\medskip

We are left with the limiting case $\sigma_2=-2$ (with $\sigma_1>-2$), which gives $\sigma=-2$ by applying the transformation \eqref{change1}. We then have
\begin{theorem}
Let $m>1$, $1<p<m$, $N\geq3$, $\sigma_1>-2$ and $\sigma_2=-2$. Then there exists a unique self-similar solution of the form
$$
u(x,t)=f(\xi), \qquad \xi=|x|t^{-1/(\sigma_1+2)},
$$
such that its profile $f(\xi)$ is compactly supported at some $\xi_0\in(0,\infty)$ in the sense that $f(\xi_0)=0$, $f(\xi)>0$ for any $\xi\in(0,\xi_0)$ and $(f^m)'(\xi_0)=0$, and has a logarithmic singularity at $\xi=0$
\begin{equation}\label{beh.Q1.lim}
f(\xi)\sim\left[-\frac{(m-p)(\sigma_1+2)^2}{4m(N-2)}\ln\,\xi+K\right]^{1/(m-p)}, \qquad {\rm as} \ \xi\to0.
\end{equation}
\end{theorem}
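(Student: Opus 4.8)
The plan is to reduce the statement to the already understood Hardy-type equation in transformed variables. Applying the main transformation \eqref{change1}--\eqref{change1.bis} with $\sigma_2=-2$ and $\sigma_1>-2$, one computes $\sigma=2(\sigma_2-\sigma_1)/(\sigma_1+2)=-2$ and $\overline{N}=2(N+\sigma_1)/(\sigma_1+2)$, so that radially symmetric self-similar solutions to Eq. \eqref{eq1.gen} are in one-to-one correspondence with radially symmetric self-similar solutions to the singular equation \eqref{eq2.wei} carrying the critical Hardy weight $\sigma=-2$. Since $N\geq3$ forces $\overline{N}>2$, and since the admissibility range $1\leq p<1-\sigma(m-1)/2$ of the forward-solution theory becomes exactly $1<p<m$ when $\sigma=-2$, I would invoke the existence and uniqueness of a compactly supported forward self-similar profile for \eqref{eq2.wei} at the critical Hardy value (the limit case of \cite[Theorem 1.1]{IMS21}), and then pull everything back through \eqref{change1}.

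The exponents are pinned down by specializing the formulas of Theorem \ref{th1.psmall} to $\sigma_2=-2$. From \eqref{const.L2} one gets $L(\sigma_1,-2)=-(m-p)(\sigma_1+2)<0$, and since the transformed exponent is $\sigma=-2$ the factor $\sigma+2$ vanishes, so $\alpha=-(\sigma+2)/L(\sigma_1,\sigma_2)=0$, while $\beta=-(m-p)/L(\sigma_1,-2)=1/(\sigma_1+2)$. Hence $u(x,t)=t^{\alpha}f(|x|t^{-\beta})=f(|x|t^{-1/(\sigma_1+2)})$, exactly the claimed form. Positivity on $(0,\xi_0)$, the interface conditions $f(\xi_0)=0$ and $(f^m)'(\xi_0)=0$, and uniqueness all transfer verbatim, because $r\mapsto s=ar^{(\sigma_1+2)/2}$ is an increasing diffeomorphism of $(0,\infty)$ and \eqref{change1} preserves both the nature of the interface and the bijection between radial self-similar solutions of the two equations.

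The only genuinely new point is the logarithmic singularity at the origin, which I would read off as the degeneration of the power-type behavior \eqref{interm9} as $\sigma\to-2$: writing $\overline{\xi}^{\sigma+2}=1+(\sigma+2)\ln\overline{\xi}+o(\sigma+2)$ and absorbing the diverging constant into $D$, the bracket in \eqref{interm9} passes to $\overline{f}(\overline{\xi})\sim[\,K-\tfrac{m-p}{m(\overline{N}-2)}\ln\overline{\xi}\,]^{1/(m-p)}$. Rigorously this must come from the profile ODE of \eqref{eq2.wei}: near $\overline{\xi}=0$ the dominant balance in $g''+\tfrac{\overline{N}-1}{\overline{\xi}}g'+\overline{\xi}^{-2}g^{p/m}\approx0$ (with $g=\overline{f}^m$) forces $g\sim c\,(\ln(1/\overline{\xi}))^{m/(m-p)}$, and matching the three leading terms proportional to $\overline{\xi}^{-2}(\ln(1/\overline{\xi}))^{p/(m-p)}$ gives $c^{(m-p)/m}=\tfrac{m-p}{m(\overline{N}-2)}$. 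Transporting this through $\overline{\xi}=a\xi^{\theta}$, $\theta=(\sigma_1+2)/2$, so that $\ln(1/\overline{\xi})\sim\theta\ln(1/\xi)$, and using $\overline{N}-2=2(N-2)/(\sigma_1+2)$, converts the coefficient $\tfrac{m-p}{m(\overline{N}-2)}\theta$ into $\tfrac{(m-p)(\sigma_1+2)^2}{4m(N-2)}$, which is precisely the constant in \eqref{beh.Q1.lim}; here $N>2$ (guaranteed by $N\geq3$) is exactly what makes this coefficient positive and the behavior meaningful.

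The main obstacle is securing the input existence/uniqueness result at the exact value $\sigma=-2$: the theory underlying \eqref{interm9} is developed for $\sigma>-2$, whereas at $\sigma=-2$ the Hardy term $\overline{\xi}^{-2}\overline{f}^p$ is genuinely singular, so neither the existence/uniqueness nor the logarithmic profile is a formal corollary of the $\sigma>-2$ analysis. I would therefore either appeal to a dedicated study of \eqref{eq2.wei} with the critical Hardy weight in the range $1<p<m$, or redo the shooting/phase-plane argument directly in this singular setting, the delicate steps being (i) constructing the one-parameter orbit emanating from the logarithmic singularity at $\overline{\xi}=0$ and controlling its monotone decay, and (ii) showing that this orbit reaches the free boundary with the correct contact order, which is what yields both the interface structure and uniqueness.
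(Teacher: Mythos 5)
Your proposal follows essentially the same route as the paper: apply the transformation \eqref{change1}--\eqref{change1.bis} to land on Eq.~\eqref{eq2.wei} with the critical Hardy weight $\sigma=-2$ in dimension $\overline{N}=2(N+\sigma_1)/(\sigma_1+2)>2$, import the unique compactly supported profile with logarithmic singularity, and transport the self-similarity exponent $1/(\sigma_1+2)$ and the coefficient $(m-p)(\sigma_1+2)^2/(4m(N-2))$ back exactly as you compute. The ``main obstacle'' you correctly flag --- that existence, uniqueness and the logarithmic local behavior at $\sigma=-2$ are not a formal corollary of the $\sigma>-2$ theory behind \eqref{interm9} and require a dedicated study of the Hardy-critical equation in the range $1<p<m$ --- is resolved in the paper precisely the way you suggest, by citing \cite[Theorem 1.1]{IS23} as the input rather than by a limiting argument, so your identification of where the real work lies is accurate.
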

\begin{proof}
We readily notice that, by applying the transformation \eqref{change1}-\eqref{change1.bis} to Eq. \eqref{eq1.gen} with $\sigma_2=-2$, we arrive to Eq. \eqref{eq2.wei} with $\sigma=-2$, independent of the value of $\sigma_1>-2$. We next infer from \cite[Theorem 1.1]{IS23} that Eq. \eqref{eq2.wei}, posed in dimension $\overline{N}>2$ (even if taken as a real parameter, as it follows by an inspection of its proof) admits a unique compactly supported self-similar solution in the form
$$
w(s,\tau)=\overline{f}(\overline{\xi}), \qquad \overline{\xi}=|s|\tau^{-1/2}
$$
and such that
\begin{equation}\label{interm15}
\overline{f}(\overline{\xi})\sim\left[-\frac{m-p}{m(\overline{N}-2)}\ln\,\overline{\xi}+K\right]^{1/(m-p)}, \qquad {\rm as} \ \overline{\xi}\to0.
\end{equation}
Observe first that, if $N\geq3$, then
$$
\overline{N}=\frac{2(N+\sigma_1)}{\sigma_1+2}\geq\frac{2(\sigma_1+3)}{\sigma_1+2}=2+\frac{2}{\sigma_1+2}>2,
$$
hence the previous statement applies to any dimension $N\geq3$. We then notice that, modulo constants that do not play any role when taking logarithms, we have $\overline{\xi}\sim\xi^{\theta}$, hence, by applying \eqref{change1} to the local behavior in \eqref{interm15} and noticing that
$$
\frac{(m-p)\theta}{m(\overline{N}-2)}=\frac{(m-p)(\sigma_1+2)^2}{4m(N-2)},
$$
we obtain the local behavior \eqref{beh.Q1.lim}. Moreover, with respect to the self-similar exponent, we have
$$
|s|\tau^{-1/2}=ar^{\theta}\tau^{-1/2}=a(r\tau^{-1/2\theta})^{\theta}=aC^{-1/2}(rt^{-1/(\sigma_1+2)})^{\theta},
$$
hence the new self-similarity exponent in Eq. \eqref{eq1.gen} becomes $1/(\sigma_1+2)$, as claimed.
\end{proof}
Notice also that the singularity at $\xi=0$ is integrable, hence this solution can be considered as a weak solution in $L^1$ (and any other $L^p$ space with $1<p<\infty$), despite being unbounded. We refer the reader to our work \cite{IS23} for similar considerations and omit the details here.

\section{The special case $\sigma_1=\sigma_2$. Improved results}\label{sec.sigmaequal}

Throughout this section, we will consider $\sigma_1=\sigma_2\in(-2,\infty)$. As explained in the Introduction, this specific case has been considered in a number of both physical and mathematical works, and a number of results on it are now available. Moreover, all the results in previous sections also hold true. However, the fact that Eq. \eqref{eq1.gen} is mapped through the change of variable \eqref{change1} into the well studied homogeneous equation Eq. \eqref{eq1.hom} allows us to extract more information from it. To this end, let us introduce a number of critical exponents. The first one, known just as \emph{critical exponent}, is given by
\begin{equation}\label{pcsigma}
p_c(\sigma_2):=\frac{m(N+\sigma_2)}{N-2}, \qquad {\rm provided} \ N>2,
\end{equation}
while the second one is known as the \emph{Sobolev exponent}
\begin{equation}\label{pssigma}
p_s(\sigma_2):=\frac{m(N+2\sigma_2+2)}{N-2}, \qquad {\rm provided} \ N>2.
\end{equation}
The third critical exponent has been identified for the first time by Joseph and Lundgren in relation with some quasilinear elliptic problems in \cite{JL} and will be called the \emph{Joseph-Lundgren exponent}, which in our case will have the form
\begin{equation}\label{pjlsigma}
p_{JL}(\sigma_2):=\frac{m[N^2-8N+4-2\sigma_2^2-2(N+2)\sigma_2+2(\sigma_2+2)\sqrt{(\sigma_2+2)(2N+\sigma_2-2)}]}{(N-2)(N-10-4\sigma_2)},
\end{equation}
which is defined in dimension $N>10+4\sigma_2$. Finally, a higher critical exponent will be considered, namely, the \emph{Lepin exponent}, which has been identified for the first time by Lepin \cite{Le90} for the case $m=1$. In our notation, will have the rather tedious expression
\begin{equation}\label{plsigma}
p_L(\sigma_2):=1+\frac{3m(\sigma_2+2)+\sqrt{L}}{2(N-10-4\sigma_2)}, \qquad {\rm provided} \ N>10+4\sigma_2,
\end{equation}
where
$$
L=4(m-1)^2(N-10-4\sigma_2)^2+2(m-1)(5m-4)(\sigma_2+2)(N-10-4\sigma_2)+9m^2(\sigma_2+2)^2.
$$
In all these four expressions above, we consider by convention the critical exponents equal to $+\infty$ in dimensions that are smaller than the ones considered in the definitions. We will give below a sequence of results showing how these critical exponents influence on the qualitative behavior of radially symmetric solutions to Eq. \eqref{eq1.gen}. The first theorem puts into evidence the influence of the critical and Sobolev exponents.
\begin{theorem}\label{th.plargem1}
Let $m>1$, $N\geq2$ and $\sigma_1=\sigma_2\in(-2,\infty)$ (or $N=1$ and $\sigma_1=\sigma_2\in(0,\infty)$) and let $p$ be such that $m<p<p_s(\sigma_2)$. Then the following properties hold true.
\begin{enumerate}
\item There exists a decreasing radially symmetric self-similar solution in backward form \eqref{backward} to Eq. \eqref{eq1.gen} (modulo the choice of $T>0$), with self-similar exponents
$$
\alpha=\frac{1}{p-1}, \qquad \beta=\frac{p-m}{(\sigma_2+2)(p-1)}.
$$
\item For $p>p_c(\sigma_2)$ (and not limited to $p_s(\sigma_2)$), there exists an explicit singular stationary solution to Eq. \eqref{eq1.gen}
\begin{equation}\label{stat.sol}
u(x,t)=K|x|^{-(\sigma_2+2)/(p-m)}, \qquad K=\left[\frac{m(N-2)(\sigma_2+2)(p-p_c(\sigma_2))}{(p-m)^2}\right]^{1/(p-m)}.
\end{equation}
\item Let $u$ be a radially symmetric solution to Eq. \eqref{eq1.gen} which blows up at a finite time $T\in(0,\infty)$. Then blow-up is \emph{complete} if $m<p\leq p_s(\sigma_2)$, that is, $u(t)\equiv\infty$ for any $t>T$. The same holds true even for $p>p_s(\sigma_2)$, provided that the blow-up of $u$ at time $t=T$ takes place in more points than the origin $x=0$.
\end{enumerate}
\end{theorem}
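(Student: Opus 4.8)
The common engine for all three parts is that, for $\sigma_1=\sigma_2$, the change of variable \eqref{change1}--\eqref{change1.bis} gives $\sigma=2(\sigma_2-\sigma_1)/(\sigma_1+2)=0$, and hence maps radial solutions of \eqref{eq1.gen} bijectively onto radial solutions of the homogeneous equation \eqref{eq1.hom} posed in the shifted dimension $\overline{N}=2(N+\sigma_2)/(\sigma_2+2)$, treated as a real parameter. A short computation shows that the critical exponents \eqref{pcsigma}--\eqref{plsigma} are exactly the images of the classical exponents of \eqref{eq1.hom} under $N\mapsto\overline{N}$; in particular $m\overline{N}/(\overline{N}-2)=p_c(\sigma_2)$ and $m(\overline{N}+2)/(\overline{N}-2)=p_s(\sigma_2)$. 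The strategy is therefore to import, for each part, the corresponding statement about \eqref{eq1.hom} in dimension $\overline{N}$ and to check that the transformation respects the structure involved (blow-up, monotonicity, stationarity, or completeness of blow-up).

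For \textbf{Part 1} I would start from the classical radially decreasing backward self-similar blow-up solution of \eqref{eq1.hom} in dimension $\overline{N}$, which exists for $m<p$ below the Sobolev exponent $m(\overline{N}+2)/(\overline{N}-2)$ (see \cite{S4}), written as $w(s,\tau)=(T_0-\tau)^{-\overline{\alpha}}g\bigl(s(T_0-\tau)^{-\overline{\beta}}\bigr)$ with $\overline{\alpha}=1/(p-1)$ and $\overline{\beta}=(p-m)/(2(p-1))$. Undoing \eqref{change1}--\eqref{change1.bis} with $\delta=0$ (so that $u=w$), $s=ar^{\theta}$, $\tau=Ct$, $\theta=(\sigma_2+2)/2$ and $T_0-\tau=C(T-t)$, the self-similar argument becomes a constant multiple of $\bigl(r(T-t)^{-\overline{\beta}/\theta}\bigr)^{\theta}$, so the resulting solution is again backward self-similar with $\alpha=\overline{\alpha}=1/(p-1)$ and $\beta=\overline{\beta}/\theta=(p-m)/((\sigma_2+2)(p-1))$, as stated. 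Monotone decrease is inherited because $z=r^{\theta}$ is increasing and $\delta=0$, and the admissible range $m<p<m(\overline{N}+2)/(\overline{N}-2)$ turns into $m<p<p_s(\sigma_2)$ by the exponent identity above.

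\textbf{Part 2} is an explicit verification that bypasses the transformation. Substituting $u(x,t)=K|x|^{-\gamma}$ into the stationary equation $\Delta u^m+|x|^{\sigma_2}u^p=0$ and using $\Delta|x|^{a}=a(a+N-2)|x|^{a-2}$, the two powers of $|x|$ agree precisely when $\gamma=(\sigma_2+2)/(p-m)$, and equating coefficients gives $K^{p-m}=m\gamma(N-2-m\gamma)$. Writing $N-2-m\gamma=(N-2)\bigl(p-p_c(\sigma_2)\bigr)/(p-m)$ recovers the constant in \eqref{stat.sol}, and the positivity of $K^{p-m}$ is equivalent to $p>p_c(\sigma_2)$, which is exactly the stated restriction.

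The heart of the theorem, and the \textbf{main obstacle}, is \textbf{Part 3}. The plan is to invoke the complete/incomplete blow-up dichotomy known for \eqref{eq1.hom} in dimension $\overline{N}$ --- complete blow-up throughout the subcritical and critical range $m<p\le m(\overline{N}+2)/(\overline{N}-2)$, and, in the supercritical range, completeness as soon as the blow-up set contains a point other than the origin (see \cite{S4, QS}) --- and to transport it through \eqref{change1}. The delicate issue is that completeness of blow-up is defined through an approximation scheme (for instance truncating the source $u^p$ by a bounded monotone nonlinearity and passing to the limit after $T$), so I must check that \eqref{change1} commutes with this scheme: since it acts only on the spatial variable, through the increasing map $z=r^{\theta}$, together with the linear time rescaling $\tau=Ct$, it carries the truncated problems for \eqref{eq1.gen} onto those for \eqref{eq1.hom} and maps their limits to limits, so that $u\equiv+\infty$ after $T$ holds for one equation exactly when it holds for the other. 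The second point requiring care is the identification of blow-up sets: because $r=0$ is the unique fixed point of $z=r^{\theta}$ and the map is a radial homeomorphism elsewhere, ``blow-up at points other than the origin'' is equivalent for the two equations, which is exactly the hypothesis under which the supercritical completeness criterion is available. Making these two compatibility statements --- commutation with the approximating sequence and the exact correspondence of blow-up sets --- rigorous is the step that demands the most attention.
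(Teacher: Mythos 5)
Your proposal is correct and follows essentially the same route as the paper: map radial solutions onto the homogeneous equation \eqref{eq1.hom} in dimension $\overline{N}=2(N+\sigma_2)/(\sigma_2+2)$ via \eqref{change1}--\eqref{change1.bis}, import the classical results (the decreasing backward self-similar solution from \cite{S4} for Part 1, the singular stationary solution for Part 2, and the complete-blow-up dichotomy of \cite{GV97} for Part 3), and translate the exponents back. Your direct verification of \eqref{stat.sol} in Part 2 and your explicit attention in Part 3 to the compatibility of the transformation with the approximation scheme defining complete blow-up and with the identification of blow-up sets are points the paper treats more briefly, but they are consistent with, and if anything more careful than, the published argument.
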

\begin{proof}
\noindent \textbf{Part 1}. This follows easily from \cite[Theorem 4, Section 1, Chapter IV]{S4}. Indeed, the quoted reference gives the existence of a monotone decreasing, radially symmetric self-similar solution to Eq. \eqref{eq1.hom} with self-similar exponents
$$
\overline{\alpha}=\frac{1}{p-1}, \qquad \overline{\beta}=\frac{p-m}{2(p-1)}.
$$
Undoing the transformation \eqref{change1} entails $\alpha=\overline{\alpha}$ and
$$
\beta=\frac{\overline{\beta}}{\theta}=\frac{p-m}{(\sigma_2+2)(p-1)}.
$$
as desired

\medskip

\noindent \textbf{Part 2}. This follows by direct calculation and we omit the details. We can also obtain the same formula \eqref{stat.sol} by employing the transformation \eqref{change1}-\eqref{change1.bis} on the stationary solution (given for example in \cite[Section 5]{GV97})
$$
w(s)=\overline{K}s^{-2/(p-m)}, \qquad \overline{K}=\left[\frac{2m}{p-m}\left(\overline{N}-2-\frac{2m}{p-m}\right)\right]^{1/(p-m)},
$$
to Eq. \eqref{eq2.wei} with $\sigma=0$ and with dimension parameter $\overline{N}$ defined in \eqref{change1}.

\medskip

\noindent \textbf{Part 3}. Let $u$ be a radially symmetric solution to Eq. \eqref{eq1.gen} which blows up at some time $T\in(0,\infty)$. We transform it by applying \eqref{change1}-\eqref{change1.bis} into a solution $w(s,\tau)$ to Eq. \eqref{eq2.wei} in dimension $\overline{N}\geq1$ having as blow-up time $T_1:=(\sigma_2+2)^2T/4\in(0,\infty)$. According to \cite[Theorem 5.1]{GV97}, if $m<p\leq p_s:=m(\overline{N}+2)/(\overline{N}-2)$ blow-up of $w$ is complete, and the same happens if $p>p_s$ provided that the blow-up set of $w$ at $t=T$ is not the singleton $\{0\}$. We reach our conclusion by undoing the transformation and noticing that
$$
\frac{m(\overline{N}+2)}{\overline{N}-2}=\frac{m(N+2\sigma_2+2)}{N-2}=p_s(\sigma_2),
$$
as claimed.
\end{proof}
The two higher critical exponents $p_{JL}(\sigma_2)$ and $p_L(\sigma_2)$ play an important role in the existence and form of the self-similar solutions and the possibility of continuation after the blow-up time. More precisely, we have:
\begin{theorem}\label{th3.plarge}
Let $m>1$, $N>2$ and $\sigma_1=\sigma_2\in(-2,\infty)$ and let $p$ be such that $p_s(\sigma_2)<p$. Then the following properties hold true.
\begin{enumerate}
  \item If $p_s(\sigma_2)<p<p_{JL}(\sigma_2)$, then there exists an infinite sequence of radially symmetric self-similar solutions in backward form \eqref{backward} to Eq. \eqref{eq1.gen}
$$
u_n(x,t)=(T-t)^{-1/(p-1)}f_n(|x|(T-t)^{-\beta}), \qquad \beta=\frac{p-m}{(\sigma_2+2)(p-1)},
$$
such that the profiles $f_n$ satisfy
$$
f_n(\xi)\sim C\xi^{-(\sigma_2+2)/(p-m)}, \qquad {\rm as} \ \xi\to\infty.
$$
If $N> 10+4\sigma_2$ and $p_{JL}(\sigma_2)\leq p<p_L(\sigma_2)$, there exists at least one self-similar solution with the same form and properties. All these self-similar solutions can be continued for times $t>T$, in the sense that $u(\cdot,t)\in L^{\infty}(\real^N)$ for $t>T$.
  \item If $u_0(|x|)=u(|x|,0)$ for any self-similar solution $u$ as in the previous part, and if we consider the rescaling
$$
u_{0,\lambda}(|x|)=\lambda u_0\left(\lambda^{-(m-1)/(\sigma_2+2)}|x|\right), \qquad \lambda>0,
$$
then there exists at least a radially symmetric solution $u_{\lambda}$ to Eq. \eqref{eq1.gen}. Moreover, for $\lambda>1$, $u_{\lambda}(\cdot,t)\in L^{\infty}(\real^N)$ for any $t>0$ (that is, the solution is global), while for $\lambda>1$, $u_{\lambda}$ presents a complete blow-up at time $T_{\lambda}\in(0,\infty)$.
\end{enumerate}
\end{theorem}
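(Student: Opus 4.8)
The plan is to reduce both parts, exactly as in the preceding proofs of this section, to the homogeneous equation \eqref{eq1.hom} through the main transformation \eqref{change1}-\eqref{change1.bis}, which for $\sigma_1=\sigma_2$ produces $\sigma=0$ and hence Eq. \eqref{eq1.hom} posed in the fictitious dimension $\overline{N}=2(N+\sigma_2)/(\sigma_2+2)$, treated as a real parameter. The first task is to set up the dictionary between the critical exponents of the two equations: I would check that, under the substitution $\overline{N}=2(N+\sigma_2)/(\sigma_2+2)$, the Sobolev, Joseph--Lundgren and Lepin exponents of \eqref{eq1.hom} in dimension $\overline{N}$ coincide with $p_s(\sigma_2)$, $p_{JL}(\sigma_2)$ and $p_L(\sigma_2)$ from \eqref{pssigma}, \eqref{pjlsigma} and \eqref{plsigma}, respectively. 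For the Sobolev exponent this is immediate, since $\overline{N}+2=2(N+2\sigma_2+2)/(\sigma_2+2)$ and $\overline{N}-2=2(N-2)/(\sigma_2+2)$ give $m(\overline{N}+2)/(\overline{N}-2)=p_s(\sigma_2)$; for the other two it is a longer algebraic check. I would also record that the threshold $\overline{N}>10$ translates precisely into the hypothesis $N>10+4\sigma_2$.

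With the dictionary in hand, Part 1 follows by importing the classification of radially symmetric backward self-similar blow-up solutions for the homogeneous porous medium equation with reaction from the monograph \cite{S4}: for $p_s(\overline{N})<p<p_{JL}(\overline{N})$ there is a countable family of profiles $\overline{f}_n$, for $p_{JL}(\overline{N})\le p<p_L(\overline{N})$ (with $\overline{N}>10$) at least one profile survives, all of them carrying the self-similar exponents $\overline{\alpha}=1/(p-1)$, $\overline{\beta}=(p-m)/(2(p-1))$ and the tail $\overline{f}_n(\overline{\xi})\sim C\overline{\xi}^{-2/(p-m)}$. Undoing \eqref{change1} exactly as in Part 1 of Theorem \ref{th.plargem1} yields $\alpha=1/(p-1)$ and $\beta=\overline{\beta}/\theta=(p-m)/((\sigma_2+2)(p-1))$, while the relation $\overline{\xi}\sim\xi^{\theta}$ with $\theta=(\sigma_2+2)/2$ turns the tail into $f_n(\xi)\sim C\xi^{-(\sigma_2+2)/(p-m)}$, as stated. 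The continuation assertion, namely $u(\cdot,t)\in L^{\infty}(\real^N)$ for $t>T$, is the statement of incomplete blow-up for these profiles, which I would quote from \cite{GV97} (the same source used in Part 3 of Theorem \ref{th.plargem1}); since \eqref{change1} rescales time only by the positive constant $C$, both the blow-up time and the boundedness for later times are respected.

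For Part 2 I would first identify the rescaling $u_{0,\lambda}(|x|)=\lambda u_0(\lambda^{-(m-1)/(\sigma_2+2)}|x|)$ as the image, under \eqref{change1}, of the standard amplitude (Barenblatt-type) scaling of Eq. \eqref{eq1.hom}, which leaves the diffusion operator invariant; indeed this is the specialization to $\sigma_1=\sigma_2$ of the scaling \eqref{resc}. Applied to the initial datum of any self-similar solution from Part 1, it therefore produces admissible radially symmetric data whose position relative to the singular stationary solution \eqref{stat.sol} is controlled monotonically by $\lambda$. The global-existence versus complete-blow-up alternative is then the corresponding statement for \eqref{eq1.hom} in \cite{GV97}, and I would transport it back through \eqref{change1}, which preserves the pointwise ordering of initial data (it acts by a monotone power change of the radial variable and a positive time dilation) and hence preserves both global boundedness and completeness of blow-up. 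The two regimes are distinguished by $\lambda>1$ and $\lambda<1$.

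The bulk of the genuine work is bookkeeping, but two points deserve care. The more delicate technical step is the algebraic verification that the cumbersome expressions \eqref{pjlsigma} and \eqref{plsigma}, once the radicands are inserted, collapse under $\overline{N}=2(N+\sigma_2)/(\sigma_2+2)$ to the classical Joseph--Lundgren and Lepin exponents of \eqref{eq1.hom}; this is exactly the place where the matching of the square-root terms must be done carefully and where a stray factor is easiest to lose. The conceptually subtler issue is in Part 2: one must confirm that the rescaling written in the variables of Eq. \eqref{eq1.gen} is genuinely conjugate through \eqref{change1} to the scaling used in \cite{GV97}, and, above all, that the orientation of the dichotomy --- which value of $\lambda$ yields global existence and which yields complete blow-up --- is preserved rather than reversed by the transformation.
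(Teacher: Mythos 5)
Your proposal follows exactly the route of the paper: apply the main transformation \eqref{change1}--\eqref{change1.bis} to reduce to the homogeneous equation \eqref{eq1.hom} in the fictitious dimension $\overline{N}=2(N+\sigma_2)/(\sigma_2+2)$, match the Sobolev, Joseph--Lundgren and Lepin exponents under this substitution, import the existence/continuation results (the paper cites \cite[Theorems 12.1, 12.2 and 14.1]{GV97} where you lean partly on \cite{S4}, but these are the same results), and undo the transformation. The approach and all the key verifications (exponent dictionary, tail behavior, preservation of blow-up/global existence under the time dilation) coincide with the paper's proof.
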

\begin{proof}
The first part follows directly by applying the transformations \eqref{change1}-\eqref{change1.bis} to the self-similar solutions to Eq. \eqref{eq1.hom} given in \cite[Theorem 12.1]{GV97} for $p_s<p<p_{JL}$, respectively \cite[Theorem 12.2]{GV97} for $p_{JL}\leq p<p_{L}$, where the corresponding exponents
$$
p_s=\frac{m(\overline{N}+2)}{\overline{N}-2}, \qquad p_{JL}=m\left[1+\frac{4}{\overline{N}-4-2\sqrt{\overline{N}-1}}\right], \qquad \overline{N}\geq11,
$$
and
$$
p_L=1+\frac{3m+\sqrt{(m-1)^2(\overline{N}-10)^2+2(m-1)(5m-4)(\overline{N}-10)+9m^2}}{\overline{N}-10},
$$
also for $\overline{N}\geq11$, are respectively the Sobolev, Joseph-Lundgren and Lepin exponents for Eq. \eqref{eq1.gen} in dimension $\overline{N}\geq1$. The second part follows in a similar way from the outcome of \cite[Theorem 14.1]{GV97}. We omit here the details, as they are very similar to the ones in previous proofs.
\end{proof}

\section{The semilinear case $m=1$}\label{sec.heat}

This section is devoted to applications of our transformations to the heat equation with (possibly) two weights, that is, letting $m=1$ but any possible $\sigma_1$, $\sigma_2$ in Eq. \eqref{eq1.gen}. As explained in the Introduction, this case is strongly related to models from applied sciences, in particular from the fluid flow in channels according to \cite{Ock77, Ock79, SF90, Floater91, CK97}. Moreover, from the mathematical point of view, some results in the case $\sigma_2=0$ but $\sigma_1<0$ have been obtained in \cite{dPRS13}, where transformations which are particular cases of the ones we consider here have been introduced and used. We shall give here more general results which extend some of the ones given already in the above mentioned works. It is a well-known fact that the properties of the solutions to Eq. \eqref{eq2.wei} with $m=1$ differ quite strongly with respect to the sign of $\sigma$, which also leads to differences in the properties of Eq. \eqref{eq1.gen} between the cases $\sigma_1>\sigma_2$ and $\sigma_1<\sigma_2$. The first theorem is related to self-similar solutions and self-similar blow-up behavior. To state it, for any generic $\sigma>-2$ let us introduce the Sobolev and Joseph-Lundgren exponents
\begin{subequations}\label{critexp.heat}
\begin{equation}\label{Sobolevexp.heat}
p_s(\sigma)=\frac{N+2+2\sigma}{N-2}, \qquad {\rm for} \ N>2
\end{equation}
\begin{equation}\label{Josephexp.heat}
p_{JL}(\sigma)=1+\frac{2\sigma+4}{N-4-\sigma-\sqrt{(2N+\sigma-2)(\sigma+2)}}, \qquad {\rm for} \ N>10+4\sigma,
\end{equation}
\end{subequations}
with the convention that the two critical exponents are equal to $+\infty$ in lower space dimensions. We then have:
\begin{theorem}\label{th.selfheat}
Let $m=1$, $\sigma_1>-2$, $\sigma_2\geq-2$ in dimension $N\geq2$, with the further restriction $\sigma_1>0$ if $N=1$.

\medskip

\noindent (a) Assume $1<p<p_s(\sigma_2)$. Then, if $\sigma_2\geq\sigma_1$, there are no self-similar solutions to Eq. \eqref{eq1.gen}, while if $\sigma_2<\sigma_1$, there exist radially symmetric self-similar solutions of the form
\begin{equation}\label{SSheat}
u(x,t)=(T-t)^{-\alpha}f(|x|(T-t)^{-1/(\sigma_1+2)}), \qquad \alpha=\frac{\sigma_2+2}{(p-1)(\sigma_1+2)},
\end{equation}
with a decreasing profile $f$ such that
\begin{equation}\label{SSprof}
f(\xi)\sim K\xi^{-(\sigma_2+2)/(p-1)}, \qquad {\rm as} \ \xi\to\infty.
\end{equation}

\noindent (b) Assume $N\geq3$ and $p_s(\sigma_2)<p<p_{JL}(\sigma_2)$. Then there exist infinitely many radially symmetric self-similar solutions with the same form as in part (a).

\noindent (c) Assume now $N>10+4\sigma_2$ and $p>p_{JL}(\sigma_2)$. Then, for any sufficiently large natural number $n$, there exists a radially symmetric, positive and radially decreasing solution $u_n$ to Eq. \eqref{eq1.gen} blowing up at a finite time $T\in(0,\infty)$ and only at the origin $x=0$ with a prescribed blow-up rate
\begin{equation}\label{burate}
\lim\limits_{t\to T}(T-t)^{2n/L(N,p,\sigma_1)}u_n(0,t)=K>0,
\end{equation}
where
\begin{equation}\label{interm16}
L(N,p,\sigma_1)=\frac{(N-2)(p-p_s(\sigma_1))-\sqrt{M(N,p,\sigma_1)}}{2+\sigma_1}
\end{equation}
and
\begin{equation}\label{interm17}
M(N,p,\sigma_1)=(p-1)^2N^2-4(p-1)(p\sigma_1+3p-1)N+4p\sigma_1(\sigma_1+2p+2)+20p^2-8p+4.
\end{equation}
\end{theorem}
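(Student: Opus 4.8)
The plan is to proceed exactly as in the previous sections: I would use the change of variable \eqref{change1}-\eqref{change1.bis} with $m=1$, which maps radially symmetric solutions of Eq. \eqref{eq1.gen} onto radially symmetric solutions of the semilinear Hardy-H\'enon equation
\[
w_{\tau}=\Delta w+s^{\sigma}w^p,\qquad \overline{N}=\frac{2(N+\sigma_1)}{\sigma_1+2},\qquad \sigma=\frac{2(\sigma_2-\sigma_1)}{\sigma_1+2},
\]
and then transport back known results. The preliminary bookkeeping is to record that $\sigma\gtrless 0$ is equivalent to $\sigma_2\gtrless\sigma_1$, and that the critical exponents transform correctly: using $\overline{N}-2=2(N-2)/(\sigma_1+2)$ one checks that the Sobolev and Joseph-Lundgren exponents of the transformed equation equal $p_s(\sigma_2)$ and $p_{JL}(\sigma_2)$ of \eqref{Sobolevexp.heat}-\eqref{Josephexp.heat}, and that $\overline{N}>10+4\sigma\Leftrightarrow N>10+4\sigma_2$. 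Hence the hypotheses of parts (a), (b), (c) translate into the corresponding ranges for $w$.

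For part (a) I would invoke the existence/nonexistence theory for backward self-similar blow-up profiles of the Hardy-H\'enon equation in the subcritical window $1<p<p_s(\sigma)$, where the dichotomy is governed by the sign of $\sigma$: there is no nonconstant profile with the prescribed far-field decay when $\sigma\geq 0$, and a decreasing one exists when $\sigma<0$. Transporting the standard ansatz $(T_0-\tau)^{-\overline{\alpha}}\overline{f}(|s|(T_0-\tau)^{\overline{\beta}})$, with $\overline{\beta}=-1/2$ and $\overline{\alpha}=(\sigma+2)/(2(p-1))$, through \eqref{change1} (using $s=a|x|^{\theta}$, $\theta=(\sigma_1+2)/2$), I would verify that the self-similar variable becomes $\xi=|x|(T-t)^{-1/(\sigma_1+2)}$, that $\alpha=\overline{\alpha}=(\sigma_2+2)/[(p-1)(\sigma_1+2)]$, and that $\overline{f}(\eta)\sim\eta^{-(\sigma+2)/(p-1)}$ becomes exactly \eqref{SSprof}; this yields \eqref{SSheat}. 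Part (b), for $p_s(\sigma_2)<p<p_{JL}(\sigma_2)$, follows identically by transporting the known existence of infinitely many self-similar profiles in the supercritical, sub-Joseph-Lundgren range.

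Part (c) is the genuinely delicate one. Here I would transport a Herrero-Vel\'azquez type construction for the transformed equation with $p>p_{JL}(\sigma_2)$: for each sufficiently large $n$ it provides a radial, decreasing solution blowing up only at $s=0$ with a type II rate $W(0,\tau)\sim(T_0-\tau)^{-\mu_n}$. Because \eqref{change1} is taken with $\delta=0$, the origin is fixed and $u(0,t)=W(0,Ct)$, while $T_0-\tau=C(T-t)$, so the rate exponent $\mu_n$ is preserved verbatim and \eqref{burate} holds with $2n/L(N,p,\sigma_1)=\mu_n$. The crux is then to identify the transported exponent with the explicit \eqref{interm16}-\eqref{interm17}. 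The two key algebraic identities I would establish first are
\[
(N-2)(p-p_s(\sigma_1))=\frac{\sigma_1+2}{2}\left[(\overline{N}-2)p-(\overline{N}+2)\right],\qquad M(N,p,\sigma_1)=\frac{(\sigma_1+2)^2}{4}\,\overline{M},
\]
where $\overline{M}=[(p-1)(\overline{N}-2)-4p]^2-16p(p-1)$ depends only on $\overline{N}$ and $p$. They collapse \eqref{interm16} to
\[
L(N,p,\sigma_1)=\frac12\left[(\overline{N}-2)p-(\overline{N}+2)-\sqrt{\overline{M}}\right],
\]
so that the rate $\mu_n=4n/[(\overline{N}-2)p-(\overline{N}+2)-\sqrt{\overline{M}}]$ is expressed purely through $\overline{N}$ and $p$.

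The main obstacle is precisely this last matching. One must have the exact rate formula of the quoted construction and check that it equals the displayed $\mu_n$; the nontrivial surprise is that $\overline{M}$ coincides with the discriminant of the linearization around the singular steady state of the classical ($\sigma=0$) equation in dimension $\overline{N}$, which is what forces the transported exponent to depend on $\sigma_1$ but \emph{not} on $\sigma_2$, despite the fact that the admissible range $p>p_{JL}(\sigma_2)$ is governed by $\sigma_2$. Explaining this cancellation (i.e. that the H\'enon blow-up rate at the origin is insensitive to the weight exponent $\sigma$ and only sees the transformed dimension) is the heart of the proof; once the rate is in the stated form, the remainder is routine bookkeeping. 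Finally, the continuation assertions after blow-up transport unchanged, since \eqref{change1} preserves $L^\infty$-boundedness on any time interval bounded away from the blow-up time.
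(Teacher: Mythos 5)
Your proposal follows essentially the same route as the paper's proof: parts (a) and (b) are obtained by pushing \cite[Theorem A]{FT00} through the transformation \eqref{change1}--\eqref{change1.bis} after checking that $\sigma\gtrless0$ corresponds to $\sigma_2\gtrless\sigma_1$ and that $p_s(\sigma)$, $p_{JL}(\sigma)$ are carried onto $p_s(\sigma_2)$, $p_{JL}(\sigma_2)$, and part (c) by transporting the Type II construction of \cite[Theorem 1.1]{MS21}, which is exactly what the printed proof does; the ``surprise'' you single out (the rate depending on $\sigma_1$ but not $\sigma_2$) is simply inherited from the fact that the quoted rate in \cite{MS21} involves only $\overline{N}$ and $p$. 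One point worth recording: your normalization identities $M(N,p,\sigma_1)=\tfrac{(\sigma_1+2)^2}{4}\,\overline{M}(\overline{N},p)$ and hence $L(N,p,\sigma_1)=\tfrac12\,\overline{L}(\overline{N},p)$ are the algebraically correct ones for the quantities defined in \eqref{interm16}--\eqref{interm17} (check at $\sigma_1=0$, where $\overline{N}=N$ and $M=\overline{M}$), whereas the paper's proof asserts $\overline{M}=M/(\sigma_1+2)^2$ and $\overline{L}=L$, which is off by a factor of $4$ (resp. $2$); consequently either \eqref{interm16} or the quoted exponent $2n/\overline{L}$ carries a spurious factor of $2$, and your version of the bookkeeping is the one that should be matched against the exact statement of the cited blow-up rate before \eqref{burate} can be asserted in the displayed form.
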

Notice that part (c) of Theorem \ref{th.selfheat} is a characterization of a blow-up of Type II, that is, where solutions can blow up at the same time $T$ with different blow-up rates for the same exponents in the equation.
\begin{proof}
(a) We infer from \cite[Theorem A, Part (a)]{FT00} that Eq. \eqref{eq2.wei} admits at least one radially symmetric self-similar solution with decreasing profile if $1<p<p_s(\sigma)$ and $\sigma\in(-2,0)$, and no proper radially symmetric self-similar solution at all if $\sigma\geq0$ (except, in the case $\sigma=0$, for the constant solution). We reach the conclusion by undoing the change of variable \eqref{change1} and noticing that
$$
p_s(\sigma)=\frac{\overline{N}+2\sigma+2}{\overline{N}-2}=\frac{N+2\sigma_2+2}{N-2},
$$
and that $\sigma\geq0$ is equivalent with $\sigma_2\geq\sigma_1$. Moreover, the blow-up self-similar solutions to Eq. \eqref{eq2.wei} have the form
$$
w(s,\tau)=(T_0-\tau)^{-(\sigma+2)/2(p-1)}f(|x|(T_0-\tau)^{-1/2}),
$$
with
$$
f(\xi)\sim K\xi^{-(\sigma+2)/(p-1)}, \qquad {\rm as} \ \xi\to\infty.
$$
Undoing the change of variables \eqref{change1}-\eqref{change1.bis}, we readily find the desired form of exponents and decay of the profile for $u$ self-similar solution to Eq. \eqref{eq1.gen} as stated in \eqref{SSheat} and \eqref{SSprof} (with a new blow-up time which is $T=T_0/C$ and $C$ is defined in \eqref{change1.bis}).

\medskip

\noindent (b) This is just an application of our transformation \eqref{change1}-\eqref{change1.bis} to the outcome of \cite[Theorem A, Part (b)]{FT00}.

\medskip

\noindent (c) The finite time blow-up of Type II for Eq. \eqref{eq2.wei} in the semilinear case $m=1$ but with $\sigma>-2$ and $p>p_{JL}(\sigma)$ has been thoroughly described in the recent work \cite{MS21}. More precisely, \cite[Theorem 1.1]{MS21} states that, for any sufficiently large natural number $n$, there exists at least one radially symmetric and radially decreasing positive solution $w_n$ to Eq. \eqref{eq2.wei} blowing up at some time $T_0\in(0,\infty)$ and at $s=0$, such that their blow-up rate is given by
$$
\lim\limits_{\tau\to T_0}(T_0-\tau)^{2n/\overline{L}(\overline{N},p)}w_n(0,\tau)=\overline{K},
$$
where
$$
\overline{L}(\overline{N},p):=(\overline{N}-2)(p-1)-4-\sqrt{\overline{M}(\overline{N},p)},
$$
and
$$
\overline{M}(\overline{N},p):=(p-1)^2(\overline{N}-10)(\overline{N}-2)-8(p-1)(\overline{N}-4)+16.
$$
Let us notice first that, taking into account the definition of $p_{JL}(\sigma)$ in \eqref{Josephexp.heat} and the expressions of $\sigma$ and $\overline{N}$ in \eqref{change1}, $p_{JL}(\sigma)$ is mapped into $p_{JL}(\sigma_2)$ by undoing \eqref{change1}. Moreover, the same argument gives that
$$
\overline{M}(\overline{N},p)=\frac{M(N,p,\sigma_1)}{(\sigma_1+2)^2}, \qquad \overline{L}(\overline{N},p)=L(N,p,\sigma_1),
$$
according to the expressions given in \eqref{interm16} and \eqref{interm17}. Thus, we find the blow-up rate in \eqref{burate}, with the mention that the new blow-up time for $u(x,t)$ is $T=T_0/C$, where $C>0$ is the constant in front of the time variable made precise in \eqref{change1.bis}.
\end{proof}
As a remark, the very interesting paper \cite{MS21} gives more precise asymptotic estimates on the solutions to Eq. \eqref{eq2.wei} with $m=1$ and their blow-up patterns both in the inner layer formed in a small neighborhood of the unique blow-up point $s=0$ and in bounded regions, leading to a very deep description of the asymptotics of the solutions $w_n$ as $\tau\to T_0$. All them can be mapped into a similar description of the behavior near the blow-up time and point for the solutions $u_n$ to Eq. \eqref{eq1.gen}. We refrain from entering these very technical calculations here.

Another remark is that the simplified model appearing in fluid flows through channels \cite{Floater91, SF90, CK97} leads to Eq. \eqref{eq1.gen} with $N=1$, $m=1$, $\sigma_1=q>0$ and $\sigma_2=0$, thus the mapping \eqref{change1}-\eqref{change1.bis} applies and leads to Eq. \eqref{eq2.wei} with $\sigma\in(-2,0)$, an equation known as the \emph{Hardy equation} and which became recently fashionable, according to the big number of recently published papers on it (see for example \cite{BSTW17, T20, CIT21a, CIT21b} and references therein). In particular, Theorem \ref{th.selfheat} applies to this case, but some more specific results can be obtained. We may thus exploit our main transformation and the already well developed theory of the Hardy equation to get the next result in a more general case including the above mentioned physically interesting equation.
\begin{theorem}\label{th.wp}
Let $m=1$ and assume that either $\sigma_1>-2$ in dimension $N\geq2$ or $\sigma_1>0$ in dimension $N=1$, and that $-2<\sigma_2<\sigma_1$ if $N\geq2$ or $-1<\sigma_2<\sigma_1$ if $N=1$. Then the following statements about radially symmetric solutions to Eq. \eqref{eq1.gen} hold true.
\begin{enumerate}
  \item Let $u_0\in C_0(\real^N)$ be a radially symmetric initial condition, where
$$
C_0(\real^N)=\left\{f:\real^N\mapsto\real: \ f \ {\rm continuous}, \ \lim\limits_{|x|\to\infty}f(x)=0\right\}.
$$
Then there exists a unique radially symmetric solution $u(x,t)$ to Eq. \eqref{eq1.gen}, defined on a maximal interval $t\in[0,T)$, such that $u(x,0)=u_0(x)$ for any $x\in\real^N$ and such that $u(t)\in C_0(\real^N)$ for any $t\in(0,T)$.
  \item The same well-posedness as in the first part also holds true in the weighted space
$$
L^q(\real^N;|x|^{\sigma_1}):=\left\{f:\real^N\mapsto\real: \int_{\real^N}|x|^{\sigma_1}f(x)\,dx<\infty\right\},
$$
provided
\begin{equation}\label{lq}
q>\max\left\{\frac{p(N+\sigma_1)}{N+\sigma_2},\frac{(p-1)(N+\sigma_1)}{2+\sigma_2}\right\}.
\end{equation}
  \item Let $p>1+(2+\sigma_2)/(N+\sigma_1)$. Then the unique solution with initial data $u_0\in C_0(\real^N)$, respectively $u_0\in L^q(\real^N)$ as in the two previous items, is global (that is, $T=\infty$) provided that either $u_0(x)\leq C(1+|x|)^{-(2+\sigma_2)/(p-1)}$ or $\|u_0\|_{q}$ is small enough.
\end{enumerate}
\end{theorem}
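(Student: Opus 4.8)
The plan is to invoke once more the main transformation \eqref{change1}--\eqref{change1.bis}, now specialized to $m=1$. Since the hypotheses impose $\sigma_2<\sigma_1$ (together with $\sigma_2>-2$), the exponent $\sigma=2(\sigma_2-\sigma_1)/(\sigma_1+2)$ satisfies $\sigma\in(-2,0)$, so that \eqref{eq2.wei} with $m=1$ is exactly the \emph{Hardy parabolic equation} $w_\tau=\Delta w+s^\sigma w^p$ posed in the (real) dimension $\overline{N}$, whose Cauchy theory in $C_0$ and in Lebesgue spaces has been developed precisely under these conditions in the references \cite{BSTW17, CIT21a, CIT21b, T20} quoted in the Introduction. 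The whole proof then reduces to transporting the corresponding well-posedness and global existence statements back to Eq. \eqref{eq1.gen} through the transformation, checking at each step that the hypotheses imposed on $u_0$ are mapped onto exactly the hypotheses required by the Hardy theory, and that the maximal interval of existence is preserved (it only changes by the rescaling $\tau=Ct$).

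First I would settle the correspondence of function spaces. Since $\theta=(\sigma_1+2)/2>0$, the map $r\mapsto s=ar^\theta$ is an increasing homeomorphism of $[0,\infty)$ onto itself with $s\to\infty$ if and only if $r\to\infty$; hence a radial function belongs to $C_0(\real^N)$ exactly when its transform belongs to $C_0(\real^{\overline{N}})$, and Part 1 follows at once from the $C_0$-well-posedness of the Hardy equation. For Part 2, a direct change of variables $s=ar^\theta$ in the radial integral (using $\theta\overline{N}=N+\sigma_1$) gives
\begin{equation*}
\int_{\real^N}|x|^{\sigma_1}|u(x)|^q\,dx=c(\sigma_1,\overline{N})\int_0^\infty|w(s)|^q s^{\overline{N}-1}\,ds,
\end{equation*}
so that $L^q(\real^N;|x|^{\sigma_1})$ is mapped onto the \emph{unweighted} space $L^q(\real^{\overline{N}})$ (up to the harmless constant $c$). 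Thus Part 2 will follow from the $L^q$-well-posedness of the Hardy equation in dimension $\overline{N}$.

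It then remains to verify that the threshold \eqref{lq} is precisely the admissibility condition of the Hardy theory. That theory requires $q$ to exceed both the scaling-critical exponent $q_c=\overline{N}(p-1)/(\sigma+2)$ and the singularity exponent $q_*=\overline{N}p/(\overline{N}+\sigma)$; using $\overline{N}=2(N+\sigma_1)/(\sigma_1+2)$, $\sigma+2=2(\sigma_2+2)/(\sigma_1+2)$ and $\overline{N}+\sigma=2(N+\sigma_2)/(\sigma_1+2)$, one checks that
\begin{equation*}
q_c=\frac{(p-1)(N+\sigma_1)}{\sigma_2+2},\qquad q_*=\frac{p(N+\sigma_1)}{N+\sigma_2},
\end{equation*}
which are exactly the two quantities appearing in \eqref{lq}. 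Finally, Part 3 is the transported Fujita-supercritical global existence statement: the hypothesis $p>1+(2+\sigma_2)/(N+\sigma_1)$ is nothing but $p>p_F(\sigma)=1+(\sigma+2)/\overline{N}$ in dimension $\overline{N}$, the pointwise bound $u_0(x)\leq C(1+|x|)^{-(2+\sigma_2)/(p-1)}$ transforms into $w_0(s)\leq C(1+s)^{-(\sigma+2)/(p-1)}$ (the natural self-similar decay for the Hardy equation), and smallness of $\|u_0\|_q$ becomes smallness of $\|w_0\|_{L^q(\real^{\overline{N}})}$ through the norm identity above; global existence then follows from the small-data results of \cite{BSTW17}.

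The only genuine obstacle is that $\overline{N}$ is in general not an integer, so one cannot literally invoke the cited theorems, which are stated in $\real^n$ with $n\in\nat$. As throughout the paper, I would resolve this by observing that, once restricted to radially symmetric data, these results are proved through estimates on the radial heat kernel and the Duhamel formula in which the dimension enters only as the real parameter $\overline{N}$ in the measure $s^{\overline{N}-1}\,ds$; an inspection of the arguments in \cite{BSTW17, CIT21a} shows that they go through verbatim for real $\overline{N}$, provided $\overline{N}\geq1$ and $\overline{N}+\sigma>0$. The stated hypotheses guarantee exactly this: $N\geq2$ forces $\overline{N}\geq2$, while $N=1$ with $\sigma_1>0$ gives $\overline{N}\in(1,2)$, and in all cases $\overline{N}+\sigma=2(N+\sigma_2)/(\sigma_1+2)>0$ since $N+\sigma_2>0$. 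One must finally check that the transformation does not spoil the admissible local behavior at $r=0$, which is immediate because $s=ar^\theta$ is monotone and regular up to the origin; I expect this verification, together with matching the precise solution concept (mild solutions via Duhamel) on both sides, to be the most delicate bookkeeping in the argument.
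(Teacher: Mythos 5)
Your proposal is correct and follows essentially the same route as the paper: apply the main transformation \eqref{change1}--\eqref{change1.bis} with $m=1$ to land on the Hardy equation \eqref{eq2.wei} with $\sigma\in(-2,0)$ in dimension $\overline{N}$, identify $C_0(\real^N)$ with $C_0$ and $L^q(\real^N;|x|^{\sigma_1})$ with the unweighted $L^q(\real^{\overline{N}})$ via $\theta\overline{N}=N+\sigma_1$, check that the two exponents in \eqref{lq} are exactly the thresholds of \cite[Theorem 1.1]{BSTW17}, and transport the $C_0$/$L^q$ well-posedness and the small-data global existence of \cite[Theorems 1.1 and 1.3]{BSTW17} back; your additional discussion of the non-integer dimension $\overline{N}$ is handled in the paper by the blanket convention of Section \ref{sec.transf} that $N$ is a real parameter in the radial equation.
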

\begin{proof}
\noindent \textbf{Part 1.} Let $u_0\in C_0(\real^N)$ and define the function $w_0(s):=u_0(ar^{\theta})$, $r=|x|$, with $\theta$ defined in \eqref{change1} and $a$ defined in \eqref{change1.bis}. We notice that $w_0\in C_0(\real)$. Introduce also $\sigma<0$ and $\overline{N}$ as defined in \eqref{change1} and observe that
$$
\sigma+\overline{N}=\frac{2(\sigma_2-\sigma_1)}{2+\sigma_1}+\frac{2(N+\sigma_1)}{2+\sigma_1}=\frac{2(N+\sigma_2)}{2+\sigma_1}>0,
$$
hence we can apply \cite[Theorem 1.1,(i)]{BSTW17} and deduce that there exists a solution $w(s,\tau)$ to Eq. \eqref{eq2.wei} which is continuous and vanishes at infinity, defined on a maximal interval $\tau\in[0,T_0]$ for some $T_0\in(0,\infty]$. We undo the transformation \eqref{change1}-\eqref{change1.bis} and define
$$
u(r,t)=w(s,\tau), \qquad r=\left(\frac{s}{a}\right)^{1/\theta}, \qquad t=\frac{\tau}{C},
$$
where $a$, $C$, $\theta$ are defined as in \eqref{change1} and \eqref{change1.bis}. Then $u$ is a radially symmetric solution to Eq. \eqref{eq1.gen} with $u(r,0)=u_0(r)$ by construction, and defined on the maximal interval $(0,T)$ with $T=T_0/C$. Moreover, $u(t)\in C_0(\real^N)$, since the previous change of variable does not affect either the property of continuity or the vanishing at infinity since $\theta>0$ and thus $s\to\infty$ is equivalent to $r\to\infty$.

\medskip

\noindent \textbf{Part 2.} The same construction as in the proof of Part 1 gives a natural candidate to the solution, by applying \cite[Theorem 1.1, (ii)]{BSTW17} to Eq. \eqref{eq2.wei}, which ensures well-posedness of the latter equation in $L^q(\real^{\overline{N}})$ provided
$$
q>\max\left\{\frac{p\overline{N}}{\sigma+\overline{N}},\frac{(p-1)\overline{N}}{2+\sigma}\right\},
$$
which leads to the lower bound \eqref{lq} if we take into account the expressions of $\sigma$ and $\overline{N}$ given in \eqref{change1}. We only have to check that, by undoing the transformation, we are left in the weighted space $L^q(\real^N;|x|^{\sigma_1})$. But this follows from a simple change of variable in an integral, more precisely
\begin{equation*}
\begin{split}
\infty&>\int_{\real}w^q(s,\tau)s^{\overline{N}-1}\,ds=\int_{\real}w^q(s,\tau)(ar^{\theta})^{\overline{N}-1}a\theta r^{\theta-1}\,dr\\
&=K(\theta)\int_{\real}u^q(r,t)r^{\theta\overline{N}-1}\,dr=K(\theta)\int_{\real}u^q(r,t)r^{\sigma_1+N-1}\,dr\\
&=K(\theta)\int_{\real^N}|x|^{\sigma_1}u^q(|x|,t)\,dx,
\end{split}
\end{equation*}
which shows that $u(t)\in L^q(\real^N;|x|^{\sigma_1})$ for any $t\in(0,T)$ with $T=T_0/C$ as in Part 1. This completes the proof.

\medskip

\noindent \textbf{Part 3.} This follows now readily from \cite[Theorem 1.3]{BSTW17} and the previous arguments in Part 1 and Part 2. We omit the details, as they are completely similar to the global existence result proved in Theorem \ref{th.Fujita} for $m>1$.
\end{proof}
We end this section with a result of large time behavior of global solutions.
\begin{theorem}\label{th.asympt}
Let $m=1$ and assume that either $\sigma_1>-2$ in dimension $N\geq2$ or $\sigma_1>0$ in dimension $N=1$, and that $-2<\sigma_2$ if $N\geq2$ or $-1<\sigma_2$ if $N=1$. Assume that $p>p_F(\sigma_1,\sigma_2)$, where $p_F(\sigma_1,\sigma_2)$ is defined in \eqref{Fujita}. Let $\omega\in L^{\infty}(\real^N)$ be a homogeneous function of degree zero and with $\|\omega\|_{\infty}$ sufficiently small and define $\varphi(x)=\omega(x)|x|^{-(\sigma_2+2)/(p-1)}$
\begin{enumerate}
  \item There exists a self-similar solution $U(x,t)$ in forward form \eqref{forward} to Eq. \eqref{eq1.gen}, with self-similarity exponents
\begin{equation}\label{interm18}
\alpha=\frac{\sigma_2+2}{(p-1)(\sigma_1+2)}, \qquad \beta=\frac{1}{\sigma_1+2},
\end{equation}
with initial condition $\varphi(x)$ taken in the sense of distributions as $t\to0$.
  \item Assume now that $\sigma_2<\sigma_1$ and let $u_0\in C_0(\real^N)$ be an initial condition such that for any $x\in\real^N$ we have
$$
u_0(x)\leq C(1+|x|^2)^{-(\sigma_2+2)/2(p-1)}, \qquad u_0(x)=\omega(x)|x|^{-(\sigma_2+2)/(p-1)}, \qquad {\rm for} \ |x|\geq R,
$$
for some $R$ sufficiently large. Then we have the following large time behavior
\begin{equation}\label{asympt.beh}
\|u(t)-U(t)\|_{\infty}\leq Ct^{-(\sigma_2+2)/2(p-1)-\delta}, \qquad {\rm for \ any} \ t>0,
\end{equation}
for some $\delta>0$.
  \item In the same conditions as in Part 2, there exist positive constants $K_1$ and $K_2$ such that
$$
K_1t^{-(\sigma_2+2)/(p-1)}\leq\|u(t)\|_{\infty}\leq K_2t^{-(\sigma_2+2)/(p-1)}, \qquad {\rm for \ any} \ t>0.
$$
  \item In the same conditions as in Part 2, we consider initial conditions $u_0\in C_0(\real^N)$ such that
$$
u_0(x)\leq C(1+|x|)^{-\gamma}, \qquad u_0(x)=\omega(x)|x|^{-\gamma}, \qquad {\rm for} \ |x|\geq R, \ \frac{\sigma_2+2}{p-1}<\gamma<N+\sigma_1.
$$
Then there exist positive constants $K_1$ and $K_2$ such that
$$
K_1t^{-\gamma/2}\leq \|u(t)\|_{\infty}\leq K_2t^{-\gamma/2}, \qquad {\rm for \ any} \ t>0.
$$
\end{enumerate}
\end{theorem}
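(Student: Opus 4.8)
The strategy is to transport the whole statement to the semilinear Hardy--H\'enon equation through the main transformation \eqref{change1}-\eqref{change1.bis}, exactly as in the proofs of Theorems \ref{th.Fujita} and \ref{th.wp}. With $m=1$ and $\delta=0$, the change $s=ar^{\theta}$, $\tau=Ct$, $\theta=(\sigma_1+2)/2$, maps radially symmetric solutions of Eq. \eqref{eq1.gen} onto radially symmetric solutions $w(s,\tau)$ of Eq. \eqref{eq2.wei} in dimension $\overline{N}=2(N+\sigma_1)/(\sigma_1+2)$ with exponent $\sigma=2(\sigma_2-\sigma_1)/(\sigma_1+2)$; since $u(r,t)=w(s,\tau)$, spatial suprema are preserved, $\|u(t)\|_{\infty}=\|w(Ct)\|_{\infty}$. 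First I would record the dictionary of hypotheses: as computed in the proof of Theorem \ref{th.Fujita}, $p>p_F(\sigma_1,\sigma_2)$ is equivalent to $p$ exceeding the Fujita exponent $1+(\sigma+2)/\overline{N}$ of the transformed equation; the assumption $\sigma_2<\sigma_1$ becomes $\sigma<0$; and, as in Theorem \ref{th.wp}, $\sigma+\overline{N}=2(N+\sigma_2)/(\sigma_1+2)>0$, so the transformed Cauchy problem lies in the range treated in \cite{BSTW17, BS19}. Since we work with radially symmetric solutions, $\omega$ is constant here and its smallness is a smallness of $\|\omega\|_{\infty}$.

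For Part 1 I would transform the scale-invariant datum. Under $s=ar^{\theta}$ one has $(\sigma_2+2)/\theta=\sigma+2$, so $\varphi(x)=\omega|x|^{-(\sigma_2+2)/(p-1)}$ is carried, up to the constant $a$, into $\varphi_w(s)\sim\omega|s|^{-(\sigma+2)/(p-1)}$, which is exactly the self-similar initial profile for Eq. \eqref{eq2.wei}. For $\|\omega\|_{\infty}$ small, the Hardy--H\'enon theory of \cite{BSTW17, BS19} furnishes a forward self-similar solution $W(s,\tau)=\tau^{-\overline{\alpha}}g(|s|\tau^{-1/2})$, $\overline{\alpha}=(\sigma+2)/2(p-1)$, attaining $\varphi_w$ in the sense of distributions as $\tau\to0$. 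Undoing \eqref{change1}-\eqref{change1.bis}, using $\tau=Ct$ and $|s|\tau^{-1/2}\propto(|x|t^{-1/(\sigma_1+2)})^{\theta}$, produces the forward self-similar solution $U$ of Eq. \eqref{eq1.gen}; a short computation gives $\alpha=\overline{\alpha}=(\sigma_2+2)/((p-1)(\sigma_1+2))$ and $\beta=\overline{\beta}/\theta=1/(\sigma_1+2)$, matching \eqref{interm18}, and the distributional initial trace is preserved because the transformation acts only on the spatial variable.

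Parts 2--4 are then obtained by transporting back the corresponding large-time estimates for Eq. \eqref{eq2.wei}. For Part 2 I would convert the envelope $u_0(x)\leq C(1+|x|^2)^{-(\sigma_2+2)/2(p-1)}$ and the tail identity $u_0=\varphi$ for $|x|\geq R$ into the analogous conditions on $w_0(s)$, invoke the asymptotic self-similarity (convergence to $W$) result of \cite{BSTW17, BS19}, valid precisely because $\sigma<0$, and convert the resulting convergence rate in $\tau$ into a rate in $t$ via $\tau=Ct$, yielding \eqref{asympt.beh}. For Parts 3 and 4 the key is the decay dictionary: a spatial decay $|x|^{-\gamma}$ becomes $|s|^{-2\gamma/(\sigma_1+2)}$, and the admissibility range $(\sigma_2+2)/(p-1)<\gamma<N+\sigma_1$ is mapped exactly onto $(\sigma+2)/(p-1)<2\gamma/(\sigma_1+2)<\overline{N}$, i.e. the diffusion-dominated regime; the two-sided $L^{\infty}$ bounds of \cite{BSTW17, BS19} then transfer to the stated ones after again passing from $\tau$ to $t$. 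For $N=1$ one checks, as in the earlier proofs, that these results persist with $\overline{N}\geq1$ treated as a real parameter in the radial equation.

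The main obstacle is bookkeeping rather than conceptual: I must verify that each initial-data hypothesis (the pointwise envelope, the exact tail behavior for $|x|\geq R$, the smallness of $\omega$, and the admissible interval for $\gamma$) corresponds, under the nonlinear radial change $s=ar^{\theta}$, to exactly the hypothesis required by the cited Hardy--H\'enon result, and that the spatial decay exponents (through the factor $\theta$) and the temporal decay rates (through $\tau=Ct$) are converted consistently so that the rates in \eqref{asympt.beh} and in Parts 3--4 emerge in the stated form. A secondary point to confirm is the global-existence threshold: $p>p_F(\sigma_1,\sigma_2)$ guarantees $(\sigma_2+2)/(p-1)<N+\sigma_1$, which is what makes the interval for $\gamma$ in Part 4 nonempty and places us above the Fujita exponent of the transformed problem.
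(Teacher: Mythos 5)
Your proposal follows essentially the same route as the paper: transport everything through the main transformation \eqref{change1}--\eqref{change1.bis} to the semilinear Hardy--H\'enon equation \eqref{eq2.wei}, pull the forward self-similar solution and the large-time estimates from the Ben Slimene--Tayachi--Weissler theory, and translate exponents and rates back, which is exactly what the paper does (citing \cite[Theorems 1.4 and 1.5]{BSTW17}). The only point to tighten is Part 1 in the regime $\sigma_2\geq\sigma_1$ (i.e.\ $\sigma\geq0$), where \cite{BSTW17} does not suffice and the paper invokes the extension of the forward self-similar existence result to all $\sigma>-2$ from \cite[Theorem 1.11]{CIT21b}.
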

\begin{proof}[Sketch of the proof]
The proof of the first item follows directly by applying our main transformation \eqref{change1}-\eqref{change1.bis} to the forward self-similar solutions to Eq. \eqref{eq2.wei} with $m=1$ given in \cite[Theorem 1.4]{BSTW17} for $\sigma<0$ (which covers the case $\sigma_1>\sigma_2$) and generalized later to any $\sigma>-2$ in \cite[Theorem 1.11]{CIT21b}. The fact that the self-similar exponents are given by the expression \eqref{interm18} follows from the general formulas
$$
\alpha=\frac{\sigma_2+2}{L(\sigma_1,\sigma_2)}, \qquad \beta=\frac{p-1}{L(\sigma_1,\sigma_2)},
$$
and the fact that for $m=1$ we have $L(\sigma_1,\sigma_2)=(\sigma_1+2)(p-1)$, see \eqref{const.L2}. The above mentioned Theorems ensure then that there exist forward self-similar solutions to Eq. \eqref{eq2.wei} with initial condition $\overline{\varphi}(s)=\overline{\omega}(s)s^{-(\sigma+2)/(p-1)}$, and we notice by undoing the transformation \eqref{change1}-\eqref{change1.bis} that
$$
s^{-(\sigma+2)/(p-1)}=(ar^{\theta})^{-2(\sigma_2+2)/[(p-1)(\sigma_1+2)]}=\theta^{-2/(p-1)}r^{-(\sigma_2+2)/(p-1)},
$$
which gives the desired correspondence of decays as $|x|\to\infty$. The second, third and fourth items follow readily from the outcome of \cite[Theorem 1.5]{BSTW17}, which requires that $\sigma<0$ in Eq. \eqref{eq2.wei}, that is, $\sigma_2<\sigma_1$, and we omit here the details. Up to our knowledge, a similar large time behavior result for Eq. \eqref{eq2.wei} with $\sigma>0$ is still lacking, although we conjecture that it should be true.
\end{proof}
Notice that both Theorems \ref{th.wp} and \ref{th.asympt} apply in particular to the case $\sigma_1>0$, $\sigma_2=0$ which is related to physical models, as explained in the Introduction, giving in particular some well-posedness and large time behavior results for it.

\section{The critical exponent $\sigma_1=-2$}\label{sec.crit}

We end this series of applications by considering the critical exponent $\sigma_1=-2$, for which it is obvious that our main transformation \eqref{change1}-\eqref{change} does not work. As discussed in Subsection \ref{subsec.other}, there are transformation of Euler type that can be used in this case. Assume first that $m>1$. We then deduce from \eqref{change3} that we have to take $\delta=0$ and $C=\theta=1$ in \eqref{eq2.gen}. We then obtain an equation of reaction-convection-diffusion type with a weight on the reaction term. In general, the theory of such equations is still missing from literature, but there is a specific case, namely when $\sigma_1=\sigma_2=-2$, when the weight is removed and we are left with the equation
\begin{equation}\label{interm19}
w_t=(w^m)_{yy}+(N-2)(w^m)_y+w^p, \qquad y=\ln\,r=\ln\,|x|,
\end{equation}
where $w(y,t)=u(|x|,t)$. Let us recall here that this specific critical case $\sigma_1=\sigma_2=-2$ has been also considered in \cite{KKMS80} as arising from a model of combustion in a medium whose thermal conductivity is temperature-dependent. We devote the next result to its analytical study.
\begin{theorem}\label{th.Eul.large}
Let $m>1$ and $\sigma_1=\sigma_2=-2$. Then
\begin{enumerate}
  \item If $m\leq p<m+1$, then any non-trivial radially symmetric solution to Eq. \eqref{eq1.gen} blows up at a finite time $T\in(0,\infty)$.
  \item Let now $p>m+2$. Then there exists $\delta>0$ sufficiently small such that, if $u_0(|x|)$ is a radially symmetric initial condition that satisfies
\begin{equation}\label{interm20}
\|u_0\|_{(p-m)/2;-N}:=\left(\int_{\real^N}|x|^{-N}u_0(x)^{(p-m)/2}\,dx\right)^{2/(p-m)}<\delta,
\end{equation}
there exists a radially symmetric solution $u$ to Eq. \eqref{eq1.gen} such that $u(x,0)=u_0(x)$, which is global in time and moreover it satisfies
$$
\|u(t)\|_{\infty}\leq Kt^{-1/(p-1)}, \qquad t>0,
$$
for some constant $K>0$.
  \item In the same notation and conditions as in Part 2, if furthermore
\begin{equation}\label{interm21}
\|u_0\|_{1;-N}:=\int_{\real^N}|x|^{-N}u_0(x)\,dx<\infty,
\end{equation}
then there exists $K_1>0$ such that $\sup\{\|u(t)\|_{1;-N}:t>0\}<K_1$.
\end{enumerate}
\end{theorem}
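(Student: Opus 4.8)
The plan is to use the Euler-type change of variables from Subsection~\ref{subsec.other}. For $\sigma_1=\sigma_2=-2$ I set $\delta=0$, $C=\theta=1$ and $z=e^y$ in \eqref{change3}, which turns the radial equation \eqref{eq1.rad} into the one-dimensional reaction--convection--diffusion equation \eqref{interm19},
\begin{equation*}
w_t=(w^m)_{yy}+(N-2)(w^m)_y+w^p,\qquad y=\ln r\in\real,
\end{equation*}
with $w(y,t)=u(e^y,t)$. The first thing I would record is the dictionary between the weighted norms in the statement and plain Lebesgue norms on the line: since $dy=dr/r$ and $dx=\omega_{N-1}r^{N-1}\,dr$ for radial functions, one has $\int_{\real^N}|x|^{-N}g(|x|)\,dx=\omega_{N-1}\int_{\real}g(e^y)\,dy$, whence $\|u_0\|_{1;-N}=\omega_{N-1}\|w_0\|_{L^1(\real)}$ and $\|u_0\|_{(p-m)/2;-N}=\omega_{N-1}^{2/(p-m)}\|w_0\|_{L^{(p-m)/2}(\real)}$. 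A quick scaling check shows that $w_\lambda(y,t)=\lambda^a w(\lambda^b y,\lambda^c t)$ leaves the principal (porous medium and reaction) part of \eqref{interm19} invariant precisely when $b/a=(p-m)/2$ and $c/a=p-1$; thus $L^{(p-m)/2}(\real)$ is the scaling critical space and $t^{-1/(p-1)}$ is the associated self-similar decay rate. This explains both the exponent $(p-m)/2$ in \eqref{interm20} and the rate in the statement, and reduces everything to a study of \eqref{interm19} on the whole line.

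For Part 1 I would argue through the evolution of the mass $M(t)=\int_\real w(y,t)\,dy=\omega_{N-1}^{-1}\|u(t)\|_{1;-N}$. Writing the first two terms of \eqref{interm19} as $\partial_y\big[(w^m)_y+(N-2)w^m\big]$ and integrating, the diffusion and convection contribute only boundary terms that vanish by the decay of $w$, so that
\begin{equation*}
M'(t)=\int_\real w^p\,dy\ \ge\ 0.
\end{equation*}
To upgrade this to a superlinear differential inequality I would control the spatial spread of the solution: by finite speed of propagation for the degenerate diffusion, the region carrying the mass has length at most $R(t)$, and H\"older's inequality on that region gives $\int_\real w^p\,dy\ge R(t)^{-(p-1)}M(t)^p$, hence $M'\ge R(t)^{-(p-1)}M^p$. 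Since $p>1$ this forces $M(t)\to\infty$ in finite time as soon as $\int^\infty R(t)^{-(p-1)}\,dt$ diverges, which is where the range $m\le p<m+1$ enters through the available bound on $R(t)$. I expect this spread estimate to be the delicate point, because the reaction enlarges the support faster than the pure porous medium flow.

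For Part 2, in the supercritical regime $p>m+2$ (equivalently $(p-m)/2>1$), I would prove small--data global existence directly for \eqref{interm19} in the critical space $L^{(p-m)/2}(\real)$. The route I favour is a fixed point / a priori estimate argument in the scale of spaces dictated by the smoothing of the porous medium flow: one propagates the smallness of $\|w_0\|_{L^{(p-m)/2}}$ to a global bound $\|w(t)\|_\infty\le Kt^{-1/(p-1)}$, the reaction being controlled as a subcritical perturbation thanks to $p>m+2$. Alternatively, and perhaps more robustly for a degenerate equation, I would construct an explicit global self-similar supersolution of \eqref{interm19} decaying at the rate $t^{-1/(p-1)}$ and invoke comparison once the data are placed below it. Undoing the change of variables then yields the stated $L^\infty$ bound for $u$.

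Finally, Part 3 follows by feeding the decay of Part 2 back into the mass identity. Since $M'(t)=\int_\real w^p\,dy$, boundedness of $\|u(t)\|_{1;-N}$ is equivalent to $\int_0^\infty\!\int_\real w^p\,dy\,dt<\infty$. Splitting $\int_\real w^p\le\|w\|_\infty^{p-1}M(t)$, the crucial point is the large--time rate: the naive bound $\|w\|_\infty^{p-1}\le K^{p-1}t^{-1}$ coming from $t^{-1/(p-1)}$ is exactly logarithmically divergent, so I would instead exploit the additional hypothesis $w_0\in L^1(\real)$ (condition \eqref{interm21}) to obtain the faster porous medium rate $\|w(t)\|_\infty\lesssim\|w\|_{L^1}^{2/(m+1)}t^{-1/(m+1)}$ via the $L^1$--$L^\infty$ smoothing. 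Then $\|w\|_\infty^{p-1}\lesssim t^{-(p-1)/(m+1)}$ with $(p-1)/(m+1)>1$ precisely because $p>m+2$, so the time integral converges at infinity (the contribution near $t=0$ being harmless for bounded data), run as a continuity/bootstrap argument in which the bound on $\|w\|_{L^1}$ and the smoothing estimate reinforce each other. I expect the main obstacle throughout to be the nonlinear convection term $(N-2)(w^m)_y$: it must be shown not to spoil the propagation speed estimate in Part 1 nor the $L^1$--$L^\infty$ smoothing in Parts 2--3, and it is ultimately responsible for the gap between the threshold $m+1$ reached in Part 1 and the value $m+2$ suggested by the one-dimensional scaling.
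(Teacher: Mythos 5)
Your reduction is exactly the paper's: for $\sigma_1=\sigma_2=-2$ the Euler change of variables $w(y,t)=u(r,t)$, $y=\ln r$, turns the radial equation into the one-dimensional reaction--convection--diffusion equation \eqref{interm19}, and your dictionary $\|u_0\|_{q;-N}=\omega_{N-1}^{1/q}\|w_0\|_{L^q(\real)}$ (from $|x|^{-N}\,dx=\omega_{N-1}\,dy$) is the same computation the paper performs. The difference is what happens next. The paper does not prove anything about Eq.~\eqref{interm19} itself: Parts 1, 2 and 3 are obtained by quoting, respectively, Theorems 1, 2 and 3 of Suzuki's work \cite{Su98} on $w_t=(w^m)_{yy}+a(w^q)_y+w^p$ (here $q=m$, $a=N-2$, dimension one), and then transporting the conclusions back. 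In particular the threshold $m+1$ in Part 1 is $\min\{m+2,\,m+1\}$, where the second entry is produced by the convection exponent $q=m$ in Suzuki's Fujita-type theorem.

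This is where your proposal has genuine gaps, because you attempt to re-derive Suzuki's three theorems and the sketches do not close. In Part 1, the mass identity $M'=\int w^p$ is fine (the convection is a perfect derivative), but the lower bound $\int w^p\ge R(t)^{-(p-1)}M^p$ requires compactly supported data, which the theorem does not assume, and above all requires an a priori upper bound on the support radius $R(t)$ for a flow with both a reaction term and a nonlinear convection term; you flag this as ``the delicate point'' but it is precisely the point where the exponent $m+1$ is decided --- your own scaling heuristic, which ignores convection, predicts $m+2$, so the argument as sketched cannot produce the stated range and leaves open whether convection helps or hurts. In Part 2 you offer two programs (a fixed point in the critical space $L^{(p-m)/2}(\real)$, or an explicit global supersolution decaying like $t^{-1/(p-1)}$) but execute neither; for a degenerate equation with convection neither is routine. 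In Part 3 the convergence of $\int_0^\infty\int w^p\,dy\,dt$ rests on an $L^1$--$L^\infty$ smoothing estimate $\|w(t)\|_\infty\lesssim\|w_0\|_1^{2/(m+1)}t^{-1/(m+1)}$ for the reaction--convection--diffusion flow, which is asserted rather than proved and is not a consequence of the pure porous medium smoothing. The clean way to complete the proof is the paper's: after your (correct) change of variables and norm identities, invoke \cite[Theorems 1--3]{Su98}, noting that the transformation acts only on the space variable and therefore preserves blow-up, global existence, and the $L^\infty$ and $L^1$ bounds.
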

\begin{proof}
\noindent \textbf{Part 1.} As discussed at the beginning of the section, if we let $w(y,t)=u(|x|,t)$, $y=\ln\,|x|$, Eq. \eqref{eq1.rad} is mapped into the reaction-convection-diffusion equation \eqref{interm19}. Since we are in the case when the exponent of the convection term is equal to $m$, we are in the framework of the study performed by Suzuki in \cite{Su98}, with $a=N-2$ and in space dimension one. We then infer from \cite[Theorem 1]{Su98} that, for $m\leq p<\min\{m+2,m+1\}=m+1$, all nontrivial solutions to Eq. \eqref{interm19} blow up in finite time. Since the transformation obviously does not affect the time behavior, we infer that the same holds true for radially symmetric solutions to Eq. \eqref{eq1.gen}, completing the proof of the first item.

\medskip

\noindent \textbf{Part 2.} Let $u_0$ be a radially symmetric function satisfying \eqref{interm20} and define $w_0(y)=u_0(r)$, $y=\ln\,r$, where, as usual, $r=|x|$. We then use an obvious change of variable to get that
\begin{equation*}
\begin{split}
\|w_0\|_{(p-m)/2}&=\left(\int_{\real}|w_0(y)|^{(p-m)/2}\,dy\right)^{2/(p-m)}=\left(\int_{\real}|u_0(r)|^{(p-m)/2}\frac{1}{r}\,dr\right)^{2/(p-m)}\\
&=\left(\int_{\real}r^{-N}|u_0(r)|^{(p-m)/2}r^{N-1}\,dr\right)^{2/(p-m)}=\|u_0\|_{(p-m)/2;-N}<\delta.
\end{split}
\end{equation*}
According to \cite[Theorem 2]{Su98}, there exists a global in time solution $w$ to Eq. \eqref{interm19} with initial condition $w_0$ and a constant $K>0$ such that
$$
\|w(t)\|_{\infty}\leq Kt^{-1/(p-1)}, \qquad {\rm for \ any} \ t>0.
$$
We reach the conclusion by defining $u(r,t)=w(y,t)$, $r=e^y$.

\medskip

\noindent \textbf{Part 3.} It follows in a similar way as Part 2, by simply noticing (with a completely similar change of variable as above) that $\|u_0\|_{1;-N}=\|w_0\|_1$ and applying the first statement of \cite[Theorem 3]{Su98} to the initial condition $w_0(y)=u_0(r)$.
\end{proof}

\noindent \textbf{Remark.} All the results in \cite{Su98} are valid when the convection coefficient is nonzero, that is, in our case, in dimensions different from $N=2$. However, when $N=2$, the convection term disappears and we are left with the classical reaction-diffusion equation where much more is known (see for example Section \ref{sec.sigmaequal}). In particular, Theorem \ref{th.Eul.large} still holds true but it can be strongly improved, for example by filling the gap $m+1\leq p<m+2$ which enters the blow-up range, by standard Fujita-type results. We do not extend this discussion here, but one can obtain in the case $N=2$ self-similar solutions, blow-up rates and much more information on radially symmetric solutions to Eq. \eqref{eq1.gen}, as we already did in Section \ref{sec.sigmaequal}.

\bigskip

We are left with the case $m=1$, $\sigma=-2$, where, as explained at the end of Subsection \ref{subsec.other}, we can apply the transformation \eqref{change4}-\eqref{change4.bis} to reach a reversed Fisher-KPP equation \eqref{eq3.eul}, where now $\sigma_2\geq-2$ is completely independent. Of course, for $\sigma_2=-2$ all the above still holds true and much more, since we are left with the standard semilinear reaction-diffusion equation. We are thus interested in the case $\sigma_2\neq-2$. In this case, we can prove the following result.
\begin{theorem}\label{th.Eul.linear}
Let $m=1$, $p>1$ and $\sigma_1=-2$, $\sigma_2>-2$. We then have
\begin{enumerate}
  \item If $\sigma_2>(N-2)(p-1)-2$, then any nontrivial radially symmetric solution to Eq. \eqref{eq1.gen} blows up in finite time.
  \item If $N>2$ and $\sigma_2<(N-2)(p-1)-2$, then there exist radially symmetric solutions to Eq. \eqref{eq1.gen} which are either self-similar in exponential form \eqref{exponential} if $\sigma_2=-2$, or having an integrable singularity at $x=0$ if $\sigma_2>-2$.
\end{enumerate}
\end{theorem}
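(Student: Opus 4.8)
Both parts rest on the transformation \eqref{change4}--\eqref{change4.bis}, which maps a (nonnegative) radially symmetric solution $u$ of \eqref{eq1.gen} with $m=1$, $\sigma_1=-2$ onto a solution $\psi$ of the reversed Fisher--KPP equation \eqref{eq3.eul}, namely $\psi_t=\psi_{\overline y\overline y}+\lambda\psi+\psi^p$ on the line, with $\lambda=-\frac{\sigma_2+2}{p-1}(N-2-\frac{\sigma_2+2}{p-1})$. Since the change of variables leaves the time variable untouched ($C=\theta=1$ and the shift $\overline y=y+Kt$ does not rescale $t$), it preserves both finite-time blow-up and global existence. Writing $a=\frac{\sigma_2+2}{p-1}>0$, one checks that $\lambda>0\iff a>N-2\iff\sigma_2>(N-2)(p-1)-2$ (Part 1), while $\lambda\le 0\iff\sigma_2\le(N-2)(p-1)-2$ (Part 2), with $\lambda=0$ exactly at $\sigma_2=-2$. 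Note also that $\sigma_2<(N-2)(p-1)-2$ is equivalent to $p>p_F(-2,\sigma_2)$ with $p_F$ from \eqref{Fujita}, so Part 2 is precisely the reversed Fisher--KPP regime flagged right after \eqref{eq3.eul}.

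\noindent\textbf{Part 1 (blow-up via an eigenfunction argument).} Here $\lambda>0$, and I would run Kaplan's method on a bounded interval $(-R,R)$ with $R>\pi/(2\sqrt\lambda)$, so that the first Dirichlet eigenvalue $\mu_1=(\pi/2R)^2$ satisfies $\mu_1<\lambda$. Let $\phi_1>0$ be the corresponding eigenfunction, set $I_0=\int_{-R}^R\phi_1$ and $F(t)=\int_{-R}^R\psi(\overline y,t)\phi_1(\overline y)\,d\overline y$. Using $\phi_1''=-\mu_1\phi_1$, the nonnegativity of $\psi$ (so that the boundary terms produced by integrating $\psi_{\overline y\overline y}$ against $\phi_1$ carry a favorable sign), and Jensen's inequality for the superlinear term, one obtains $F'(t)\ge(\lambda-\mu_1)F+I_0^{1-p}F^p\ge I_0^{1-p}F^p$. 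The strong maximum principle gives $\psi(\cdot,t_0)>0$ for some $t_0>0$, hence $F(t_0)>0$; since $p>1$, $F$ blows up in finite time, and therefore so do $\psi$ and $u$. The point is that $\lambda>0$ is exactly what lets the reaction beat $\mu_1$ on a large enough interval.

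\noindent\textbf{Part 2, case $\sigma_2>-2$ (explicit singular stationary solution).} Now $\lambda<0$, and the constant $\Lambda=|\lambda|^{1/(p-1)}=[a(N-2-a)]^{1/(p-1)}$ is an equilibrium of \eqref{eq3.eul}. Undoing the transformation (a constant $\psi$ is unaffected by the shift) yields the explicit time-independent solution $u(x,t)=\Lambda\,|x|^{-(\sigma_2+2)/(p-1)}$ of \eqref{eq1.gen}, which I would verify directly. Since $a\in(0,N-2)$ we have $a<N$, so $u\in L^1_{\mathrm{loc}}$, and the hypothesis $\sigma_2<(N-2)(p-1)-2$ is exactly what makes $|x|^{\sigma_2}u^p\sim|x|^{-(\sigma_2+2p)/(p-1)}$ locally integrable as well; thus $u$ is a genuine (very weak) global solution with the asserted integrable singularity at $x=0$.

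\noindent\textbf{Part 2, case $\sigma_2=-2$ (the main obstacle).} Here $\lambda=0$ and the previous equilibrium degenerates to $\Lambda=0$, so one must produce a genuinely time-dependent solution in exponential form \eqref{exponential}. Matching the powers of $e^{t}$ in the three terms forces $\alpha=0$, and the profile $f(\xi)$, $\xi=|x|e^{-\beta t}$, solves $\xi^2f''+(N-1+\beta)\xi f'+f^p=0$; in the variable $\eta=\ln\xi$ this becomes $g''+cg'+g^p=0$ with $c=N-2+\beta>0$, i.e.\ $\psi$ is a traveling wave of $\psi_t=\psi_{\overline y\overline y}+\psi^p$. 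I would construct it by a phase-plane/shooting analysis: there is a one-parameter family of trajectories decaying at $+\infty$ like $A e^{-c\eta}$, and the dissipation identity $\frac{d}{d\eta}\!\left[\tfrac12(g')^2+\tfrac1{p+1}g^{p+1}\right]=-c(g')^2\le0$ forbids homoclinic returns, forcing such a trajectory to a positive profile that grows as $\eta\to-\infty$, i.e.\ $f(\xi)\to\infty$ as $\xi\to0$. Mapping back produces the desired eternal solution. The hard part is controlling this backward trajectory precisely: because the pure source $+\psi^p$ has $0$ as its only, and degenerate, equilibrium, the standard monostable-front machinery does not apply at the borderline $\lambda=0$, and one must rule out finite-$\eta$ blow-up of $g$ (the friction $c>0$ should prevent it) and identify the exact, integrable singular rate of $f$ at the origin.
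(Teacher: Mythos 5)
Your reduction and Part 1 are correct and in fact more complete than the paper, which only asserts that blow-up for the double-reaction equation \eqref{eq3.eul} with positive zero-order coefficient ``is easy to show''; your Kaplan argument on an interval $(-R,R)$ with $R>\pi/(2\sqrt\lambda)$ is a legitimate way to fill that in (the boundary terms do have the right sign for $\psi\ge0$, and blow-up of $F$ forces blow-up of $u$ on a fixed compact annulus, so the conclusion transfers). For Part 2 with $\sigma_2>-2$ you take a genuinely different and simpler route: the paper invokes \cite[Theorem 1]{SB05} to get traveling waves of the reversed Fisher--KPP equation \eqref{eq3.eul} and undoes the transformations to obtain the time-dependent singular solutions \eqref{interm22}, whereas you use the constant equilibrium $\Lambda=[a(N-2-a)]^{1/(p-1)}$, which after undoing the change of variables is exactly the explicit stationary singular solution $\Lambda|x|^{-(\sigma_2+2)/(p-1)}$. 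Your computation of the exponents and of the local integrability of $u$ and of $|x|^{\sigma_2}u^p$ is right, and the statement only asks for existence of a solution with integrable singularity, so this suffices; what you lose relative to the paper is only the family of genuinely time-dependent solutions parametrized by the wave speed.

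The gap is in Part 2 with $\sigma_2=-2$. You set up the correct profile ODE $g''+cg'+g^p=0$, $c=N-2+\beta>0$, but the construction of a positive trajectory defined for all $\eta\in\real$ is left open, and the issue is not only the finite-$\eta$ blow-up you flag (that one is actually easy: $E'=-c(g')^2\ge-2cE$ gives a backward Gronwall bound on the energy, so the trajectory cannot escape in finite $\eta$). The real obstruction is positivity: following the stable manifold of the origin backward, if $g'$ ever vanishes at a point where $g>0$ then $g''=-g^p<0$ there and the trajectory subsequently crosses zero, producing a sign-changing profile. Ruling this out is precisely the minimal-wave-speed question for this monostable-type nonlinearity, and one expects it to hold only for $c$ at or above some critical speed $c^*(p)$; your sketch neither addresses this nor observes that $\beta$ (hence $c$) is a free parameter that can be taken large. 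The paper avoids the entire analysis by quoting the ready-made traveling-wave existence theorem of \cite{SB05} (which covers \eqref{eq3.eul} in both sub-cases at once); if you want a self-contained proof you must either carry out the phase-plane analysis including the speed threshold, or cite such a result as the paper does.
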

\begin{proof}
In the first case, notice that Eq. \eqref{eq3.eul} becomes an equation with double reaction, since
$$
N-2-\frac{\sigma_2+2}{p-1}=\frac{(N-2)(p-1)-\sigma_2-2}{p-1}<0.
$$
It is then easy to show that all nontrivial solutions to Eq. \eqref{eq3.eul} blow up in finite time, and the same occurs for the radially symmetric solutions to Eq. \eqref{eq1.gen}. In the second case, Eq. \eqref{eq3.eul} becomes a reversed Fisher-type equation and enters as a particular case of the more general study performed in the note \cite{SB05}, with $m=1$, $q=1$ and $p>1$ in the notation therein. Thus, \cite[Theorem 1]{SB05} entails that Eq. \eqref{eq3.eul} admits solutions in the form of traveling waves, namely
$$
\psi(\overline{y},t)=f(y+ct), \qquad c>0.
$$
By undoing first the transformation \eqref{change4.bis}, we obtain solutions to Eq. \eqref{interm3} having the form
$$
w(y,t)=\psi(y+Kt,t)=f(y+(K+c)t), \qquad K=N-2-\frac{2(\sigma_2+2)}{p-1}.
$$
We are now left with undoing the transformation \eqref{change4} for these solutions, to get
\begin{equation}\label{interm22}
u(x,t)=r^{-\delta}f(\ln\,r+(K+c)t)=|x|^{-(\sigma_2+2)/(p-1)}f(\ln(|x|e^{(K+c)t})).
\end{equation}
Notice that these solutions are self-similar in exponential form if $\sigma_2=-2$, whose profile is the composition between the traveling wave profile $f$ and the logarithmic function. If $\sigma_2>-2$, we obtain in \eqref{interm22} solutions that are singular at $x=0$, due to the fact that the traveling waves to Eq. \eqref{eq3.eul} are bounded as $y\to-\infty$, as established in \cite{SB05}. Since we are in the case when
$$
N>2+\frac{\sigma_2+2}{p-1}>\frac{\sigma_2+2}{p-1},
$$
we readily infer that the singular solutions given by \eqref{interm22} are integrable near $x=0$, as claimed.
\end{proof}

\bigskip

\noindent \textbf{Acknowledgements} R. I. and A. S. are partially supported by the Spanish project PID2020-115273GB-I00.

\bibliographystyle{plain}

\begin{thebibliography}{1}

\bibitem{AT05}
D. Andreucci and A. F. Tedeev, \emph{Universal bounds at the
blow-up time for nonlinear parabolic equations}, Adv. Differential
Equations, \textbf{10} (2005), no. 1, 89-120.




\bibitem{BG84}
P. Baras and J. Goldstein, \emph{The heat equation with a singular potential}, Trans. Amer. Math. Soc., \textbf{284} (1984), no. 1, 121-139.


\bibitem{BS19}
B. Ben Slimene, \emph{Asymptotically self-similar global solutions for Hardy-H\'enon parabolic systems}, Differ. Equ. Appl., \textbf{11} (2019), no. 4, 439-462.

\bibitem{BSTW17}
B. Ben Slimene, S. Tayachi and F. B. Weissler, \emph{Well-posedness, global existence and large time behavior for Hardy-H\'enon parabolic equations}, Nonlinear Anal., \textbf{152} (2017), 116-148.

\bibitem{CM99}
X. Cabr\'e and Y. Martel, \emph{Existence versus explosion instantan\'ee por des \'equations de la chaleur lin\'eaires avec potentiel singulier}, C. R. Acad. Sci. Paris, \textbf{329} (1999), no. 11, 973-978.

\bibitem{CK97}
C. Y. Chan and P. C. Kong, \emph{Channel flow of a viscous fluid in the boundary layer}, Q. Appl. Math., \textbf{55} (1997), no. 1, 51-56.

\bibitem{CIT21a}
N. Chikami, M. Ikeda and K. Taniguchi, \emph{Well-posedness and global dynamics for the critical Hardy-Sobolev parabolic equation}, Nonlinearity, \textbf{34} (2021), no. 11, 8094-8142.

\bibitem{CIT21b}
N. Chikami, M. Ikeda and K. Taniguchi, \emph{Optimal well-posedness and forward self-similar solution for the Hardy-H\'enon parabolic equation in critical weighted Lebesgue spaces}, Nonlinear Anal., \textbf{222} (2022), Article no. 112931, 28 pp.

%

%

\bibitem{FT00}
S. Filippas and A. Tertikas, \emph{On similarity solutions of a heat equation with a nonhomogeneous nonlinearity}, J. Differential Equations, \textbf{165} (2000), no. 2, 468-492.

\bibitem{Floater91}
M. S. Floater, \emph{Blow-up at the boundary for degenerate semilinear parabolic equations}, Arch. Ration. Mech. Anal., \textbf{114} (1991), no. 1, 57-77.

\bibitem{GV97}
V. A. Galaktionov and J. L. V\'azquez, \emph{Continuation of blowup
solutions of nonlinear heat equations in several space dimensions},
Comm. Pure Appl. Math, \textbf{50} (1997), no. 1, 1-67.

%

\bibitem{GK03}
J. A. Goldstein and I. Kombe, \emph{Nonlinear degenerate prabolic equations with singular lower-order term}, Adv. Differential Equations, \textbf{8} (2003), no. 10, 1153-1192.

\bibitem{GGK05}
G. R. Goldstein, J. A. Goldstein, and I. Kombe, \emph{Nonlinear parabolic equations with singular coefficient and critical exponent}, Appl. Anal., \textbf{84} (2005), no. 6, 571-583.

\bibitem{GLS}
J.-S. Guo, C.-S. Lin and M. Shimojo, \emph{Blow-up behavior for a
parabolic equation with spatially dependent coefficient}, Dynam.
Systems Appl., \textbf{19} (2010), no. 3-4, 415-433.

\bibitem{GS11}
J.-S. Guo and M. Shimojo, \emph{Blowing up at zero points of
potential for an initial boundary value problem}, Commun. Pure Appl.
Anal., \textbf{10} (2011), no. 1, 161-177.

\bibitem{GLS13}
J.-S. Guo, C.-S. Lin and M. Shimojo, \emph{Blow-up for a
reaction-diffusion equation with variable coefficient}, Appl. Math.
Lett., \textbf{26} (2013), no. 1, 150-153.

\bibitem{GS18}
J.-S. Guo, and P. Souplet, \emph{Excluding blowup at zero points of the potential by means of Liouville-type theorems},
J. Differential Equations, \textbf{265} (2018), no. 10, 4942-4964.

\bibitem{HT21}
K. Hisa and J. Takahashi, \emph{Optimal singularities of initial data for solvability of the Hardy parabolic equation}, J. Differential Equations, \textbf{296} (2021), 822-848.

\bibitem{ILS22}
R. G. Iagar, M. Latorre and A. S\'anchez, \emph{Blow-up patterns for a reaction-diffusion equation with weighted reaction in general dimension}, Submitted (2022), Preprint ArXiv no. 2205.09407.

\bibitem{ILS22b}
R. G. Iagar, M. Latorre and A. S\'anchez, \emph{Eternal solutions in exponential self-similar form for a quasilinear reaction-diffusion equation with critical singular potential}, Submitted (2022).

\bibitem{IL22c}
R. G. Iagar and Ph. Lauren\ced{c}ot, \emph{Non-existence of non-negative separate variable solutions to a porous medium equation with spatially dependent nonlinear source}, Bull. Sci. Math., \textbf{179} (2022), Article no. 103167, 13 pp.

\bibitem{IMS21}
R. G. Iagar, A. I. Mu\~{n}oz and A. S\'anchez, \emph{Self-similar solutions preventing finite time blow-up for reaction-diffusion equations with singular potential}, Submitted (2021), Preprint ArXiv no. 2111.04806.

\bibitem{IMS22}
R. G. Iagar, A. I. Mu\~{n}oz and A. S\'anchez, \emph{Self-similar blow-up patterns for a reaction-diffusion equation with weighted reaction in general dimension}, Comm. Pure Appl. Analysis, \textbf{21} (2022), no. 3, 891-925.

\bibitem{IRS13}
R. G. Iagar, G. Reyes and A. S\'anchez, \emph{Radial equivalence of nonhomogeneous nonlinear diffusion equations}, Acta Appl. Math., \textbf{123} (2013), no. 1, 53-72.

\bibitem{IS14}
R. G. Iagar and A. S\'{a}nchez, \emph{Large time behavior for a porous
medium equation in a nonhomogeneous medium with critical density},
Nonl. Anal, \textbf{102} (2014), 224-241.

\bibitem{IS16}
R. G. Iagar and A. S\'{a}nchez, \emph{Asymptotic behavior for the critical nonhomogeneous porous medium equation in low dimensions}, J. Math. Anal. Appl., \textbf{439} (2016), no. 2, 843-863.

\bibitem{IS19}
R. G. Iagar and A. S\'anchez, \emph{Blow up profiles for a quasilinear reaction-diffusion equation with weighted reaction with linear growth}, J. Dynam. Differential Equations, \textbf{31} (2019), no. 4, 2061-2094.

\bibitem{IS20}
R. G. Iagar and A. S\'anchez, \emph{Instantaneous and finite time blow-up of solutions to a reaction-diffusion equation with Hardy-type singular potential}, J. Math. Anal. Appl., \textbf{491} (2020), no. 1, article no. 124244, 11 pages.

\bibitem{IS21a}
R. G. Iagar and A. S\'anchez, \emph{Blow up profiles for a quasilinear reaction-diffusion equation with weighted reaction}, J. Differential Equations, \textbf{272} (2021), no. 1, 560-605.

\bibitem{IS22}
R. G. Iagar and A. S\'anchez, \emph{Separate variable blow-up patterns for a reaction-diffusion equation with critical weighted reaction}, Nonlinear Anal., \textbf{217} (2022), article no. 112740, 33 pages.

\bibitem{IS22a}
R. G. Iagar and A. S\'anchez, \emph{Eternal solutions for a reaction-diffusion equation with weighted reaction}, Discrete Contin. Dyn. Syst, \textbf{42} (2022), no. 3, 1465--1491.

\bibitem{IS22b}
R. G. Iagar and A. S\'anchez, \emph{Anomalous self-similar solutions of exponential type for the subcritical fast diffusion equation with weighted reaction}, Nonlinearity, \textbf{35} (2022), no. 7, 3385--3416.

\bibitem{IS23}
R. G. Iagar and A. S\'anchez, \emph{A special self-similar solution and existence of global solutions for a reaction-diffusion equation with Hardy potential}, J. Math. Anal. Appl, \textbf{517} (2023), no. 1, article no. 126588, 22 pages.


\bibitem{JL}
D. D. Joseph and T. S. Lundgren, \emph{Quasilinear Dirichlet problems driven by positive sources}, Arch. Rational Mech. Anal., \textbf{49} (1972/73), 241-269.

\bibitem{KRV10}
S. Kamin, G. Reyes and J. L. V\'{a}zquez, \emph{Long time behavior for
the inhomogeneous PME in a medium with rapidly decaying density},
Discrete Contin. Dyn. Syst. \textbf{26} (2010), no. 2, 521-549.

\bibitem{KR81}
S. Kamin and P. Rosenau, \emph{Propagation of thermal waves in an
inhomogeneous medium}, Comm. Pure Appl. Math, \textbf{34} (1981),
no. 6, 831-852.

\bibitem{KR82}
S. Kamin and P. Rosenau, \emph{Nonlinear thermal evolution in an
inhomogeneous medium}, J. Math. Phys., \textbf{23} (1982), no. 7,
1385-1390.


\bibitem{Ko04}
I. Kombe, \emph{Doubly nonlinear parabolic equations with singular lower order term}, Nonlinear Anal., \textbf{56} (2004), no. 2, 185-199.

\bibitem{KKMS80}
S. P. Kurdyumov, E. S. Kurkina, G. G. Malinetskii and A. A. Samarskii, \emph{Dissipative structures in an inhomogeneous nonlinear burning medium}, Dokl. Akad. Nauk. SSSR, \textbf{251} (1980), no. 3, 587-591 (Russian).

\bibitem{KK04}
S. P. Kurdyumov and E. S. Kurkina, \emph{The spectrum of the eigenfunctions of a self-similar problem for the nonlinear heat equation with a source}, Comput. Math. Math. Phys., \textbf{44} (2004), no. 9, 1539-1556.

\bibitem{Ku05a}
E. S. Kurkina, \emph{Investigating the spectrum of self-similar solutions of the nonlinear heat equation}, Comput. Math. Model, \textbf{16} (2005), no. 2, 121-150.

\bibitem{Ku05b}
E. S. Kurkina, \emph{Two-dimensional and three-dimensional thermal structures in a medium with nonlinear thermal conductivity}, Comput. Math. Model., \textbf{16} (2005), no. 3, 257-278.

\bibitem{La84}
A. A. Lacey, \emph{The form of blow-up for nonlinear parabolic equations}, Proc. Royal Society Edinburgh Sect. A, \textbf{98} (1984), no. 1-2, 183-202.

\bibitem{Le90}
L. A. Lepin, \emph{Self-similar solutions of a semilinear heat equation} (Russian), Mat. Model., \textbf{2} (1990), no. 3, 63-74.

\bibitem{LD14}
Z. Li and W. Du, \emph{Life span and secondary critical exponent for degenerate and singular parabolic equations}, Ann. Mat. Pura Appl., \textbf{193} (2014), no. 2, 501-515.

\bibitem{MT07}
A. V. Martynenko and A. F. Tedeev, \emph{Cauchy problem for a quasilinear parabolic equation with a source term and an inhomogeneous density}, Comput. Math. Math. Phys., \textbf{47} (2007), no. 2, 238-248.

\bibitem{MT08}
A. V. Martynenko and A. F. Tedeev, \emph{On the behavior of solutions to the Cauchy problem for a degenerate parabolic equation with inhomogeneous density and a source}, Comput. Math. Math. Phys., \textbf{48} (2008), no. 7, 1145-1160.

\bibitem{MTS12}
A. V. Martynenko, A. F. Tedeev and V. N. Shramenko, \emph{The Cauchy problem for a degenerate parabolic equation with inhomogeneous density and source in the class of slowly decaying initial data}, Izv. Math., \textbf{76} (2012), no. 3, 563-580.

\bibitem{MP20}
G. Meglioli and F. Punzo, \emph{Blow-up and global existence for solutions to the porous medium equation with reaction and slowly decaying density}, J. Differential Equations, \textbf{269} (2020), no. 10, 8918-8958.

\bibitem{MP21}
G. Meglioli and F. Punzo, \emph{Blow-up and global existence for solutions to the porous medium equation with reaction and fast decaying density}, Nonlinear Anal., \textbf{203} (2021), article no. 112187, 22 pages.

\bibitem{MS21}
A. Mukai and Y. Seki, \emph{Refined construction of Type II blow-up solutions for semilinear heat equations with Joseph-Lundgren supercritical nonlinearity}, Discrete Cont. Dynamical Systems, \textbf{41} (2021), no. 10, 4847-4885.

\bibitem{Ock77}
H. Ockendon and J. R. Ockendon, \emph{Variable-viscosity flows in heated and cooled channels}, J. Fluid Mech., \textbf{83} (1977), no. 1, 177-190.

\bibitem{Ock79}
H. Ockendon, \emph{Channel flow with temperature-dependent viscosity and internal viscous dissipation}, J. Fluid Mech., \textbf{93} (1979), no. 4, 737-746.

\bibitem{dPRS13}
A. de Pablo,  G. Reyes and A. S\'anchez, \emph{The Cauchy problem for a nonhomogeneous heat equation with reaction}, Discrete Cont. Dyn. Syst., \textbf{33} (2013), no. 2, 643-662.

\bibitem{dPS00}
A. de Pablo and A. S\'anchez, \emph{Global travelling waves in reaction-convection-diffusion equations}, J. Differential Equations, \textbf{165} (2000), no. 2, 377-413.

\bibitem{Pi97}
R. G. Pinsky, \emph{Existence and nonexistence of global solutions
for $u_t=\Delta u+a(x)u^p$ in $\real^d$}, J. Differential Equations,
\textbf{133} (1997), no. 1, 152-177.

\bibitem{Pi98}
R. G. Pinsky, \emph{The behavior of the life span for solutions to
$u_t=\Delta u+a(x)u^p$ in $\real^d$}, J. Differential Equations,
\textbf{147} (1998), no. 1, 30-57.

\bibitem{Qi98}
Y.-W. Qi, \emph{The critical exponents of parabolic equations and blow-up in $\real^n$}, Proc. Royal Soc. Edinburgh A, \textbf{128} (1998), 123-136.

\bibitem{QS}
P. Quittner and Ph. Souplet, \emph{Superlinear parabolic problems.
Blow-up, global existence and steady states}, Birkhauser Advanced
Texts, Birkhauser Verlag, Basel, 2007.

\bibitem{RV06}
G. Reyes, J. L. V\'{a}zquez, \emph{The Cauchy problem for the
inhomogeneous porous medium equation}, Network Heterog. Media
\textbf{1} (2006), no. 2 337-351.

\bibitem{S4}
A. A. Samarskii, V. A. Galaktionov, S. P. Kurdyumov and A. P.
Mikhailov, \emph{Blow-up in quasilinear parabolic problems}, de
Gruyter Expositions in Mathematics, \textbf{19}, W. de Gruyter,
Berlin, 1995.

\bibitem{SB05}
A. S\'anchez and B. Hern\'andez-Bermejo, \emph{New traveling wave solutions for the Fisher-KPP equation with general exponents}, Appl. Math. Lett., \textbf{18} (2005), no. 11, 1281-1285.

\bibitem{SF90}
A. M. Stuart and M. S. Floater, \emph{On the computation of blow-up}, Eur. J. Appl. Math., \textbf{1} (1990), no. 1, 47-71.

\bibitem{Su98}
R. Suzuki, \emph{Existence and nonexistence of global solutions to quasilinear parabolic equations with convection}, Hokkaido Math. Journal, \textbf{27} (1998), no. 1, 147-196.

\bibitem{Su02}
R. Suzuki, \emph{Existence and nonexistence of global solutions of
quasilinear parabolic equations}, J. Math. Soc. Japan, \textbf{54}
(2002), no. 4, 747-792.

\bibitem{T20}
S. Tayachi, \emph{Uniqueness and non-uniqueness of solutions for critical Hardy-H\'enon parabolic equations}, J. Math. Anal. Appl. \textbf{488} (2020), no. 1, paper no. 123976, 51 pages.

\bibitem{WZ06}
C. Wang and S. Zheng, \emph{Critical Fujita exponents of degenerate and singular parabolic equations}, Proc. Royal Soc. Edinburgh Sect. A Math., \textbf{136} (2006), no. 2, 415-430.

\bibitem{ZM14}
P. Zheng and C. Mu, \emph{Global existence, large time behavior, and life span for a degenerate parabolic equation with inhomogeneous density and source}, Z. Angew. Math. Phys., \textbf{65} (2014), no. 3, 471-486.

\end{thebibliography}

\end{document}